\renewcommand\@biblabel[1]{#1.}
\tikzset{
commutative diagrams/.cd,
arrow style=tikz,
diagrams={>=latex}}
\theoremstyle{definition}
\newtheorem{definition}{Definition}
\newtheorem{proposition}{Proposition}
\newtheorem{corollary}{Corollary}
\newtheorem{lemma}{Lemma}
\newtheorem{remark}{Remark}
\newtheorem{theorem}{Theorem}
\newtheorem{conjecture}{Conjecture}
\newcommand{\icg}{\mathsf{ICG}}
\newcommand{\mf}{\mathcal{F}}
\newcommand{\mop}{\mathcal{P}}
\newcommand{\hgr}{\hat{gr}}
\newcommand{\tr}{\mathfrak{tr}}
\newcommand{\sder}{\mathfrak{sder}}
\newcommand{\diva}{\nabla_1}
\newcommand{\hkrv}{\widehat{\mathfrak{krv}}}
\newcommand{\krv}{\mathfrak{krv}}
\newcommand{\krvtwo}{\mathfrak{krv}^{(2)}}
\newcommand{\krvk}{\mathfrak{krv}^{(k)}}
\newcommand{\hkrvtwo}{\widehat{\mathfrak{krv}}^{(2)}}
\newcommand{\hkrvk}{\widehat{\mathfrak{krv}}^{(k)}}
\newcommand{\grt}{\mathfrak{grt}}
\newcommand{\frakt}{\mathfrak{t}}
\newcommand{\tder}{\mathfrak{tder}}
\newcommand{\tdelta}{\tilde{\delta}}
\newcommand{\ti}{\tilde{i}}
\newcommand{\tp}{\tilde{p}}
\newcommand{\tGamma}{\tilde{\Gamma}}
\newcommand{\tgamma}{\tilde{\gamma}}
\newcommand{\hgamma}{\widehat{\gamma}}
\newcommand{\id}{\mathrm{Id}}
\newcommand{\im}{\mathrm{im}}
\newcommand{\graphs}{\mathsf{graphs}}
\newcommand{\GC}{\mathsf{GC}}
\newcommand{\Gra}{\mathsf{Gra_2}}
\newcommand{\lie}{\mathfrak{lie}}
\title{Internally connected graphs and the Kashiwara-Vergne Lie algebra}
\author{Matteo Felder}
\begin{document}

\begin{abstract}
It is conjectured that the Kashiwara-Vergne Lie algebra $\widehat{\mathfrak{krv}}_2$ is isomorphic to the direct sum of the Grothendieck-Teichm\"uller Lie algebra $\mathfrak{grt}_1$ and a one-dimensional Lie algebra. In this paper, we use the graph complex of internally connected graphs to define a nested sequence of Lie subalgebras of $\widehat{\mathfrak{krv}}_2$ whose intersection is $\mathfrak{grt}_1$, thus giving a way to interpolate between these two Lie algebras.
\end{abstract}
\keywords{Grothendieck-Teichm\"uller Lie algebra, Kashiwara-Vergne Lie algebra}
\subjclass[2010]{17B65, 81R99}
\address{%
Matteo Felder\\
Dept. of Mathematics\\
University of Geneva\\
2-4 rue du Li\`evre\\
1211 Geneva 4\\
Switzerland\\            
Matteo.Felder@unige.ch 
}

\maketitle

\section*{Introduction}

The Kashiwara-Vergne Lie algebra $\hkrv_2$ was introduced by A. Alekseev and C. Torossian in \cite{Alekseev2012}. It descibes the symmetries of the Kashiwara-Vergne problem \cite{Kashiwara1978} in Lie theory. It has been shown in \cite{Alekseev2012} to contain the Grothendieck-Teichm\"uller Lie algebra $\grt_1$ as a Lie subalgebra. Conjecturally though,
\begin{equation*}
\hkrv_2\cong \grt_1\oplus \mathbb{K}t=:\grt
\end{equation*}
where $t$ is a generator of degree 1. The aim of this work is to define a nested sequence of Lie subalgebras of $\hkrv_2$ whose intersection is $\grt$. This infinite family therefore interpolates between these two Lie algebras. Our hope is that this construction will provide the framework to a more systematic approach to tackle the conjecture. The technical tool used for this construction is the operad of internally connected graphs $\icg$ introduced by P. \v Severa and T. Willwacher in \cite{Severawillwacher2011}. Elements of $\icg(n)$ are linear combinations of (isomorphism classes of) graphs with $n$ ``external" and an arbitrary number of ``internal" vertices satisfying some connectivity condition. On these spaces, there are (among others) two natural operations. One is given by splitting internal (external) vertices into two internal (an external and an internal) vertices connected by an edge. The other splits external vertices into two external vertices. In both cases, we sum over all ways of reconnecting the ``loose" edges (see Figure \ref{figure:operations}). While the former defines a differential $d$ on $\icg(n)$, the latter, denoted by $\delta$, maps $\icg(n)$ to $\icg(n+1)$ and is therefore of a more simplicial nature. 

\begin{figure}[ht]
\centering
\begin{tikzpicture}


\draw(-0.5,0.5) -- (0,0);
\draw(0.5,0.5) -- (0,0);
\draw(-0.5,-0.5) -- (0,0);
\draw(0.5,-0.5) -- (0,0);
\draw[black,fill=black](0,0) circle (0.05cm);

\node at (1,0) {$\overset{d}{\longmapsto}$};

\draw[black,fill=black](1.5,0.25) circle (0.05cm);
\draw[black,fill=black](1.5,-0.25) circle (0.05cm);

\draw(1.5,0.25) -- (1.5,-0.25);
\draw(1.5,0.25) -- (1.25,0.5);
\draw(1.5,0.25) -- (1.75,0.5);
\draw(1.5,-0.25) -- (1.25,-0.5);
\draw(1.5,-0.25) -- (1.75,-0.5);

\node at (1.8,0) {$+$};

\draw[black,fill=black](2.2,0) circle (0.05cm);
\draw[black,fill=black](2.7,0) circle (0.05cm);

\draw(2.2,0) -- (2.7,0);
\draw(1.95,0.25) -- (2.2,0);
\draw(1.95,-0.25) -- (2.2,0);
\draw(2.95,0.25) -- (2.7,0);
\draw(2.95,-0.25) -- (2.7,0);

\node at (3.1,0) {$+$};

\draw[black,fill=black](3.4,0) circle (0.05cm);
\draw[black,fill=black](3.9,0) circle (0.05cm);

\draw(3.4,0) -- (3.9,0);
\draw(3.15,0.25) -- (3.9,0);
\draw(3.15,-0.25) -- (3.4,0);
\draw(4.15,0.25) -- (3.4,0);
\draw(4.15,-0.25) -- (3.9,0);


\draw(0.5,-1.5) -- (0.04,-1.96);
\draw(0,-1.5) -- (0,-1.96);
\draw(-0.5,-1.5) -- (-0.04,-1.96);
\draw(0,-2) circle (0.05cm);

\node at (1,-2) {$\overset{d}{\longmapsto}$};

\draw[black,fill=black](1.5,0.25) circle (0.05cm);
\draw[black,fill=black](1.5,-0.25) circle (0.05cm);

\draw(2,-2) circle (0.05cm);
\draw(3.25,-2) circle (0.05cm);
\draw(4.5,-2) circle (0.05cm);
\draw(5.75,-2) circle (0.05cm);

\draw[black,fill=black](1.75,-1.75) circle (0.05cm);
\draw[black,fill=black](3.5,-1.75) circle (0.05cm);
\draw[black,fill=black](4.75,-1.75) circle (0.05cm);

\draw(1.5,-1.5) -- (1.96,-1.96);
\draw(2.5,-1.5) -- (2.04,-1.96);
\draw(2,-1.5) -- (1.75,-1.75);

\node at (2.65,-2) {$+$};

\draw(2.75,-1.5) -- (3.21,-1.96);
\draw(3.75,-1.5) -- (3.29,-1.96);
\draw(3.25,-1.5) -- (3.5,-1.75);

\node at (3.9,-2) {$+$};

\draw(4,-1.5) -- (4.75,-1.75);
\draw(5,-1.5) -- (4.54,-1.96);
\draw(4.5,-1.5) -- (4.5,-1.95);

\node at (1.8,0) {$+$};
\node at (5.15,-2) {$+$};

\draw[black,fill=black](5.75,-1.75) circle (0.05cm);

\draw(5.75,-1.5) -- (5.75,-1.95);
\draw(5.25,-1.5) -- (5.75,-1.75);
\draw(6.25,-1.5) -- (5.75,-1.75);


\draw(0.5,-3) -- (0.04,-3.46);
\draw(0,-3) -- (0,-3.46);
\draw(-0.5,-3) -- (-0.04,-3.46);
\draw(0,-3.5) circle (0.05cm);

\node at (1,-3.5) {$\overset{\delta}{\longmapsto}$};
\node at (1.5,-3.5) {${\sum}$};

\draw(2,-3.5) circle (0.05cm);
\draw(2.5,-3.5) circle (0.05cm);

\draw(2,-3) -- (2,-3.45);
\draw(2.25,-3) -- (2.47,-3.47);
\draw(2.75,-3) -- (2.53,-3.47);

\node at (3,-3.5) {$+$};

\draw(3.5,-3.5) circle (0.05cm);
\draw(4,-3.5) circle (0.05cm);

\draw(2+2,-3) -- (2+2,-3.45);
\draw(2.25+1,-3) -- (2.47+1,-3.47);
\draw(2.75+1,-3) -- (2.53+1,-3.47);

\end{tikzpicture}
\caption{A schematic description of the operators $d$ and $\delta$. Black vertices represent ``internal", white vertices ``external" vertices. For simplicity, we omit all signs. }\label{figure:operations}

\end{figure}
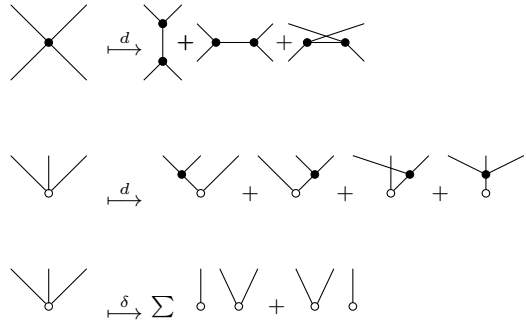

The central character throughout this story will be the equation
\begin{equation}\label{eq:TOP}
dX=\delta Y
\end{equation}
where solutions $X$ and $Y$ should lie in $\icg(n)$ and $\icg(n-1)$, respectively. Note that $\icg(n)$ is filtered by the number of the internal loops (i.e. loops that do not contain any external vertices). While the simplicial differential $\delta$ preserves this number, the differential $d$ might increase it. 

Let us now trace the connection to A. Alekseev and C. Torossian's work. It is given by the identification of internally trivalent trees in $\icg(n)$ modulo some relation with the Lie algebra of special derivations $\sder_n$ of the free Lie algebra in $n$ variables. This construction first appeared in some form in V. Drinfeld's famous paper \cite{Drinfeld1991}. Also, one-loop graphs in $\icg(n)$ modulo some relations may be identified with a subspace of the graded vector space $\tr_n$ of cyclic words in $n$ letters. Both $\hkrv_2$ and $\grt_1$ are Lie subalgebras of $\sder_2$, meaning that their description as graphs should be in terms of (equivalence classes of) trees. For this, let $x$ be the internally trivalent tree part of $X\in \icg(2)$ which solves equation \eqref{eq:TOP} for some $Y\in \icg(1)$ only up to internal loop order $1$, i.e.
\begin{equation*}
dX=\delta Y \mod 2 \text{ internal loops.}
\end{equation*}
Then the one-loop part of this equation (which only involves the tree part $x$ of $X$ on the left hand side) can be viewed as an identity in the space of cyclic words in two letters. In fact, it encodes exactly the defining relation of the Kashiwara-Vergne Lie algebra, where the differential $d$ takes the role of the ``divergence" map, $\text{div}:\sder_2\rightarrow \tr_2$, and $\delta$ corresponds to A. Alekseev and C. Torossian's simplicial operator $\tr_1\rightarrow \tr_2$. We may therefore identify $\hkrv_2$ with (equivalence classes of) internally trivalent trees which correspond to the tree part of a solution to equation \eqref{eq:TOP} up to loop order $2$. On the other hand, the Grothendieck-Teichm\"uller Lie algebra is related to graph complexes through T. Willwacher's result \cite{Willwacher2014}
\begin{equation*}
\grt_1\cong H^0(\GC_2)
\end{equation*}
where $\GC_2$ is a version of M. Kontsevich's graph complex. Surprisingly, the algorithm describing the isomorphism $H^0(\GC_2)\rightarrow \grt_1$, produces first a pair $(X,Y)\in \icg(2)\times \icg(1)$ which satisfies 
\begin{equation*}
dX=\delta Y \text{ for any loop order},
\end{equation*}
for which the tree part of $X$ eventually represents the desired $\grt_1$-element. Thus, it appears as if the Lie algebras $\grt_1$ and $\hkrv_2$ live on opposite ends of a chain described in terms of solutions to equation \eqref{eq:TOP} up to a certain loop order. More precisely, we consider solutions to the equation
\begin{equation*}
dX=\delta Y \mod k+1 \text{ internal loops}.
\end{equation*}
and set $\hkrvk_2$ to consist of the tree part of such $X$. Then, to summarize, our main result may be rephrased as follows.
\begin{theorem}
There exists a family of subspaces $\{\hkrvk_2\}_{k\in \mathbb{N}}$ of $\sder_2$ satisfying:
\begin{enumerate}
\item{For all $k\geq 1$, $\hkrvk_2$ is a Lie subalgebra of $\sder_2$}
\item{They define an infinite nested sequence between $\hkrv_2$ and $\grt_1$, that is,
\begin{equation*}
\grt_1 \subset \dots\subset \hkrv_2^{(k+1)}\subset \hkrvk_2\subset \dots \subset \hkrv_2^{(1)}=\hkrv_2.
\end{equation*}}
\item{Their intersection is $\bigcap\limits_{k\geq  1} \hkrvk_2\cong \grt_1 \oplus \mathbb{K}t$.}
\end{enumerate}
\end{theorem}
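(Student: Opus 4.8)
The plan is to transport all three assertions through the identification, recalled above, of the internally trivalent tree part of $\icg(2)$ (modulo the relevant relations) with $\sder_2$, and to phrase everything in terms of the pair $(d,\delta)$. The structural inputs I would set up first are: a graded Lie bracket on $\icg$ for which $d$ is a derivation and whose restriction to tree parts is the commutator of special derivations; the fact that $\delta$ preserves the internal loop filtration whereas $d$ can only raise it; and the way $\delta$ interacts with the bracket. With these in hand, $\hkrvk_2$ is the image under ``take the tree part'' of the set of $X\in\icg(2)$ for which there exists $Y\in\icg(1)$ with $dX=\delta Y$ modulo $k+1$ internal loops.

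\textbf{Part (1).} Linearity is immediate, since if $(X_1,Y_1)$ and $(X_2,Y_2)$ are solutions modulo $k+1$ loops then so is any linear combination, and tree parts are additive. For closure under the bracket I would apply the derivation property of $d$, which gives
\begin{equation*}
d[X_1,X_2]=[dX_1,X_2]+(-1)^{|X_1|}[X_1,dX_2]=[\delta Y_1,X_2]+(-1)^{|X_1|}[X_1,\delta Y_2]
\end{equation*}
modulo $k+1$ internal loops, and then exhibit an explicit $Y'\in\icg(1)$ with $\delta Y'$ equal to the right-hand side up to the same loop order. Since the tree part of $[X_1,X_2]$ is the bracket of the tree parts, this establishes that $\hkrvk_2$ is a Lie subalgebra. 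I expect this last step, namely showing that the bracket of a $\delta$-image with a solution is again a $\delta$-image up to the prescribed loop order, to be the main obstacle of the theorem: it is precisely here that the simplicial operator $\delta$ and the operadic bracket must be reconciled, and the argument has to be uniform in the loop filtration.

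\textbf{Part (2).} The inclusions $\hkrv_2^{(k+1)}\subseteq\hkrvk_2$ are formal, because a solution modulo $k+2$ loops is a fortiori a solution modulo $k+1$ loops. The two endpoints come from the discussion preceding the statement. For $k=1$ the equation is imposed only to one-loop order, whose tree-level content is the divergence relation defining $\hkrv_2$---with $d$ in the role of the divergence and $\delta$ in the role of the Alekseev-Torossian simplicial map---so that $\hkrv_2^{(1)}=\hkrv_2$. At the other end, Willwacher's isomorphism $\grt_1\cong H^0(\GC_2)$ supplies, for every $\grt_1$-element, a pair $(X,Y)$ with $dX=\delta Y$ for all loop orders; its tree part therefore lies in $\hkrvk_2$ for each $k$, giving $\grt_1\subseteq\hkrvk_2$ for all $k$.

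\textbf{Part (3).} The intersection $\bigcap_{k\geq 1}\hkrvk_2$ is, by construction, the space of tree parts of pairs $(X,Y)$ solving $dX=\delta Y$ exactly, with no loop-order truncation. I would identify it with $\grt_1\oplus\mathbb{K}t$ by a loop-by-loop propagation argument: the obstruction to extending an exact solution from loop order $k$ to order $k+1$ is measured by a cohomology group of $\GC_2$, and the relevant groups vanish apart from the contribution $H^0(\GC_2)\cong\grt_1$ governing the tree part and one additional class realised by the degree-$1$ generator $t$, whose associated pair solves \eqref{eq:TOP} trivially. Willwacher's explicit algorithm produces an exact solution from each $\grt_1$-element, yielding $\grt_1\oplus\mathbb{K}t\subseteq\bigcap_k\hkrvk_2$, while the vanishing shows that no further tree parts occur, giving the reverse inclusion and hence the claimed isomorphism. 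The recurring delicate point is the bookkeeping of the loop filtration ensuring that the passage from all finite loop orders to an exact solution is legitimate.
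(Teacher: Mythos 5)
Your part (2) is correct and is essentially the paper's argument: the inclusions $\hkrv_2^{(k+1)}\subset\hkrvk_2$ are formal, the identification $\hkrv_2^{(1)}=\hkrv_2$ is the translation of the one-loop equation into the divergence relation, and $\grt_1\subset\hkrvk_2$ comes from Willwacher's algorithm. The problems are in parts (1) and (3): in both you correctly locate the hard step and then leave it unresolved, and the naive routes you indicate do not go through as stated. For part (1), after writing $d[X_1,X_2]=[\delta Y_1,X_2]\pm[X_1,\delta Y_2]$ you say you would ``exhibit an explicit $Y'$ with $\delta Y'$ equal to the right-hand side.'' There is no reason such a $Y'$ should exist for arbitrary $Y_i$, and the paper does not attempt this. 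It first proves (Lemma \ref{lemma:crucial}) that the pair $(X_i,Y_i)$ can be modified so that $Y_i=(\Gamma_i)_1$ for a closed, degree-zero, $1$-vertex irreducible $\Gamma_i\in\GC_2$; this already uses $\ker(\delta:\icg(1)\to\icg(2))=C$, the isomorphism $H^0(\GC_2)\cong H^2(C,d)$, the identity $F(\Gamma)=d(\Gamma)_1-(d\Gamma)_1$, and $H^1(\icg(1),d)=0$. It then takes as extension of $[x_1,x_2]_{Ih}$ not $[X_1,X_2]$ but $\Gamma_1\bullet X_2-\Gamma_2\bullet X_1+d(X_1\wedge X_2)$ computed in $\graphs(2)$, where $\bullet$ is the $\GC_2$-action; the correction terms $\Gamma_i\bullet X_j$ are exactly what makes $dX$ land in the image of $\delta$ (via Lemma \ref{lemma:operad} and Remark \ref{remark:delta}) and what cancels the non-internally-connected pieces of $d(X_1\wedge X_2)$. ``Reconciling $\delta$ with the bracket'' is the theorem, not a step you can defer.

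For part (3) there are two gaps. First, an element of $\bigcap_k\hkrvk_2$ a priori only admits, for each $k$, some pair $(X_k,Y_k)$ solving the equation modulo $k+1$ loops, with no compatibility between different $k$; producing a single exact solution requires an argument, and the paper supplies one via an auxiliary grading preserved by both $d$ and $\delta$ which bounds the number of internal loops on each homogeneous piece (Lemma \ref{lemma:technical}). You name this as ``the recurring delicate point'' but give no mechanism. Second, your ``obstruction measured by a cohomology group of $\GC_2$'' is neither identified nor computed. The paper instead builds a map $B:(X,Y)\mapsto dY\in H^2(C,d)\cong H^0(\GC_2)\cong\grt_1$, proves it is well defined (using the quasi-isomorphism $C\hookrightarrow\bigoplus_{r}\icg(r)[1]$ with total differential $d+\delta$), surjective (via the algorithm of Theorem \ref{Thm:inclusion}), and has kernel $\frakt_2=\mathbb{K}t^{1,2}$ (using $H^0(\icg(2),d)\cong\frakt_2$ and $H^1(\icg(1),d)=0$). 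Without a substitute for these computations the identification $\bigcap_k\hkrvk_2\cong\grt_1\oplus\mathbb{K}t$ remains unproved.
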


The proofs of (1) and (3) are non-trivial and require several results from the theory of graph complexes. Additionally, we recall a similar construction which stems from the work of P. \v Severa and T. Willwacher \cite{Severawillwacher2011} for the kernel of the divergence map, $\ker (\text{div}:\sder_n\rightarrow \tr_n)=:\krv_n$ (which is also referred to as the Kashiwara-Vergne Lie algebra). More precisely, we show that there exists a nested sequence of Lie subalgebras $\{\krvk_n\}_{k\geq 1}$ of $\sder_n$ extending the notion of the Lie algebra $\krv_n$. In this instance, the intersection of these Lie subalgebras is the Drinfeld-Kohno Lie algebra $\frakt_n$.

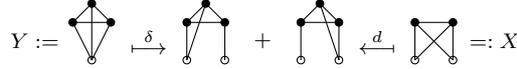
\begin{figure}[ht]
\centering
\begin{tikzpicture}

\node at (-0.75,0.25) {$Y:=$};

\draw(-0.25,0.5) -- (0,0);
\draw(0.25,0.5) -- (0,0);
\draw(0,0.75) -- (0,0);
\draw(0.25,0.5) -- (-0.25,0.5);
\draw(-0.25,0.5) -- (0,0.75);
\draw(0.25,0.5) -- (0,0.75);

\draw(0,0) circle (0.05cm);
\draw[black,fill=black](-0.25,0.5) circle (0.05cm);
\draw[black,fill=black](0,0.75) circle (0.05cm);
\draw[black,fill=black](0.25,0.5) circle (0.05cm);

\node at (0.75,0.25) {$\overset{\delta}{\longmapsto}$};

\draw(1.25,0.5) -- (1.25,0);
\draw(1.75,0.5) -- (1.75,0);
\draw(1.5,0.75) -- (1.25,0);
\draw(1.75,0.5) -- (1.25,0.5);
\draw(1.25,0.5) -- (1.5,0.75);
\draw(1.75,0.5) -- (1.5,0.75);

\draw(1.25,0) circle (0.05cm);
\draw(1.75,0) circle (0.05cm);
\draw[black,fill=black](1.25,0.5) circle (0.05cm);
\draw[black,fill=black](1.5,0.75) circle (0.05cm);
\draw[black,fill=black](1.75,0.5) circle (0.05cm);

\node at (2.25,0.25) {$+$};

\draw(2.75,0.5) -- (2.75,0);
\draw(3.25,0.5) -- (3.25,0);
\draw(3,0.75) -- (3.25,0);
\draw(3.25,0.5) -- (2.75,0.5);
\draw(2.75,0.5) -- (3,0.75);
\draw(3.25,0.5) -- (3,0.75);

\draw(2.75,0) circle (0.05cm);
\draw(3.25,0) circle (0.05cm);
\draw[black,fill=black](2.75,0.5) circle (0.05cm);
\draw[black,fill=black](3,0.75) circle (0.05cm);
\draw[black,fill=black](3.25,0.5) circle (0.05cm);

\node at (3.75,0.25) {$\overset{d}{\longmapsfrom}$};

\draw(4.25,0.5) -- (4.25,0);
\draw(4.75,0.5) -- (4.75,0);
\draw(4.75,0.5) -- (4.25,0);
\draw(4.25,0.5) -- (4.75,0);
\draw(4.25,0.5) -- (4.75,0.5);

\draw(4.25,0) circle (0.05cm);
\draw(4.75,0) circle (0.05cm);
\draw[black,fill=black](4.25,0.5) circle (0.05cm);
\draw[black,fill=black](4.75,0.5) circle (0.05cm);

\node at (5.3,0.25) {$=:X$};

\end{tikzpicture}
\caption{A pair $(X,Y)\in \icg(2)\times \icg(1)$ solving equation \eqref{eq:TOP} in loop order $1$. }\label{figure:sol}

\end{figure}

\section*{Acknowledgements} 
I am highly indebted and very thankful to my advisor Thomas Willwacher for patiently explaining most of the material presented in this text to me. I am grateful to Anton Alekseev for his constant support and numerous useful discussions. I would also like to thank Ricardo Campos, Florian Naef and Elise Raphael for several fruitful exchanges. Moreover, I thank the referees for many helpful comments. This work was supported by the grant MODFLAT of the European Research Council (ERC).

\section{Preliminaries: Results from homotopy theory}\label{section:preliminaries}
In this first section, we recall some well-known facts from homotopy theory. Throughout the text, we work over a field $\mathbb{K}$ of characteristic zero. 
\begin{definition}
Let $f$ and $g$ be chain maps between two chain complexes $(V,d_V)$ and $(W,d_W)$. A \emph{homotopy} between $f$ and $g$ is a map $h:V\rightarrow W$ of degree $-1$ such that
\begin{equation*}
f-g=d_Wh+hd_V
\end{equation*}
We say $f$ is homotopic to $g$.
\end{definition}

\begin{definition}
A \emph{homotopy retract} consists of the following data:
\begin{itemize}
\item{two chain complexes $(W,d_W)$ and $(V,d_V)$,}
\item{chain maps
\begin{align*}
i:(W,d_W)&\overset{\sim}{\longrightarrow} (V,d_V)\\
p:(V,d_V)&\longrightarrow (W,d_W)
\end{align*}
where $i$ is a quasi-isomorphism,}
\item{a homotopy $h$ between $\id$ and $ip$.}
\end{itemize}
Sometimes, it is more convenient to say $(W,d_W)$ is a homotopy retract of $(V,d_V)$.
\end{definition}

\begin{proposition}\label{prop:quasi-iso}
Let $(V,d)$ denote a differential graded vector space. If $\pi:V \rightarrow V$ is a projection ($\pi^2=\pi$) and $h:V \rightarrow V$ is a map of degree $-1$ such that $\id-\pi=dh+hd$ (i.e. $h$ is a homotopy between $\id$ and $\pi$), then $\pi(V)\hookrightarrow V$ is a quasi-isomorphism.
\end{proposition}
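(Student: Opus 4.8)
The plan is to realise the inclusion $\iota\colon \pi(V)\hookrightarrow V$ as one half of a homotopy equivalence, the other half being the corestriction of $\pi$ itself, and then to invoke the standard fact that homotopic chain maps induce the same map on homology. The one genuinely delicate point — and the step I expect to be the main obstacle — is that $\pi$ being a chain map is \emph{not} among the hypotheses; it has to be deduced from the homotopy relation, and it is exactly this that makes $\im\pi$ a subcomplex in the first place. Everything else is formal bookkeeping.

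I would start by establishing that $\pi$ commutes with $d$. Substituting $\pi=\id-(dh+hd)$ and computing $d\pi-\pi d$, the hypothesis $d^2=0$ kills the terms $d\,dh$ and $hd\,d$, leaving $d\pi=d-dhd=\pi d$. Hence $d$ restricts to a differential on $\pi(V)=\im\pi$, the inclusion $\iota$ is a chain map, and the corestriction $p\colon V\to \pi(V)$, $v\mapsto \pi(v)$, is a chain map as well.

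Next I would record the two compositions. Idempotency $\pi^2=\pi$ gives $p\circ\iota=\id_{\pi(V)}$ on the nose, since $\pi$ fixes its own image. In the other order, the hypothesis reads $\id_V-\iota\circ p=\id_V-\pi=dh+hd$, so $\iota\circ p$ is homotopic to $\id_V$ via $h$. Thus $\iota$ and $p$ are mutually inverse up to homotopy.

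Finally I would invoke the elementary fact that homotopic chain maps agree on homology: if $f-g=dh+hd$ and $z$ is a cycle, then $f(z)-g(z)=d(hz)$ is a boundary, so $[f(z)]=[g(z)]$. Applying this to the two composites yields $H(p)\circ H(\iota)=\id$ and $H(\iota)\circ H(p)=\id$, whence $H(\iota)$ is an isomorphism with inverse $H(p)$, i.e.\ $\iota\colon \pi(V)\hookrightarrow V$ is a quasi-isomorphism. As noted, the entire content sits in the verification that $\pi$ is a chain map; the remainder is the standard argument that a deformation retract of complexes is a quasi-isomorphism.
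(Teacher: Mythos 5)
Your proof is correct and follows essentially the same route as the paper: exhibit the corestriction of $\pi$ as a homotopy inverse to the inclusion and use that homotopic chain maps agree on cohomology. Your explicit verification that $d\pi=\pi d$ (so that $\pi(V)$ is actually a subcomplex) is a point the paper leaves implicit, but it does not change the argument.
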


\begin{proof}
Denote the inclusion map by $i:\pi(V)\hookrightarrow V$. Notice that
\begin{align*}
\pi i =&\id|_{\pi(V)}\\
i \pi=& \pi.
\end{align*}
Moreover, since $\pi$ is homotopic to the identity $\id_V$, the induced maps on cohomology coincide, i.e. $\pi^{*}=\id_V^{*}=\id_{H(V)}$. Also
\begin{align*}
i^{*}:H(\pi(V))&\rightarrow H(V)\\
\pi^{*}: H(V)&\rightarrow H(\pi(V))
\end{align*}
satisfy $i^{*}\pi^{*}=\pi^{*}=\id_{H(V)}$, $\pi^{*} i^{*}=\id_{H(\pi(V))}$. Thus $i^{*}$ is an isomorphism.
\end{proof}

\begin{proposition}
Let $(V,d)$ be as above. There exist graded subspaces $H$, $U$, $U' \subset V$ such that $d(H)=0$, $H\cong H(V,d)$, $d$ restricted to $U$ is an isomorphism onto $U'$, i.e. $d:U \overset{\sim}{\rightarrow} U'$ and $V$ decomposes as $V=H\oplus U \oplus U'$. 
\end{proposition}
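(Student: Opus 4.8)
The plan is to build the decomposition by iterated choice of complements, using the fact that over a field every graded subspace admits a homogeneous (degree-by-degree) complement. Write $Z := \ker d$ for the cycles and $U' := \im d$ for the boundaries. Since $d^2 = 0$ we have $U' \subseteq Z$, and both are graded subspaces of $V$ because $d$ is a homogeneous map.

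First I would split off the non-cycles. Choose a graded complement $U$ to $Z$ in $V$, so that $V = Z \oplus U$. I claim $d$ restricts to an isomorphism $U \xrightarrow{\sim} U'$: the kernel of $d|_U$ is $U \cap \ker d = U \cap Z = 0$, so $d|_U$ is injective, while $d(U) = d(Z \oplus U) = d(V) = \im d = U'$ since $d$ vanishes on $Z$. Hence $d : U \xrightarrow{\sim} U'$, which in particular exhibits $U'$ as the image of $U$ under $d$.

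Next I would extract the harmonic part inside the cycles. As $U' \subseteq Z$, choose a graded complement $H$ to $U'$ in $Z$, so $Z = H \oplus U'$. Then $d(H) = 0$ because $H \subseteq Z$, and the composite $H \hookrightarrow Z \twoheadrightarrow Z/U' = \ker d/\im d = H(V,d)$ is an isomorphism, giving $H \cong H(V,d)$. Assembling the two splittings yields $V = Z \oplus U = (H \oplus U') \oplus U = H \oplus U \oplus U'$, which is the asserted decomposition with all the required properties.

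The argument is essentially routine linear algebra, so I expect no deep obstacle; the one point deserving care — and the step I would treat most explicitly — is that every complement can be chosen to be graded, i.e. homogeneous. This is legitimate precisely because $d$ is homogeneous, so that $Z$ and $U'$ are graded subspaces, and one may then pick the complements separately in each degree, each degree being an ordinary vector space over $\mathbb{K}$ where complements always exist.
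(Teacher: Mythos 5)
Your proposal is correct and follows essentially the same route as the paper: choose a graded complement $U$ of $Z=\ker d$, set $U'=\im d=dU$, and choose a graded complement $H$ of $U'$ in $Z$. The only (harmless) cosmetic difference is that you define $U'$ as $\im d$ from the outset rather than as $dU$, and you make the gradedness of the complements explicit, which the paper leaves implicit.
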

\begin{proof}
Let $Z:=\{v\in V| dv=0\}$ be the subset of closed elements. Let $U\subset V$ be some complement of $Z$ in $V$, so that $V=Z\oplus U$. Define $U':=dU\subset Z$ and let $H\subset Z$ be some complement of $U'$ in $Z$, so that $Z=H\oplus U'$. Then $V=H\oplus U \oplus U'$. By construction, $dH=dU'=0$ and $d|_U:U\rightarrow U'$ is surjective. Since $U\cap Z=\{0\}$, it is also injective. Clearly, $H\cong H(V,d)$ as graded vector spaces.
\end{proof}

\begin{corollary}\label{cor:existence}
Let $(V,d)$ be as above. Then there exist a projection $\pi$ and a homotopy $h$ between $\id$ and $\pi$ (i.e. $\id-\pi=dh+hd$) satisfying
\begin{equation}\label{eq:properties}
d \pi=\pi d =0 \text{ and }h^2=\pi h =h \pi=0.
\end{equation}
For every such $\pi$ and $h$, we have $\pi(V)\cong H(V,d)$ as graded vector spaces.
\end{corollary}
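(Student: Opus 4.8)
The plan is to construct $\pi$ and $h$ explicitly from the splitting $V = H\oplus U \oplus U'$ furnished by the preceding proposition, verify the relations in \eqref{eq:properties} by a summand-by-summand computation, and then deduce the final isomorphism statement from Proposition \ref{prop:quasi-iso}. First I would fix the decomposition with $dH=0$, $dU'=0$, and $d|_U\colon U\overset{\sim}{\rightarrow} U'$ an isomorphism. Writing a general element as $v=v_H+v_U+v_{U'}$, I define $\pi$ to be the projection onto $H$ along $U\oplus U'$, i.e. $\pi(v):=v_H$, and I define $h$ to vanish on $H\oplus U$ and to act on $U'$ as the inverse of $d|_U$, i.e. $h(v_{U'}):=(d|_U)^{-1}(v_{U'})\in U$. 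Since $d|_U$ is a degree-one isomorphism onto $U'$, its inverse is a degree $(-1)$ map $U'\to U$, so $h$ has degree $-1$ as required, and $\pi^2=\pi$ is immediate.

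Next I would check the relations. The homotopy identity $\id-\pi=dh+hd$ is verified on each summand: on $H$ both sides vanish; on $U$ one has $hd=\id$ and $dh=0$; on $U'$ one has $dh=\id$ and $hd=0$; in each case the right-hand side reproduces $\id-\pi$. The remaining relations follow just as directly from the definitions: $d\pi=0$ because $\pi$ lands in $H$ and $dH=0$; $\pi d=0$ because $d$ lands in $U'$ and $\pi$ annihilates $U'$; and $h^2=\pi h=h\pi=0$ because $h$ lands in $U$ while $h$ and $\pi$ both annihilate $U$ (and $h$ annihilates $H=\pi(V)$). None of these computations is subtle; they are forced by the way the three pieces of the decomposition interact with $d$.

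Finally, for the claim that \emph{every} pair $(\pi,h)$ subject to \eqref{eq:properties} satisfies $\pi(V)\cong H(V,d)$, I would argue not from the explicit construction but from the hypotheses alone. Since $\pi$ is a projection and $h$ is a homotopy between $\id$ and $\pi$, Proposition \ref{prop:quasi-iso} shows that the inclusion $\pi(V)\hookrightarrow V$ is a quasi-isomorphism. Moreover $d\pi=0$ forces $d$ to vanish on $\pi(V)$, so $\pi(V)$ is a subcomplex carrying the zero differential and is therefore equal to its own cohomology; composing with the quasi-isomorphism yields $\pi(V)\cong H(V,d)$. The one point requiring care—and the only genuine obstacle in an otherwise routine argument—is precisely here: one must resist using the particular $\pi$ built above and instead reason purely from the abstract properties \eqref{eq:properties}, so that the conclusion holds for \emph{every} admissible pair and not merely for the constructed one.
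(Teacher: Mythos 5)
Your construction of $\pi$ and $h$ from the splitting $V=H\oplus U\oplus U'$ and the summand-by-summand verification of \eqref{eq:properties} is exactly what the paper does, and it is correct. Where you diverge is in the final claim that \emph{every} pair satisfying \eqref{eq:properties} gives $\pi(V)\cong H(V,d)$: the paper argues directly, setting $W:=\ker(\pi)\cap\ker(d)$ and showing $W=\im(d)$ via the homotopy relation (if $w\in W$ then $w=(\id-\pi)(w)=dhw$), so that $\ker(d)=\pi(V)\oplus\im(d)$; you instead invoke Proposition \ref{prop:quasi-iso} to get that $\pi(V)\hookrightarrow V$ is a quasi-isomorphism and observe that $d\pi=0$ makes the differential on $\pi(V)$ vanish, so $\pi(V)=H(\pi(V))\cong H(V,d)$. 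Both arguments are sound and use only the abstract properties, as required; yours is slightly shorter and in fact needs only $d\pi=0$ together with the homotopy relation (not $\pi d=0$ or the $h$-relations), while the paper's version has the side benefit of exhibiting the explicit decomposition $\ker(d)=\pi(V)\oplus\im(d)$, which makes the isomorphism concrete.
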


\begin{proof}
We have $V=H\oplus U\oplus U'$, with $d|_U:U\rightarrow U'$ an isomorphism, and $dH=dU'=0$. Let $\pi$ be the projection onto $H$ and $h:U'\rightarrow U$ be an inverse for $d|_U$, i.e. $d h|_{U'}= \id$. Extend $h$ to $H$ and $U$ by $0$. Note that this way, $h:V\rightarrow V$ is a right inverse to $d:V\rightarrow V$. All requested relations are now easily checked.

Given such $\pi$ and $h$, the relation $d\pi=0$ implies $\pi(V)\subset \ker(d)$. Let 
\begin{equation*}
W:=\ker(\pi)\cap \ker(d)=\{v\in V| dv=\pi v=0\}.
\end{equation*}
Then
\begin{equation*}
\ker(d)=\pi(V)\oplus W.
\end{equation*}
We claim that $W=\im(d)$. Let $v=du\in \im(d)$. Then, $dv=d^2u=0=d\pi(v)$, i.e. $v\in W$. On the other hand, if $w\in W$, then $(\id-\pi)(w)=w=(dh+hd)(w)=dhw\in \im(d)$. Now $\ker(d)=\pi(V)\oplus \im(d)$ implies $\pi(V)\cong H(V,d)$.
\end{proof}

Suppose $(V,d)$ is a complex and $\{V_n\}_{n\in \mathbb{Z}}$ a family of subsets of $V$ such that $V\cong \prod\limits_{n\in \mathbb{Z}}{V_n}$ as graded vector spaces. Assume that the differential decomposes as $d=d_0+d_1+d_2+\dots$ with $d_j:V_n\rightarrow V_{n+j}$ for all $n$. Note that $V$ is bigraded. The degree within the complex will be denoted by a superscript. Moreover, suppose that the $V_n$ are bounded in this degree, that is for every degree $j$, there is an $\tilde{n}(j)$ such that $V_n^j=0$ for all $n<\tilde{n}(j)$. With this setting, we have a bounded above, complete and descending filtration $\mf^p V:= \prod\limits_{n\geq p}{V_n}$. Note that as complexes, the completed associated graded complex $\hat{gr}V$ with differential $d_0$ is isomorphic to $(V,d_0)$, i.e. $(V,d_0)\cong (\hgr V,d_0)$.

\begin{proposition}\label{prop:homotopy}
Suppose $(V,d)$ is a complex as above. Let $\pi_0:V\rightarrow V$ be a projection (i.e. $\pi_0^2=\pi_0)$) and $h_0$ be a homotopy between $\id$ and $\pi_0$ for $d_0$ (i.e. $\id-\pi_0=d_0h_0+h_0d_0$) such that 
\begin{align*}
d_0\pi_0=&\pi_0h_0=0\\
h_0^2=&\pi_0h_0=h_0\pi_0=0.
\end{align*}
Then 
\begin{equation*}
h:=h_0-h_0d'h_0+h_0d'h_0d'h_0-\dots=h_0\cdot \frac{1}{1+d'h_0}=\frac{1}{1+h_0d'}\cdot h_0
\end{equation*}
and 
\begin{equation*}
\pi:=\id-(dh+hd)
\end{equation*}
where $d'=d-d_0$ satisfy
\begin{enumerate}[label=(\roman*)]
\item{$\pi^2=\pi$}
\item{$d \pi=\pi d$}
\item{$h^2=0$}
\item{$h \pi=\pi h=0$}
\item{$\id-\pi=dh+hd$}
\end{enumerate}
\end{proposition}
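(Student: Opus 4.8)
The plan is to dispatch the five relations in order of increasing difficulty, after first guaranteeing that $h$ and $\pi$ are well defined. Before anything else I would verify that the geometric series $h=h_0\sum_{k\ge 0}(-d'h_0)^k$ converges in the complete filtered space $V$. Since $d'=d_1+d_2+\cdots$ strictly raises the index $n$ (each $d_j$ with $j\ge 1$ maps $V_n\to V_{n+j}$), while $h_0$ preserves it (we may take it to be the contracting homotopy for $d_0$ produced degreewise by Corollary \ref{cor:existence}, hence of $n$-degree $0$), the composite $d'h_0$ maps $V_n$ into $\prod_{m>n}V_m$. Fixing the complex degree $j$ and invoking the hypothesis that $V_n^j=0$ for $n<\tilde n(j)$, only finitely many powers $(d'h_0)^k$ contribute to each fixed bidegree, so the sum converges; the mirror series gives the second expression, and the identity $h_0(1+d'h_0)=(1+h_0d')h_0$ shows the two forms of $h$ agree.

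Next I would clear away the relations that cost nothing. Relation (v) holds by the very definition $\pi:=\id-(dh+hd)$. For (ii) I expand using only $d^2=0$: one finds $d\pi=d-d^2h-dhd=d-dhd$ and $\pi d=d-dhd-hd^2=d-dhd$, so $d\pi=\pi d$. For (iii) I exploit the two factorizations simultaneously, writing $h^2=(1+h_0d')^{-1}h_0\cdot h_0(1+d'h_0)^{-1}=(1+h_0d')^{-1}h_0^2(1+d'h_0)^{-1}=0$ by the side condition $h_0^2=0$.

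The remaining relations (i) and (iv) I would reduce to a single identity. Setting $P:=dh+hd$, a short computation gives $P^2=dhdh+hdhd$ (the cross terms drop out because $h^2=0$ and $d^2=0$), and likewise $\pi h=h-hdh$ and $h\pi=h-hdh$ (using $h^2=0$). Consequently both $\pi^2=\pi$, equivalently $P^2=P$, and $h\pi=\pi h=0$ follow immediately once one knows
\[
hdh=h .
\]
Establishing this is the crux of the argument, and the step I expect to be the main obstacle.

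The hard part, then, is $hdh=h$, and my approach is to sandwich. From $\id-\pi_0=d_0h_0+h_0d_0$ together with $h_0\pi_0=0$ and $h_0^2=0$ one extracts $h_0=h_0d_0h_0$, whence $h_0\,d\,h_0=h_0+h_0d'h_0$. Using $h=(1+h_0d')^{-1}h_0=h_0(1+d'h_0)^{-1}$ I would write $hdh=(1+h_0d')^{-1}(h_0\,d\,h_0)(1+d'h_0)^{-1}$, substitute the sandwich value, and collapse the two resulting terms by means of $(1+d'h_0)^{-1}=\id-d'h$ (so that $h_0(1+d'h_0)^{-1}=h$) together with $(1+h_0d')^{-1}h_0=h$; this yields $hdh=(h-hd'h)+hd'h=h$. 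With $hdh=h$ in hand, the reductions of the previous paragraph close out (i) and (iv). I expect the only fiddly part to be the sign bookkeeping and the precise recursions $h=h_0-h_0d'h$ and $h=h_0-hd'h_0$; the genuine content is the sandwich relation $h_0d_0h_0=h_0$ and the observation that everything collapses onto the identity $hdh=h$.
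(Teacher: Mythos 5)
Your proposal is correct and follows essentially the same route as the paper: relations (ii), (iii), (v) are dispatched directly, and (i) and (iv) are reduced to the single identity $hdh=h$, which in turn rests on $h_0d_0h_0=h_0$. The paper merely asserts that "a cumbersome computation" yields $hdh=h$; your sandwich argument via the two factorizations $h=(1+h_0d')^{-1}h_0=h_0(1+d'h_0)^{-1}$ supplies that computation cleanly (and your convergence check for the geometric series is a welcome point the paper leaves implicit).
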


\begin{proof}
By definition, we have $\id-\pi=dh+hd$ and since $h_0^2=0$, it clearly follows that $h^2=0$. Moreover,
\begin{equation*}
d\pi=d(\id-dh-hd)=d-dhd=(\id-dh-hd)d=\pi d.
\end{equation*}
Using $h_0d_0 h_0= h_0(\id-\pi_0-h_0d_0)=h_0$, a cumbersome computation shows $hdh=h$.
Hence,
\begin{equation*}
\pi h=(\id-dh-hd)h=h-hdh=0=h(id-dh-hd)=h\pi,
\end{equation*}
and as $(dh+hd)^2=dhdh+hdhd=dh+hd$,
we find $(\id-\pi)^2=\id-\pi \Leftrightarrow \pi^2=\pi.$
\end{proof}

\begin{corollary}\label{Cor:isom}
Let $(V,d)$ and $\pi$ be as a in the proposition above. Then $(\pi(V),d)$ is a quasi-isomorphic subcomplex of $(V,d)$. Moreover, as a graded vector space, $\pi(V)$ is isomorphic to $H^{\bullet}(V,d_0)\cong H^{\bullet}(\hat{gr}V,d_0)$.
\end{corollary}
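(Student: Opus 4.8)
The plan is to treat the two assertions separately, feeding the homotopy data of Proposition~\ref{prop:homotopy} into Proposition~\ref{prop:quasi-iso} for the quasi-isomorphism and running a filtration argument for the graded identification; throughout I take $\pi_0$ and $h_0$ to preserve the grading $\{V_n\}$, as is natural since $d_0$ does. For the first assertion, property~(ii) gives $d\pi=\pi d$, so $\pi$ is a chain map and $dv=\pi(dw)\in\pi(V)$ for $v=\pi w$; thus $d$ restricts to $\pi(V)$, making it a subcomplex. Since $\pi$ is a projection by~(i) and $h$ is a homotopy between $\id$ and $\pi$ for $d$ by~(v), I would apply Proposition~\ref{prop:quasi-iso} with $\pi$ and $h$ in the roles of the projection and homotopy there, concluding that $\pi(V)\hookrightarrow V$ is a quasi-isomorphism.

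For the graded identification I would first apply Corollary~\ref{cor:existence} to $(V,d_0)$—whose hypotheses are exactly the relations imposed on $\pi_0,h_0$ in Proposition~\ref{prop:homotopy}—to get $\pi_0(V)\cong H^{\bullet}(V,d_0)$, while $H^{\bullet}(V,d_0)\cong H^{\bullet}(\hgr V,d_0)$ is the remark preceding that proposition. It then suffices to construct a degree-preserving isomorphism $\pi_0(V)\cong\pi(V)$. Writing $d=d_0+d'$ and $h=h_0+h'$ with $h'=-h_0d'h_0+h_0d'h_0d'h_0-\dots$, I would expand
\begin{equation*}
\pi-\pi_0=-(dh-d_0h_0)-(hd-h_0d_0)=-(d_0h'+d'h_0+d'h')-(h_0d'+h'd_0+h'd').
\end{equation*}
Each summand contains a factor $d'$ or $h'$, and since these map $\mf^pV$ into $\mf^{p+1}V$ while $d_0,h_0,\pi_0$ preserve the filtration, the operator $N:=\pi-\pi_0$ satisfies $N(\mf^pV)\subset\mf^{p+1}V$ (in particular $\pi$ is filtration-preserving). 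With $f:=\pi|_{\pi_0(V)}$ and $g:=\pi_0|_{\pi(V)}$, the idempotency of $\pi$ and $\pi_0$ gives
\begin{align*}
g\circ f&=\pi_0\pi|_{\pi_0(V)}=\id_{\pi_0(V)}+\pi_0 N|_{\pi_0(V)},\\
f\circ g&=\pi\pi_0|_{\pi(V)}=\id_{\pi(V)}-\pi N|_{\pi(V)},
\end{align*}
so both composites have the form $\id+(\text{filtration-raising})$.

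The step I expect to be the main obstacle is proving that an operator $\id+M$ with $M(\mf^pV)\subset\mf^{p+1}V$ is invertible. For this I would use that $V\cong\prod_nV_n$ is complete for $\mf^{\bullet}$ together with the boundedness hypothesis $V_n^j=0$ for $n<\tilde n(j)$: in a fixed cohomological degree every element lies in some $\mf^{\tilde n(j)}V$, and as $M^k$ lands in $\mf^{p+k}V$, the Neumann series $\sum_{k\geq0}(-M)^k$ contributes only finitely many terms to each graded piece $V_n^j$, hence converges to a two-sided inverse. It follows that $g\circ f$ and $f\circ g$ are isomorphisms, so $f$ is bijective; being of degree $0$, it is the desired graded isomorphism. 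Chaining the identifications gives $\pi(V)\cong\pi_0(V)\cong H^{\bullet}(V,d_0)\cong H^{\bullet}(\hgr V,d_0)$, completing the proof.
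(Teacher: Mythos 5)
Your argument is correct, and for the substantive part of the corollary it takes a genuinely different route from the paper. The paper also reduces to showing that $\pi_0|_{\pi(V)}$ and $\pi|_{\pi_0(V)}$ are inverse to one another, but it does so by proving the \emph{exact} algebraic identities $\pi_0\pi\pi_0=\pi_0$ and $\pi\pi_0\pi=\pi$, the first from $\pi_0 h=h\pi_0=0$, the second via the auxiliary identities $h_0dh=h_0d_0h-h+h_0$ and $hdh_0=hd_0h_0-h+h_0$ followed by a ``lengthy algebraic manipulation'' that is not written out. You instead show only that the two composites are unipotent, i.e.\ of the form $\id+M$ with $M(\mf^pV)\subset\mf^{p+1}V$, and invert them by a Neumann series using completeness of the filtration and the boundedness hypothesis $V_n^j=0$ for $n<\tilde n(j)$. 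What your approach buys is that it replaces the unverified algebraic computation by a short, structural argument (the standard ``identity plus filtration-raising perturbation is invertible'' lemma), and it makes transparent \emph{why} the statement holds; what it costs is the extra hypothesis that $\pi_0$ and $h_0$ preserve the grading $\{V_n\}$, which the paper's purely algebraic identities do not formally require. That hypothesis is harmless here: it holds in the intended applications (where $\pi_0,h_0$ are built degree-by-degree for $d_0$), and some such non-decrease condition on $h_0$ is already implicitly needed for the series defining $h$ in Proposition~\ref{prop:homotopy} to converge. Two small points worth making explicit if you write this up: the Neumann series is applied on the subspaces $\pi_0(V)$ and $\pi(V)$, so you should note that these are closed in the filtration topology (each is the kernel of the filtration-preserving operator $\id-\pi_0$, resp.\ $\id-\pi$), so the limit stays inside; and the conclusion that $f$ is bijective from ``$g\circ f$ and $f\circ g$ are isomorphisms'' deserves the one-line remark that the first gives injectivity and the second surjectivity.
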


\begin{proof}
That $(\pi(V),d)\hookrightarrow (V,d)$ is a quasi-isomorphism follows directly from Proposition \ref{prop:quasi-iso}. From Corollary \ref{cor:existence}, we get that $\pi_0(V)\cong H^{\bullet}(V,d_0)$ as graded vector spaces. To prove that $H^{\bullet}(V,d_0)\cong \pi(V)$ as graded vector spaces, we show that 
\begin{equation*}
\pi_0|_{\pi(V)}: \pi(V)\leftrightarrows \pi_0(V) :\pi|_{\pi_0(V)}
\end{equation*}
are mutual inverses.
Note that as $\pi_0 h=h \pi_0=0$, $\pi_0\pi\pi_0=\pi_0(\id-dh-hd)\pi_0=\pi_0^2=\pi_0$ and therefore $\pi_0\pi|_{\pi_0(V)}=\id_{\pi_0(V)}$.
The other direction is more technical. First of all, note that $h_0h=hh_0=0$ (as $h_0^2=0$) and $d_0h_0d_0=(\id-\pi_0-h_0d_0)d_0=d_0$. Also, a somewhat tedious, but elementary calculation shows
\begin{align*}
h_0dh =&h_0d_0h-h+h_0\\
hdh_0=&hd_0h_0-h+h_0.
\end{align*}
Using these identities, a lengthy algebraic manipulation produces the desired result, $\pi \pi_0 \pi=\pi$.

\end{proof}

\begin{lemma}\label{lemma:G-action}
Let $G$ be a finite group acting on a chain complex $(V,d)$ (i.e. the action commutes with the differential). Then there exists a projection $\pi$ and a homotopy $h$ between $\id$ and $\pi$ which satisfy the equations \eqref{eq:properties} as in Corollary \ref{cor:existence} and commute with the action of $G$. Moreover, for every such $\pi$ and $h$, we have $\pi(V)\cong H(V,d)$ as graded $G$-vector spaces.
\end{lemma}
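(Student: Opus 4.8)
The plan is to reduce Lemma~\ref{lemma:G-action} to the non-equivariant statement already proved in Corollary~\ref{cor:existence} by an averaging argument, exploiting that $G$ is finite and $\mathbb{K}$ has characteristic zero so that $\frac{1}{|G|}$ makes sense. First I would invoke Corollary~\ref{cor:existence} to obtain \emph{some} projection $\pi$ and homotopy $h$ satisfying the relations \eqref{eq:properties}, without any compatibility with the $G$-action. The goal is then to symmetrize these data. Since the $G$-action commutes with $d$, for each $g\in G$ the conjugated maps $\pi_g:=g\pi g^{-1}$ and $h_g:=g h g^{-1}$ again satisfy all the relations in \eqref{eq:properties}: conjugation by $g$ is a chain automorphism, so it preserves $d\pi=\pi d=0$, $h^2=\pi h=h\pi=0$, and the homotopy relation $\id-\pi=dh+hd$ (here one uses $g\,\id\,g^{-1}=\id$ and that $g$ commutes with $d$).

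The naive averages $\bar{\pi}:=\frac{1}{|G|}\sum_{g\in G}\pi_g$ and $\bar{h}:=\frac{1}{|G|}\sum_{g\in G}h_g$ are manifestly $G$-equivariant and still satisfy the linear relations $d\bar\pi=\bar\pi d=0$ and $\id-\bar\pi=d\bar h+\bar h d$, because these are linear in the data and each conjugate satisfies them. The difficulty is that the \emph{quadratic} relations $\bar\pi^2=\bar\pi$, $\bar h^2=0$, $\bar\pi\bar h=\bar h\bar\pi=0$ are not preserved by naive averaging, since cross terms $\pi_g\pi_{g'}$ need not vanish or combine correctly. This is the main obstacle, and it is exactly the issue that Proposition~\ref{prop:homotopy} is designed to circumvent.

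To fix this I would instead apply the machinery in the spirit of Proposition~\ref{prop:homotopy} and its corollary: take $\pi_0:=\bar\pi$ and $h_0:=\bar h$ as equivariant data satisfying the needed linear identities together with $d_0:=d$, $d':=0$ in that framework, which immediately returns genuine projection and homotopy data. More concretely, the cleaner route is to first average only the homotopy to get an equivariant $\bar h$ with $\id-\bar\pi=d\bar h+\bar h d$ where $\bar\pi:=\id-(d\bar h+\bar h d)$ is automatically $G$-equivariant and satisfies $d\bar\pi=\bar\pi d=0$; then I would correct $\bar h$ by the standard homological-perturbation normalization $h':=\bar h\,d\,\bar h$ (the ``gauge fixing'' used to impose the side conditions), and set $\pi':=\id-(dh'+h'd)$. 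One checks, using $d\bar\pi=\bar\pi d=0$, that $h'$ and $\pi'$ satisfy \emph{all} the relations in \eqref{eq:properties}, are still $G$-equivariant (as $G$ commutes with $d$ and with $\bar h$), and define the required projection and homotopy.

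Finally, the isomorphism $\pi(V)\cong H(V,d)$ as graded $G$-modules follows from the non-equivariant statement in Corollary~\ref{cor:existence} together with equivariance of the constructed $\pi$: the inclusion $\pi(V)\hookrightarrow V$ is a $G$-equivariant quasi-isomorphism by Proposition~\ref{prop:quasi-iso}, and since all maps involved intertwine the $G$-action, the induced isomorphism on cohomology is an isomorphism of graded $G$-representations. I expect the only genuinely delicate point to be verifying the quadratic side conditions after the gauge-fixing correction $h'=\bar h d\bar h$; this is the step where the earlier propositions do the heavy lifting, so in the write-up I would state it as a routine check referencing Corollary~\ref{cor:existence} and Proposition~\ref{prop:homotopy} rather than recomputing it by hand.
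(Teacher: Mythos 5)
Your overall strategy (average over $G$, using that $\operatorname{char}\mathbb{K}=0$) is the right one and is in the same spirit as the paper, but the specific averaging you perform breaks the argument, and neither of your proposed repairs restores it. Writing $h_g=g h g^{-1}$ and $\bar h=\frac{1}{|G|}\sum_{g}h_g$, one has $\bar h^2=\frac{1}{|G|^2}\sum_{g,g'}h_gh_{g'}$, and the off-diagonal terms $h_gh_{g'}$ with $g\neq g'$ have no reason to vanish or cancel. Concretely, setting $\bar\pi=\id-d\bar h-\bar h d$ and using $d\bar h d=d$ (which does hold), one computes $\bar\pi^2=\bar\pi+d\bar h^2d$, so $\bar\pi$ is not even a projection; and for your corrected $h'=\bar h d\bar h$ one gets $h'dh'=h'$ but $(h')^2=\bar h d\,\bar h^2\,d\bar h$, $(\pi')^2-\pi'=d\bar h^2d$ and $\pi'h'=-d\bar h^2d\bar h$, none of which vanish in general. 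Your first suggested repair is empty: Proposition~\ref{prop:homotopy} with $d'=0$ returns $h=h_0$ and $\pi=\pi_0$ unchanged, and in any case its \emph{hypotheses} are exactly the quadratic relations in \eqref{eq:properties} that you are trying to establish. The genuine fix (the homological-perturbation normalization of side conditions) needs an honest idempotent as input and proceeds in two stages, first the annihilation conditions and only then $h\mapsto hdh$; deferring this as ``a routine check'' referencing the earlier propositions hides the actual content of the lemma.

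The paper sidesteps the problem by never averaging anything quadratic. It works with the decomposition $V=H\oplus U\oplus U'$ of Corollary~\ref{cor:existence}, chosen $G$-invariantly (possible since $\ker d$ and $\operatorname{im} d$ are $G$-subspaces and complements can be averaged), so that the projection $\pi$ onto $H$ is $G$-equivariant from the outset; it then averages only the right inverse $h_0$ of $d$, i.e.\ a solution of the \emph{linear} equation $dh_0=\id$ on $U'$. The average of conjugates of a solution of a linear $G$-invariant equation is again a solution, and it still maps $U'$ into the $G$-invariant complement $U$ and vanishes on $H\oplus U$, which is exactly what makes all of \eqref{eq:properties} survive the averaging. (The same linear averaging is applied to a right inverse $i_0$ of $\pi$ to get the $G$-module isomorphism.) Your final paragraph, deducing $\pi(V)\cong H(V,d)$ as graded $G$-modules once an equivariant $\pi$ and $h$ satisfying \eqref{eq:properties} are in hand, is fine; the gap lies entirely in the construction of those maps.
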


\begin{proof}
We need to adapt the proof of Corollary \ref{cor:existence} slightly. We have $V=H\oplus U\oplus U'$, with $d|_U:U\rightarrow U'$ an isomorphism, and $dH=dU'=0$. Let $\pi$ be the projection onto $H$. This is a $G$-equivariant map. Let $h_0:V\rightarrow V$ be any right inverse to $d$, i.e. $dh_0=\id$. To construct an $G$-equivariant map out of $h_0$, define
\begin{equation*}
h:=\frac{1}{|G|}\sum\limits_{g\in G}{g h_0 g^{-1}}.
\end{equation*}
This is still a right inverse to $d$ as
\begin{equation*}
dh=\frac{1}{|G|}\sum\limits_{g\in G}{g dh_0 g^{-1}}=\frac{1}{|G|}\sum\limits_{g\in G}{g \id g^{-1}}=\frac{|G|}{|G|}\id=\id
\end{equation*}
and it is $G$-equivariant. Let $k\in G$, then
\begin{equation*}
k. h= \frac{1}{|G|}\sum\limits_{g\in G}{kg h_0 g^{-1}}=\frac{1}{|G|}\sum\limits_{g':=kg \in G}{g' h_0 (k^{-1} g')^{-1}}=\frac{1}{|G|}\sum\limits_{g' \in G}{g' h_0 g'^{-1} k}=h k.
\end{equation*}
To show that $\pi(V)\cong H(V,d)$ as $G$-vector spaces, we need to find a $G$-equivariant right inverse $i:H\rightarrow V$ to $\pi:V\rightarrow H\cong H(V,d)$. For this, let $i_0$ be any right inverse to $\pi$ (which exists as $\pi$ is surjective). By the same averaging trick as above, we define
\begin{equation*}
i:=\frac{1}{|G|}\sum\limits_{g\in G}{g i_0 g^{-1}}.
\end{equation*}
That $\pi i=\id$ and $i$ is $G$-invariant is checked in exactly the same way as for $h$. The map $i$ can then be extended to $U\oplus U'$ by $0$, thus giving a $G$-equivariant inverse to $\pi$ and $\pi(V)\cong H(V,d)$ as graded $G$-vector spaces.
\end{proof}

The following homotopy transfer theorem for $L_\infty$-algebras can be found in chapter 10.3. of J.-L. Loday and B. Vallette's book \cite{lodayvallette2012}. Another good introductory survey is B. Vallette's text \cite{vallette2014}.
\begin{theorem}\label{thm:homotopytransfer}
(\cite{lodayvallette2012}, Theorem 10.3.5) Let $(W,d_W)$ be a homotopy retract of $(V,d_V)$ with maps $i:W\overset{\sim}{\rightarrow}V$, $p:V\rightarrow W$ and homotopy $h:V\rightarrow V$. Moreover, let $\{l_n:V^{\otimes}\rightarrow V\}_{n\geq 2}$ be an $L_\infty$-structure on $V$. This $L_\infty$-structure can be transferred to an $L_\infty$-structure on $W$ such that $i$ extends to an $L_\infty$-quasi-isomorphism. The transferred structure $\{m_n:W^{\otimes}\rightarrow W\}_{n\geq 2}$ is given by
\begin{equation}\label{eq:transferredstructure}
m_n:=\sum\limits_{t\in RT_n}\pm p t(l,h) i^{\otimes n}
\end{equation}
where the sum runs over rooted trees $t$ with $n$ leaves and where the notation $t(l, h)$ stands for the $n$-multilinear operation on $V$ defined by the composition scheme $t$ with vertices labeled by the $l_k$ and internal edges labeled by $h$.
\end{theorem}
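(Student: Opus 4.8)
The plan is to recast the statement in the standard cofree-coalgebra formalism and then invoke the Homological Perturbation Lemma (HPL), which is how the result is established in \cite{lodayvallette2012}. Recall that an $L_\infty$-structure $\{l_n\}$ on $V$ is the same datum as a square-zero coderivation $D_V$ on the cofree conilpotent cocommutative coalgebra $S^c(V[1])$ on a shift of $V$; the bracket $l_n$ is recovered by corestricting $D_V$ to the cogenerators $V[1]$. I would split $D_V = \partial + D'$, where $\partial$ is the coderivation induced by the linear part $l_1 = d_V$ and $D'$ collects the contributions of the higher brackets $l_n$, $n \geq 2$. Since each such $l_n$ strictly decreases the number of tensor factors, $D'$ is a \emph{perturbation} in the sense that it lowers the word-length filtration on $S^c(V[1])$.

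First I would promote the homotopy retract $(i,p,h)$ to a homotopy retract between $(S^c(V[1]),\partial)$ and $(S^c(W[1]),\partial_W)$. The maps $I := S^c(i[1])$ and $P := S^c(p[1])$ are the cofree (functorial) extensions and are automatically morphisms of coalgebras. The genuinely delicate object is the homotopy $H$ on $S^c(V[1])$: it cannot be taken to be the naive symmetrization of $h^{\otimes n}$ and must instead be built by the ``tensor trick'', placing $h$ on one tensor slot, $ip$ to its left and the identity to its right, and then symmetrizing. Verifying that this $H$, together with $I,P$, satisfies the retract identity $\mathrm{id} - IP = \partial H + H \partial$ relies crucially on the side conditions $h^2 = 0$, $hi = 0$, $ph = 0$, which by Corollary \ref{cor:existence} may always be assumed.

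With this retract in place I would apply the HPL to the perturbation $D'$. Local finiteness---that is, convergence of the resolvent $(\mathrm{id} - H D')^{-1} = \sum_{k\geq 0}(H D')^k$---is guaranteed because $D'$ strictly lowers word-length while $H$ preserves it, so only finitely many terms contribute on each filtration step. The Lemma then outputs a perturbed differential
\begin{equation*}
D_W := \partial_W + P\, D'\, (\mathrm{id} - H D')^{-1}\, I
\end{equation*}
on $S^c(W[1])$ together with a perturbed map $I_\infty := I + H D'(\mathrm{id} - HD')^{-1}I$. The main point to check is that $D_W$ is again a \emph{coderivation}, not merely a square-zero differential: this is where the coalgebra-morphism property of $I,P$ and the compatibility of the tensor-trick $H$ with the coproduct enter, and it is the step I expect to be the true obstacle. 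Granting it, $D_W$ defines the transferred $L_\infty$-structure $\{m_n\}$ on $W$, while $I_\infty$, being a morphism of dg coalgebras whose linear part is $i[1]$, is a filtered quasi-isomorphism, hence the asserted $L_\infty$-quasi-isomorphism extending $i$.

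Finally, to recover the explicit formula \eqref{eq:transferredstructure}, I would corestrict $D_W$ to the cogenerators $W[1]$ and expand the resolvent. Each factor of $D'$ inserts a bracket $l_k$, each factor of $H$ inserts the homotopy $h$ on the edge where it acts, the outermost $I$ contributes $i^{\otimes n}$ on the leaves and $P$ contributes $p$ at the root; keeping track of the ways the coproduct distributes the inputs organizes these compositions precisely as a sum over rooted trees $t \in RT_n$, reproducing $m_n = \sum_{t} \pm\, p\, t(l,h)\, i^{\otimes n}$. The remaining labor is purely a matter of tracking Koszul signs through the shift isomorphisms, which I would defer to the sign conventions of \cite{lodayvallette2012}.
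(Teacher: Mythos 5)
The paper does not prove this statement at all: it is imported verbatim from Loday--Vallette (\cite{lodayvallette2012}, Theorem 10.3.5) as a black box, so there is no in-paper argument to compare yours against. Your sketch is the standard proof via the cofree-coalgebra reformulation and the Homological Perturbation Lemma, and as an outline it is essentially correct: the decomposition $D_V=\partial+D'$, the word-length filtration argument for convergence of the resolvent, the tensor-trick homotopy $H$, and the expansion of the corestricted perturbed differential into a sum over rooted trees are all the right ingredients, and you correctly identify the genuinely delicate point, namely that the HPL a priori produces only a square-zero map and one must separately verify that $D_W$ is a coderivation and that $I_\infty$ is a coalgebra morphism (this is where the specific form of the tensor-trick $H$ is used, not just the retract identity). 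Two small caveats. First, the side conditions $h^2=0$, $hi=0$, $ph=0$ are not part of the homotopy retract data in the theorem as stated, and Corollary \ref{cor:existence} of this paper only produces them for a projection on a single complex $(V,d)$, not for a retract between two different complexes $W$ and $V$; you should instead invoke the standard (and easy) fact that any homotopy retract can be modified to satisfy the side conditions without changing $i$, $p$, or the homotopy class of $h$. Second, Loday--Vallette's own proof does not route through the HPL in the form you describe: they establish the tree-wise formulas directly by exhibiting an explicit morphism of cooperads (equivalently, a twisting morphism on the bar side) and checking the Maurer--Cartan/coderivation identities combinatorially. Your HPL route buys a more conceptual convergence argument and a uniform treatment of $I_\infty$, at the cost of the coderivation verification; their route buys the explicit formula \eqref{eq:transferredstructure} with signs for free. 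Either is acceptable here since the paper only uses the statement, not any particular proof.
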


\begin{remark}\label{thm:Kontsevich-Soibelman}(\cite{lodayvallette2012}, Theorems 10.3.11. and 10.3.15) Both maps $i$ and $p$ may be extended to $L_\infty$-morphisms $\ti=(i,i_2,i_3,\dots)$ and $\tp=(p,p_2,p_3,\dots)$ between the $L_\infty$-algebras $(V,d_V,\{l_n\}_{n\geq 2})$ and $(W,d_W,\{m_n\}_{n\geq 2})$. The higher arity maps $\{i_n\}_{n\geq 2}$ and $\{p_n\}_{n\geq 2}$ are constructed using composition schemes involving only $i$, $p$, $h$ and $\{l_n\}_{n\geq 2}$. For instance, from \cite{vallette2014}
\begin{equation*}
i_n:=\sum\limits_{t\in RT_n}\pm h t(l,h) i^{\otimes n}
\end{equation*}
where the notation is as in equation \eqref{eq:transferredstructure}.
\end{remark}



\section{The $L_\infty$-algebra of internally connected graphs}

We follow P. Severa and T. Willwacher's work \cite{Severawillwacher2011}. In their paper, we learn that the tools to define the $L_\infty$-algebra of internally connected graphs are based on M. Kontsevich's graph complex which can be found in \cite{Kontsevich1999} and \cite{Lambrechts2014}. Fix $n\geq 1$.
\begin{definition}\label{def:icg}
An \emph{admissible graph} is an unoriented graph $\Gamma$ with labeled vertices $1,2,\dots, n$ (called external), possibly other vertices (unlabeled and called internal) satisfying the following properties:
\begin{enumerate}
\item{There is a linear order on the set of edges.}
\item{$\Gamma$ has no double edges, nor simple loops (edges connecting a vertex with itself).}
\item{Every internal vertex is at least trivalent.}
\item{Every internal vertex can be connected by a path with an external vertex.}
\end{enumerate}
\end{definition}
Let $\graphs(n)$ be the vector space spanned by finite linear combinations of admissible graphs with $n$ external vertices, modulo the relation $\Gamma^\sigma=(-1)^{|\sigma|} \Gamma$, where $\Gamma^{\sigma}$ differs from $\Gamma$ by a permutation $\sigma$ on the order of edges. Define the degree by
\begin{equation*}
\deg\Gamma=\#\text{edges} - 2\#\text{internal vertices}
\end{equation*}
and let the differential be given by vertex splitting. More precisely, an external vertex splits into an external and an internal vertex connected by an edge, and we sum over all possible ways of reconnecting the ``loose'' edges to the two newly created vertices, while only keeping admissible graphs. Similarly, an internal vertex splits into two internal vertices, before summing over all ways of reconnecting the edges previously connected to the splitted vertex.
\begin{definition}
A graph in $\graphs(n)$ which is connected after we cut off all external vertices is called \emph{internally connected}. Denote by $\icg(n)$ the space spanned by internally connected graphs modulo sign relations obtained from the order of edges. Define the grading on $\icg(n)$ to be
\begin{equation*}
\deg\Gamma=1-\#\text{edges}+2\#\text{internal vertices}.
\end{equation*}
Set the differential on $\icg(n)$ to be given by vertex splitting.
\end{definition}

Since any graph in $\graphs(n)$ may be written as the disjoint union of its internally connected components (after identifying the external vertices), the internally connected graphs freely generate $\graphs(n)$ as a coalgebra. We therefore have an isomorphism of cocommutative coalgebras
\begin{equation*}
\graphs(n)\cong S(\icg(n)[1]).
\end{equation*}
By definition, the differential on $\graphs(n)$ defines the following $L_\infty$-structure on the graded vector space $\icg(n)$. The $k$-ary bracket $[\Gamma_1,\dots,\Gamma_k]$ is given by gluing the $\Gamma_i$'s at the corresponding external vertices, applying the differential in $\graphs(n)$, and keeping only the graphs that are internally connected (we thus necessarily split only external vertices, and only in ways that connect all $\Gamma_i$'s together).

Finally, note that both $\graphs$ and $\icg$ form operads in the category of cochain complexes. The operadic composition in $\graphs$ (and also in $\icg$) is given by insertion. That is, for $\Gamma_1\in \graphs(r)$, $\Gamma_2\in \graphs(s)$,
\begin{equation*}
\Gamma_1 \circ_j \Gamma_2
\end{equation*}
is constructed by replacing the $j$th external vertex by $\Gamma_2$, summing over all possible ways of reconnecting the ``loose" edges (which were previously adjacent to vertex $j$) to vertices of $\Gamma_2$, and keeping only admissible graphs (in the case of $\icg$, we only keep the internally connected ones).



\subsection{A natural filtration on $\icg(n)$}
On $\icg(n)$, there is a natural descending filtration given by the number of internal loops (loops that do not contain any external vertices). For $p\in \mathbb{N}_0$, we denote by $\mf^p:=\mf^p\icg(n)$ the subspace of $\icg(n)$ having \emph{at least} $p$ internal loops. Clearly,
$$\dots \subset\mf^{p+1}\subset\mf^{p}\subset\dots \subset \mf^0=\icg(n)$$
The completed associated graded with respect to this filtration is
$$\hgr\icg(n)=\prod_{p\geq0}\mf^p/\mf^{p+1}.$$
Remark that the $p$-th piece of the associated graded, $\mf^p/\mf^{p+1}$, is the space of graphs having \emph{exactly} $p$ internal loops.
Also note that the differential $d$ on $\icg(n)$ can be decomposed into a sum $d=d_0+d_1+d_2+\dots$ where by applying $d_i$ the vertex splitting produces $i$ new internal loops. Note that thus $d_0$ splits internal vertices only. All others components come from splitting external vertices.

\begin{remark}
Occasionally, we drop the word internal. It should be noted that by loops we always mean internal loops.
\end{remark}

\begin{proposition}
There exists a projection $\pi:\icg(n)\rightarrow \icg(n)$ and a homotopy $h$ between $\id$ and $\pi$ which satisfy the equations (i) to (v) as in Proposition \ref{prop:homotopy} and are such that $(\im(\pi),d)$ is a quasi-isomorphic subcomplex of $(\icg(n),d)$. Moreover, $\im(\pi)\cong H^{\bullet}(\hgr\icg(n),d_0)$ as graded vector spaces. 
\end{proposition}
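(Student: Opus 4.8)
The plan is to recognize this statement as a direct application of the abstract machinery assembled in Corollary \ref{cor:existence}, Proposition \ref{prop:homotopy}, and Corollary \ref{Cor:isom}, with the internal-loop filtration playing the role of the filtration in the paragraph preceding Proposition \ref{prop:homotopy}. First I would verify that $(\icg(n),d)$ slots into that abstract framework. Setting $V_p:=\mf^p/\mf^{p+1}$ for $p\ge 0$ (and $V_p=0$ for $p<0$), the differential decomposes as $d=d_0+d_1+d_2+\cdots$ with $d_j$ raising the internal loop number by exactly $j$, precisely as noted just before the statement. The required boundedness condition, that for each complex-degree $j$ there be an $\tilde n(j)$ with $V_p^j=0$ for $p<\tilde n(j)$, holds trivially with $\tilde n(j)=0$ since the internal loop number is non-negative. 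Hence the filtration $\mf^\bullet$ is descending, bounded above, and complete; I would stress that in a fixed degree the space is a genuine product $\prod_{p\ge 0}V_p$ rather than a finite sum, because fixing the degree $1-\#\text{edges}+2\#\text{internal vertices}$ still allows arbitrarily large internal loop number (each extra loop is absorbed by extra internal vertices of the correct valence).

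Next I would check that $d_0$ squares to zero, so that $(\icg(n),d_0)$ is a bona fide complex: $d_0$ is the loop-number-preserving component of the square-zero differential $d$, so $d_0^2$ is the loop-preserving part of $d^2=0$, whence $d_0^2=0$. Applying Corollary \ref{cor:existence} to $(\icg(n),d_0)$ then produces a projection $\pi_0$ and a homotopy $h_0$ between $\id$ and $\pi_0$ satisfying $d_0\pi_0=\pi_0 d_0=0$ and $h_0^2=\pi_0 h_0=h_0\pi_0=0$, together with the identification $\pi_0(\icg(n))\cong H^\bullet(\icg(n),d_0)$ as graded vector spaces. These are exactly the data required as input to Proposition \ref{prop:homotopy}.

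With $\pi_0,h_0$ in hand I would feed them into Proposition \ref{prop:homotopy} to obtain $h$ and $\pi=\id-(dh+hd)$ satisfying properties (i)--(v). I expect the main point requiring genuine care to be the convergence of the defining series
\[
h=h_0-h_0d'h_0+h_0d'h_0d'h_0-\cdots,\qquad d'=d-d_0 .
\]
This is exactly where completeness of the filtration is indispensable: since $d'=d_1+d_2+\cdots$ strictly raises the internal loop number, the $k$-th term $h_0(d'h_0)^k$ raises the filtration degree by at least $k$, so in each fixed complex-degree only finitely many terms contribute modulo $\mf^{p+1}$ for any $p$, and the series converges in the complete space $\icg(n)$, defining $h$ (and hence $\pi$) unambiguously. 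Everything else in Proposition \ref{prop:homotopy} is then formal.

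Finally I would invoke Corollary \ref{Cor:isom} for $(\icg(n),d)$ with this $\pi$: it yields at once that $(\pi(\icg(n)),d)\hookrightarrow(\icg(n),d)$ is a quasi-isomorphism and that $\pi(\icg(n))\cong H^\bullet(\icg(n),d_0)$ as graded vector spaces. Combining this with the isomorphism of complexes $(\icg(n),d_0)\cong(\hgr\icg(n),d_0)$ recorded before Proposition \ref{prop:homotopy} gives $\im(\pi)\cong H^\bullet(\hgr\icg(n),d_0)$, completing the proof. In short, the only non-formal ingredient is the convergence argument driven by the loop filtration; the remainder is a bookkeeping verification that $\icg(n)$ fits the already-established abstract setup.
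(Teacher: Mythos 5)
Your proposal is correct and follows exactly the paper's route: apply Corollary \ref{cor:existence} to $(\icg(n),d_0)$ to obtain $\pi_0$ and $h_0$, feed these into Proposition \ref{prop:homotopy} to get $\pi$ and $h$, and conclude via Corollary \ref{Cor:isom}. The extra verifications you include (completeness of the loop filtration, $d_0^2=0$, convergence of the perturbation series) are exactly the hypotheses the paper leaves implicit, so this is the same proof with more detail.
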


\begin{proof}
Denote by $\pi_0:\icg(n)\rightarrow \icg(n)$ the projection onto $H^\bullet(\icg(n),d_0)\cong H^\bullet(\hgr\icg(n),d_0)$, and by $h_0$ a homotopy between $\id$ and $\pi_0$ for $d_0$. These exist by Lemma \ref{cor:existence}. Then Proposition \ref{prop:homotopy} ensures the existence of $\pi$ and the rest of the statement is an immediate consequence of Corollary \ref{Cor:isom}.
\end{proof}

\begin{proposition}
On $H^{\bullet}(\hgr\icg(n),d_0)$, one can define a differential $\nabla$ in such a way that the complex $(H^{\bullet}(\hgr\icg(n),d_0),\nabla)$ is quasi-isomorphic to $(\icg(n),d)$.
\end{proposition}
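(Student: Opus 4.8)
The plan is to transport the differential $d$ from the quasi-isomorphic subcomplex $(\im(\pi),d)$ furnished by the previous proposition across the graded vector space isomorphism $\im(\pi)\cong H^{\bullet}(\hgr\icg(n),d_0)$, and then to chase quasi-isomorphisms. The construction is essentially forced once the earlier results are in hand, so the whole content of the proof is bookkeeping.

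First I would fix a projection $\pi$ and homotopy $h$ satisfying properties (i)–(v) of Proposition \ref{prop:homotopy}, as produced by the previous proposition. The key observation is that property (ii), $d\pi=\pi d$, guarantees that $d$ preserves $\im(\pi)$: if $v=\pi w$, then $dv=d\pi w=\pi d w\in\im(\pi)$. Since $d^2=0$, the pair $(\im(\pi),d)$ is genuinely a subcomplex, and by the previous proposition the inclusion $\iota:(\im(\pi),d)\hookrightarrow(\icg(n),d)$ is a quasi-isomorphism.

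Next, let $\Psi:H^{\bullet}(\hgr\icg(n),d_0)\xrightarrow{\sim}\im(\pi)$ denote the isomorphism of graded vector spaces supplied by Corollary \ref{Cor:isom}; explicitly, it is the composite of the identification $H^{\bullet}(\hgr\icg(n),d_0)\cong\im(\pi_0)$ coming from Corollary \ref{cor:existence} with the mutually inverse maps $\pi_0|_{\im(\pi)}$ and $\pi|_{\im(\pi_0)}$. I then \emph{define} $\nabla:=\Psi^{-1}\circ d|_{\im(\pi)}\circ\Psi$. We have $\nabla^2=\Psi^{-1}d^2\Psi=0$, so $\nabla$ is a differential, and by construction $\Psi$ intertwines $\nabla$ and $d|_{\im(\pi)}$, hence is an isomorphism of complexes $(H^{\bullet}(\hgr\icg(n),d_0),\nabla)\cong(\im(\pi),d)$. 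Composing, $\iota\circ\Psi$ is a quasi-isomorphism from $(H^{\bullet}(\hgr\icg(n),d_0),\nabla)$ to $(\icg(n),d)$, which is exactly the claim. (Equivalently, one could obtain $\nabla$ directly from the homological perturbation lemma applied to the contraction onto $d_0$-cohomology with perturbation $d'=d-d_0$; this is the statement that Proposition \ref{prop:homotopy} really encodes, but the subcomplex route above is shorter given the tools already assembled.)

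There is no serious obstacle here: the only points requiring care are the verification that $d$ restricts to $\im(\pi)$, which is handled by $d\pi=\pi d$, and that transport of structure yields a bona fide differential, which is immediate. It is worth recording the qualitative shape of the outcome, since it is what the rest of the paper exploits: writing $\nabla=\divv+\diva+\divb+\dots$ according to the filtration by internal loops, the component $\divv$ induced by $d_0$ vanishes because we have passed to $d_0$-cohomology, so that $\nabla$ consists purely of the higher differentials governed by $d_1,d_2,\dots$, i.e. the successive differentials of the associated spectral sequence.
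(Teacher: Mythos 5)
Your proposal is correct and follows essentially the same route as the paper: both transport $d$ from the quasi-isomorphic subcomplex $(\im(\pi),d)$ across the graded isomorphism with $H^{\bullet}(\hgr\icg(n),d_0)$ by conjugation and then compose with the inclusion quasi-isomorphism. Your added checks (that $d\pi=\pi d$ makes $\im(\pi)$ a subcomplex, and that $\nabla$ has no $\nabla_0$-component) are correct and match remarks the paper records separately.
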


\begin{proof}
Denote the isomorphism of graded vector spaces relating $H^{\bullet}(\hgr\icg(n),d_0)$ to $\im(\pi)$ by $\Phi$,
\begin{equation*}
\Phi:H^{\bullet}(\hgr\icg(n),d_0)\overset{\cong}{\longrightarrow} \im(\pi)
\end{equation*}
To turn this into an isomorphism of chain complexes, we define a differential on $H^{\bullet}(\hgr\icg(n),d_0)$ by
\begin{equation*}
\nabla:=\Phi^{-1}\circ d\circ \Phi
\end{equation*}
Defined this way, $\nabla^2=0$, $\Phi$ commutes with the differentials and
\begin{equation*}
(H^{\bullet}(\hgr\icg(n),d_0),\nabla)\overset{\Phi}{\cong}  (\im(\pi),d)
\end{equation*}
as chain complexes. Since $(\im(\pi),d)\overset{incl}{\hookrightarrow} (\icg(n),d)$ is a quasi-isomorphism, $(H^{\bullet}(\hgr\icg(n),d_0),\nabla)$ is quasi-isomorphic to $(\icg(n),d)$ as well.

\end{proof}

\begin{remark}
Note that the differential $\nabla$ splits as $\nabla=\nabla_1+\nabla_2+\dots$ where applying  $\nabla_i$ creates $i$ new internal loops.
\end{remark}

\begin{remark}\label{remark:composition}
Denote the compositions by 
\begin{align*}
i:&H^{\bullet}(\hgr\icg(n),d_0)\cong \im(\pi) \overset{incl}{\hookrightarrow} \icg(n)\\
p:&\icg(n)\overset{\pi}{\longrightarrow} \im(\pi)\cong H^{\bullet}(\hgr\icg(n),d_0).
\end{align*}
The compositions $pi$ and $ip$ are
\begin{align*}
pi=&\Phi^{-1} \circ \pi \circ incl \circ \Phi=\id \text{ as } \pi |_{\im(\pi)}=\id\\
ip=&incl \circ \Phi \circ \Phi^{-1} \circ \pi= incl \circ \pi.
\end{align*}
Note that $i$ and $p$ are chain maps, that is they satisfy
\begin{align*}
i \nabla =& d i\\
\nabla p =& p d.
\end{align*}
Moreover, Proposition \ref{prop:homotopy} ensures that there is a homotopy $h$ between $\id$ and $ip$, i.e.
\begin{equation*}
\id-ip=dh+hd
\end{equation*}
In the setting above, $(H^{\bullet}(\hgr\icg(n),d_0),\nabla)$ together with the chain maps $i$, $p$ and the homotopy $h$ form a homotopy retract of $(\icg(n),d)$.
\end{remark}

Applying the homotopy transfer theorem \ref{thm:homotopytransfer}, we readily obtain the following result.
\begin{proposition}
The $L_\infty$-structure on $\icg(n)$ may be transferred to an $L_\infty$-structure on $H^{\bullet}(\hgr\icg(n),d_0)$ such that the map $i:H^\bullet(\hgr\icg(n),d_0)\hookrightarrow \icg(n)$ may be extended to an $L_\infty$-quasi-isomorphism.
\end{proposition}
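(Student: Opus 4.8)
The plan is to obtain this proposition as a direct application of the homotopy transfer theorem, Theorem \ref{thm:homotopytransfer}, to the data already assembled above. Essentially all of the genuine work has been carried out in the preceding propositions and in Remark \ref{remark:composition}; what remains is to check that we are literally in the situation of a homotopy retract, after which the transferred structure and the $L_\infty$-quasi-isomorphism are produced by the theorem itself.

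First I would recall the two pieces of input. On the one hand, $\icg(n)$ carries an $L_\infty$-structure $\{l_k\}_{k\geq 2}$ coming from the identification $\graphs(n)\cong S(\icg(n)[1])$ together with the differential $d$; here $l_1=d$ and the higher brackets are the gluing-and-splitting operations described above. On the other hand, Remark \ref{remark:composition} provides the chain complex $(H^{\bullet}(\hgr\icg(n),d_0),\nabla)$ together with chain maps $i$, $p$ and a homotopy $h$. I would then verify that this is a homotopy retract in the precise sense of the definition: that $i$ and $p$ are chain maps ($i\nabla=di$ and $\nabla p=pd$), that $i$ is a quasi-isomorphism (its image $\im(\pi)$ is a quasi-isomorphic subcomplex of $\icg(n)$ by the preceding propositions, and $pi=\id$), and that $h$ is a homotopy between $\id$ and $ip$, i.e. $\id-ip=dh+hd$. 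All of these identities were established in Remark \ref{remark:composition}, so this verification is immediate.

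With the homotopy retract in hand, I would simply invoke Theorem \ref{thm:homotopytransfer}: the $L_\infty$-structure transfers to operations $\{m_k\}_{k\geq 2}$ on $H^{\bullet}(\hgr\icg(n),d_0)$ via the explicit rooted-tree formula \eqref{eq:transferredstructure}, namely $m_k=\sum_{t\in RT_k}\pm\, p\, t(l,h)\, i^{\otimes k}$, and the map $i$ extends to an $L_\infty$-quasi-isomorphism $\ti=(i,i_2,i_3,\dots)$ whose higher components are given by the analogous rooted-tree formula of Remark \ref{thm:Kontsevich-Soibelman}. This is exactly the assertion of the proposition.

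The step I expect to require a word of care — though it is not a genuine obstacle — is well-definedness in the presence of the completion $\hgr$. For each fixed arity $k$, however, the sum defining $m_k$ runs over the finite set $RT_k$ of rooted trees with $k$ leaves, so $m_k$ is a finite composite of the maps $i$, $p$, $h$ and the brackets $l_j$; no convergence issue arises. Compatibility with the loop-order filtration, hence with the completion, is automatic since $i$, $p$, $h$ and $\nabla$ all respect it. Thus the transferred structure is well-defined and the proposition follows.
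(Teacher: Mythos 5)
Your proposal is correct and follows exactly the paper's route: the paper also obtains this proposition by directly applying the homotopy transfer theorem (Theorem \ref{thm:homotopytransfer}) to the homotopy retract assembled in Remark \ref{remark:composition}. Your extra remark on finiteness of the rooted-tree sums and compatibility with the loop filtration is a harmless elaboration of what the paper leaves implicit.
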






\subsection{$\icg$ and $H^{\bullet}(\hgr\icg,d_0)$ as cosimplicial objects}

The family of $L_\infty$-algebras $\{\icg(n)\}_{n\geq 1}$ together with the strict $L_\infty$-morphisms $\{\delta_j:\icg(n)\rightarrow \icg(n+1)\}_{j=0}^{n+1}$ and $\{s_j:\icg(n)\rightarrow \icg(n-1)\}_{j=1}^{n}$ for all $n\geq 0$ given by
\begin{itemize}
\item{$\delta_0$ (and $\delta_{n+1}$): add an additional external vertex labeled by $1$ ($n+1$) and raise the labels of the other external vertices by one (leave the labels invariant).
}
\item{$\delta_j$ for $j\neq 0, n+1$: split the $j$th vertex into two (rename them by $j$ and $j+1$) and sum over all ways of reconnecting the ``tangling" loose edges. The labels of the external vertices which were greater than $j$ are all raised by one.
}
\item{$s_j$: delete the $j$th external vertex and all edges connected to it. All labels of external vertices greater than $j$ get lowered by one.
}
\end{itemize}
form a cosimplicial object in the category of $L_\infty$-algebras. Operadically, for $\Gamma\in \icg(n)$, $\delta_j(\Gamma)=\Gamma \circ_j (\circ \hspace{0.3cm} \circ)$. For all $n$, we define a \emph{cosimplicial differential} $\delta:\icg(n)\rightarrow \icg(n+1)$ by
\begin{equation*}
\delta:=\sum\limits_{j=0}^{n+1}{(-1)^j \delta_j}.
\end{equation*}

\begin{proposition}\label{prop:cosimplicial}
On $\{H^{\bullet}(\hgr\icg(n),d_0)\}_{n\geq 1}$ we may define $L_\infty$-morphisms $\{\delta'_j:H^{\bullet}(\hgr\icg(n),d_0)\rightarrow H^{\bullet}(\hgr\icg(n+1),d_0)\}_{j=0}^{n+1}$ and $\{s'_j:H^{\bullet}(\hgr\icg(n),d_0)\rightarrow H^{\bullet}(\hgr\icg(n-1),d_0)\}_{j=1}^{n}$ for all $n\geq 0$ which turn $\{(H^{\bullet}(\hgr\icg(n),d_0)\}_{n\geq 1}$ into a cosimplicial object in the category of $L_\infty$-algebras.
\end{proposition}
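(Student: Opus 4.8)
The plan is to transport the strict cosimplicial structure on $\{\icg(n)\}_{n\geq 1}$ across the homotopy retracts recorded in Remark~\ref{remark:composition}, exploiting that the coface and codegeneracy maps $\delta_j$ and $s_j$ are \emph{strict} $L_\infty$-morphisms. For each $n$ let $\ti$ and $\tp$ denote the $L_\infty$-extensions of the structure maps $i$ and $p$ furnished by Remark~\ref{thm:Kontsevich-Soibelman}. I would then set
\[
\delta'_j:=\tp\circ\delta_j\circ\ti,\qquad s'_j:=\tp\circ s_j\circ\ti.
\]
As composites of $L_\infty$-morphisms (the outer two from homotopy transfer, the middle one strict), each $\delta'_j$ and $s'_j$ is automatically an $L_\infty$-morphism between the transferred $L_\infty$-algebras, and their linear parts are the maps induced on $H^{\bullet}(\hgr\icg(n),d_0)$: since $\delta_j$ only splits external vertices and $s_j$ only deletes one, both preserve the number of internal loops, hence commute with $d_0$ and descend to $d_0$-cohomology.

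The real content is to upgrade the cosimplicial relations from the linear level (where they hold by functoriality of the induced maps) to genuine identities of the \emph{full} $L_\infty$-morphisms. Expanding a composite such as $\delta'_j\circ\delta'_i=\tp\,\delta_j\,(\ti\tp)\,\delta_i\,\ti$, one sees that the two sides of a cosimplicial relation differ only through the middle factor $\ti\tp$, which is merely $L_\infty$-homotopic to the identity rather than equal to it. I would resolve this not by collapsing $\ti\tp$ after the fact, but by arranging that $\ti$ and $\tp$ themselves intertwine the two cosimplicial diagrams, i.e. that
\[
\delta_j\circ\ti=\ti\circ\delta'_j,\qquad \tp\circ\delta_j=\delta'_j\circ\tp
\]
hold exactly (and likewise for $s_j$). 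Granting these, every relation transports verbatim; for instance $\delta'_j\circ\delta'_i=\tp\,\delta_j\,\delta_i\,\ti=\tp\,\delta_i\,\delta_{j-1}\,\ti=\delta'_i\circ\delta'_{j-1}$, and similarly for the remaining coface/codegeneracy identities.

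To produce these intertwining relations I would reduce them to a single, purely $d_0$-level compatibility. The tree formulas defining $\ti$ and $\tp$ are assembled solely from $i$, $p$, the homotopy $h$, and the brackets $l$; because $\delta_j,s_j$ are strict $L_\infty$-morphisms, they already commute with the brackets in the morphism sense, so it suffices that they commute with the contraction data. Moreover $h$, $\pi$ and hence $i,p$ are built from the $d_0$-data $\pi_0,h_0$ and from $d'=d-d_0$ through the formulas of Proposition~\ref{prop:homotopy}, and $\delta_j,s_j$ commute with both $d_0$ and $d'$; consequently it is enough to arrange that $\delta_j$ and $s_j$ commute with the chosen $\pi_0$ and $h_0$. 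Pushing $\delta_j$ through the tree decorations vertex by vertex then yields the intertwining relations by induction on arity.

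The crux—and where I expect the genuine difficulty to lie—is therefore constructing such $(\pi_0,h_0)$ \emph{coherently across all arities simultaneously}, the obstruction being precisely the cofaces $\delta_j$, which relate complexes $\icg(n)$ and $\icg(n+1)$ of different sizes. Concretely this amounts to splitting the cosimplicial short exact sequence $0\to\im d_0\to\ker d_0\to H^{\bullet}(\hgr\icg,d_0)\to 0$ in a way natural in the cosimplicial variable, together with a compatible family of contracting homotopies. I would build this by induction on the internal loop order (using that $d_0$ and all $\delta_j,s_j$ preserve the loop filtration), constructing the harmonic representatives and the homotopy $h_0$ one filtration layer at a time via a Dold--Kan-type functorial splitting—or, where a finite symmetry is available, via the averaging construction of Lemma~\ref{lemma:G-action}—so that the resulting data is respected by every structure map of the cosimplicial diagram.
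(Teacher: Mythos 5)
Your overall strategy coincides with the paper's: you define $\delta'_j:=\tp\circ\delta_j\circ\ti$ and $s'_j:=\tp\circ s_j\circ\ti$ exactly as in the paper, and you correctly reduce the verification of the cosimplicial identities to producing contraction data ($\pi_0$, $h_0$, hence $h$, $\pi$, $i$, $p$ and their $L_\infty$-extensions) that is compatible with the cosimplicial maps. (A small structural difference: the paper only needs the weaker statement that the \emph{composite} $\ti\tp$ commutes with $\delta_j$ and $s_j$ — it then shuttles $\ti\tp$ back and forth through the $\delta$'s in the computation of $\delta'_j\delta'_i$ — whereas you ask for the stronger individual intertwinings $\delta_j\ti=\ti\delta'_j$ and $\tp\delta_j=\delta'_j\tp$, which would additionally require $\tp\ti=\id$ at the $L_\infty$ level; the weaker form suffices and is easier to obtain.)

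The genuine gap is at the step you yourself flag as the crux: the actual construction of $\pi_0$ and $h_0$ commuting with all the $\delta_j$ and $s_j$ simultaneously. Your proposed mechanism — induction on the internal loop order, plus a ``Dold--Kan-type functorial splitting'' — does not engage the real obstruction. The differential $d_0$ already preserves the internal loop number (it is the associated graded differential), so there is nothing to induct on there; the difficulty lies entirely in the fact that $\delta_j$ redistributes the edges incident to an external vertex over two external vertices, relating complexes of different arities, and a generic choice of harmonic splitting has no reason to respect this. The paper's resolution (the proof of Lemma~\ref{lemma:commutative}) is a concrete symmetrization trick: one identifies $\icg(n)(k)$, the graphs with $k$ edges joining internal to external vertices, with the $S_{k_1}\times\cdots\times S_{k_n}$-invariant part of the space $\icg^{1-val}(k)$ of graphs with $k$ \emph{univalent} external vertices, via the map $Sym$. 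Under this identification the cofaces $\delta_j$ become sums over $(l,k_j-l)$-unshuffles, i.e.\ they are expressed purely through the $S_k$-action; hence a single $S_k$-equivariant contraction on $(\icg^{1-val}(k),d_0)$ — obtained by the averaging of Lemma~\ref{lemma:G-action}, which you do mention but only conditionally — induces, for every $n$ at once, contraction data on $\icg(n)(k)$ commuting with all $\delta_j$ and $s_j$; one then extends from $d_0$ to $d$ by Proposition~\ref{prop:homotopy}, whose perturbation formulas preserve the commutation. Without this (or an equivalent) construction, the claimed intertwining relations, and with them the cosimplicial identities for $\delta'_j$ and $s'_j$, are not established.
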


\begin{lemma}\label{lemma:commutative}
For all $n$, there exists a homotopy $h_n$ on $(\icg(n),d)$ between $\id$ and $ip$ that commutes with the cosimplicial maps $\{s_j\}_{j=1}^{n}$ and $\{\delta_j\}_{j=0}^{n+1}$, i.e.
\begin{align*}
h_{n+1}\delta_j=&\delta_j h_n\\
s_j h_n=& h_{n-1} s_j.
\end{align*}
\end{lemma}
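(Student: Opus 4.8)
The plan is to reduce the statement to the existence of a single compatible choice of contraction data for the ``internal'' differential $d_0$, and then to propagate that choice through the universal formula of Proposition \ref{prop:homotopy}. Recall that the homotopy $h=h_n$ produced there is the power series
\begin{equation*}
h=h_0\cdot\frac{1}{1+d'h_0}=h_0-h_0d'h_0+h_0d'h_0d'h_0-\dots,
\end{equation*}
with $d'=d-d_0$ and $h_0$ a homotopy between $\id$ and $\pi_0$ for $d_0$. Each term is a word in $h_0$ and $d'$ alone, and $\pi_0$ does not appear. Hence, for a fixed cosimplicial operator $\varphi\in\{\delta_j,s_j\}$, if both $h_0$ and $d'$ intertwine the source and target copies of $\varphi$, then so does every term, and therefore so does $h$; and then $ip=\id-(dh+hd)$ automatically intertwines $\varphi$ as well. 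The problem thus splits into two independent tasks: first, that $d'$ (equivalently $d_0$) commutes with the cosimplicial maps; second, the construction of a family $\{(\pi_0^{(n)},h_0^{(n)})\}_n$ of contraction data for $d_0$ compatible with all $\delta_j$ and $s_j$.

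For the first task I would argue via the internal-loop filtration. The operators $\delta_j\colon\icg(n)\to\icg(n+1)$ and $s_j\colon\icg(n)\to\icg(n-1)$ are strict $L_\infty$-morphisms, in particular chain maps for the total differential, so $d\,\delta_j=\delta_j\,d$ and $s_j\,d=d\,s_j$. Moreover each preserves the number of internal loops: an internal loop is a cycle running through internal vertices only, and adding, deleting, or splitting an external vertex together with its incident edges can neither create nor destroy such a cycle. Writing $d=\sum_{i\geq 0}d_i$ according to the number of newly created loops and comparing components of equal loop-degree in $d\,\delta_j=\delta_j\,d$ yields $d_i\,\delta_j=\delta_j\,d_i$ for every $i$, and similarly for $s_j$. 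In particular $d_0$ and $d'=d-d_0$ commute with all $\delta_j$ and $s_j$, and the cosimplicial maps are chain maps for $(\icg(\bullet),d_0)$ too.

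The second task is the heart of the matter, and I expect it to be the main obstacle. The decisive feature is that $d_0$ splits internal vertices only and is therefore entirely ``blind'' to the external labels: it preserves, for each external vertex, the set of incident edges, and never touches edges joining two external vertices. Thus $(\icg(n),d_0)$ decomposes as a direct sum over the external combinatorial data (the edges among external vertices, and the external endpoint of each leg), and on each summand $d_0$ acts purely through the underlying internally connected graph decorated with a prescribed collection of legs. My plan is to fix a contraction $(\pi_0,h_0)$ for this ``internal graph with legs'' complex once and for all, natural in the legs, and to let $h_0$ act identically across all summands, i.e.\ without reading the external labels. Such an $h_0$ manifestly commutes with the splitting operators $\delta_j$ (which reorganize the legs at vertex $j$ but leave the internal graph untouched) and with the label-shifting maps $\delta_0,\delta_{n+1}$.

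The delicate points, where I expect the real work to concentrate, are twofold. First, compatibility with the face maps $s_j$: deleting vertex $j$ erases its legs and may drop an internal vertex below trivalence, so the chosen contraction must be coherent under forgetting legs, which forces $h_0$ to be defined simultaneously and consistently for all leg-decorations rather than one $n$ at a time. Second, well-definedness of the contraction itself: I would use the averaging technique of Lemma \ref{lemma:G-action} over the symmetries permuting legs and external labels to arrange that relations (i)--(v) of Proposition \ref{prop:homotopy} persist for the symmetric, label-blind choice. Once a single such $(\pi_0,h_0)$ is fixed, substituting it into the displayed power series yields the family $\{h_n\}$, and the two asserted commutation identities follow termwise from the first task.
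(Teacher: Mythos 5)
Your proposal follows essentially the same route as the paper: reduce to finding contraction data $(\pi_0,h_0)$ for $d_0$ that is blind to how the legs are grouped onto external vertices, obtain it by the averaging trick of Lemma \ref{lemma:G-action} applied to the complex of internally connected graphs with univalent external legs (the paper's $\icg^{1-val}(k)$, transported to $\icg(n)$ via the symmetrization isomorphism onto $S_{k_1}\times\cdots\times S_{k_n}$-invariants), and then propagate the commutation through the perturbation formula of Proposition \ref{prop:homotopy}, whose terms involve only $h_0$ and $d'$. The two ``delicate points'' you flag are exactly the ones the paper's proof addresses.
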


Let us prove Proposition \ref{prop:cosimplicial} using Lemma \ref{lemma:commutative}.

\begin{proof}[Proof of Proposition \ref{prop:cosimplicial}]
Lemma \ref{lemma:commutative} tells us that we have a family of homotopies $\{h_n\}_{n\geq 1}$ between $\id$ and $ip$ that commute with the cosimplicial maps. To simplify notation, we shall omit the index $n$ for the homotopy. By Remark \ref{thm:Kontsevich-Soibelman} the maps $i$, $p$ may be extended to $L_\infty$-morphisms $\ti$, $\tp$. Note that $\ti\tp$ contains only compositions of the maps $h$, $\{l_n\}_{n\geq 2}$ and the composition $ip$, all of which commute with the cosimplicial maps. As $L_\infty$-maps they thus satisfy
\begin{align*}
\delta_j \ti\tp=\ti\tp \delta_j\\
s_j\ti \tp=\ti\tp s_j.
\end{align*}
Possible candidates for the cosimplicial maps on $ H^{\bullet}(\hgr\icg(n),d_0)$ are
\begin{align*}
\delta'_j:=&\tp\circ \delta_j \circ \ti\\
s'_j:=& \tp\circ s_j \circ \ti,
\end{align*}
where the composition is composition as $L_\infty$-maps. We need to check whether they satisfy the cosimplicial relations, i.e. for $i<j$
\begin{align*}
\delta'_j\delta'_i=&\tp\circ \delta_j \circ \ti\circ \tp\circ \delta_i \circ \ti=\tp\circ \ti \circ \tp\circ  \delta_j \circ \delta_i \circ \ti=\tp\circ \ti \circ \tp\circ  \delta_i \circ \delta_{j-1} \circ \ti\\
=&\tp\circ \delta_i \circ \ti \circ \tp\circ   \delta_{j-1} \circ \ti=\delta'_i\delta'_{j-1}.
\end{align*}
Analogously, for $i\leq j$, $s'_j s'_i=s'_is'_{j+1}$.
The relations
\begin{equation*}
s'_j \delta'_i = 
\begin{cases} 
\delta'_i s'_{j-1} &\mbox{if } i<j \\ 
\id & \mbox{if } i=j \text{ or } i=j+1\\
\delta'_{i-1} s'_j & \mbox{if } i>j+1
\end{cases}
\end{equation*}
follow from a similar easy computation.
\end{proof}

\begin{proof}[Proof of Lemma \ref{lemma:commutative}]
Fix $n,k\in \mathbb{N}$. Consider the space $\icg^{1-val}(k)$ of internally connected graphs with $k$ \emph{univalent} external vertices. There is an obvious $S_k$-action which permutes the labels of the $k$ external vertices. This action extends to the direct sum
\begin{equation*}
J_{k,n}:=\bigoplus_{\substack{k_1\geq 0,\dots,k_n\geq 0 \\ \sum{k_i}=k}}{\icg^{1-val}(k)}.
\end{equation*}
By Lemma \ref{lemma:G-action}, on the chain complex $(J_{k,n},d_0)$ there exists a projection $\pi_0$ and a homotopy $h_0$ between $\id$ and $\pi_0$ which commute with this $S_k$-action. In particular, $\pi_0$ and $h_0$ restrict to 
\begin{equation*}
I_{k,n}:=\bigoplus_{\substack{k_1\geq 0,\dots,k_n\geq 0 \\ \sum{k_i}=k}}{(\icg^{1-val}(k))^{S_{k_1}\times\cdots\times S_{k_n}}}
\end{equation*}
Here, the action of $S_{k_1}\times\cdots\times S_{k_n}\subset S_k$ is obviously the induced one. We take a partition $\{k_1,\dots,k_n\}$ of the $k$ external edges and each $S_{k_i}$ will act only on the $k_i$ part by permutation.
To see that $\pi_0$ and $h_0$ restrict to this space, let $\Gamma\in I_{k,n}$ and $\sigma\in S_{k_1}\times\cdots\times S_{k_n}$. Then,
\begin{align*}
\sigma . h_0(\Gamma)=h_0(\sigma. \Gamma)=h_0(\Gamma)&\Rightarrow h_0(\Gamma)\in I_{k,n}\\
\sigma . \pi_0(\Gamma)=\pi_0(\sigma. \Gamma)=\pi_0(\Gamma)&\Rightarrow \pi_0(\Gamma)\in I_{k,n}.
\end{align*}
In particular, this means that $\pi_0$ and $h_0$ preserve each $S_{k_i}$-invariant part. Denote by $\icg(n)(k)$ the space of graphs with $n$ external vertices and $k$ edges connecting internal and external vertices. There is an isomorphism of chain complexes
\begin{equation*}
Sym: (\icg(n)(k),d_0)\longrightarrow (I_{k,n},d_0)
\end{equation*}
Abbreviate the group $S_{k_1}\times\cdots\times S_{k_n}=:G(k_1,\dots,k_n)$. The map is given by
\begin{equation*}
Sym(\Gamma):=\frac{1}{k_1!\dots k_n!}\sum\limits_{\sigma \in G(k_1,\dots,k_n)}{\sigma.\tGamma}
\end{equation*}
where $\tGamma$ is obtained by assigning an external vertex to each edge connecting an internal vertex to an external one. An external vertex $i$ is thus sent to $k_i$ univalent external vertices, labeled by following the order of the $k_i$ incoming edges (for the symmetrization, the order in which the $k_i$ external vertices are labeled is actually irrelevant). Note that, because $d_0$ splits only internal vertices, $Sym$ is indeed an isomorphism of chain complexes, i.e.
\begin{equation*}
Sym(d_0\Gamma)=d_0 Sym(\Gamma).
\end{equation*}
For $j\in \{1,\dots,n\}$, the cosimplicial maps $\delta_j:\icg(n)(k)\rightarrow \icg(n+1)(k)$ are given by splitting the $j$-th external vertex and summing over all ways of reconnecting the ``tangling loose" edges. On $I_{k,n}$, the corresponding operations are given by maps $\tdelta_j$ satisfying
\begin{equation*}
\tdelta_j Sym(\Gamma)=Sym (\delta_j \Gamma)
\end{equation*}
for $\Gamma\in \icg(n)(k)$. Explicitly, the right hand side is given by
\begin{equation*}
Sym (\delta_j \Gamma)=\sum\limits_{l=0}^{k_j}{\sum\limits_{\tau\in Unsh(l,k_j-l)}{\frac{1}{k_1!\dots l!(k_j-l)!\dots k_n!}\sum\limits_{\sigma\in G(k_1,\dots,k_{j-1},l,k_j-l,k_{j+1},\dots ,k_n)}{\sigma. \tau. \tGamma} }}
\end{equation*}
With this formula at hand, it is easy to see that $\pi_0$ and $h_0$ commute with $\tdelta_j$ on $I_{k,n}$. For this, let $\Gamma' \in I_{k,n}$. Then there exists a $\Gamma\in \icg(n)(k)$ satisfying $\Gamma'=Sym(\Gamma)$ and
\begin{align*}
h_0 \tdelta_j(\Gamma') =& h_0\tdelta_j (Sym (\Gamma))=h_0 Sym(\delta_j \Gamma)\\
=& \sum\limits_{l=0}^{k_j}{\sum\limits_{\tau\in Unsh(l,k_j-l)}{\frac{1}{k_1!\dots l!(k_j-l)!\dots k_n!}\sum\limits_{\sigma\in G(k_1,\dots,k_{j-1},l,k_j-l,k_{j+1},\dots ,k_n)}{h_0 \sigma. \tau. \tGamma} }}\\
=&\sum\limits_{l=0}^{k_j}{\sum\limits_{\tau\in Unsh(l,k_j-l)}{\frac{1}{k_1!\dots l!(k_j-l)!\dots k_n!}\sum\limits_{\sigma\in G(k_1,\dots,k_{j-1},l,k_j-l,k_{j+1},\dots ,k_n)}{\sigma. \tau. h_0 \tGamma} }}\\
=&\tdelta_j h_0 (Sym(\Gamma))=\tdelta_j h_0 (\Gamma').
\end{align*}
The proof that $\pi_0$ commutes with $\tdelta_j$ is analogous. Next, define a projection $\pi$ and a homotopy $h$ on $(\icg(n)(k),d_0)$ via
\begin{align*}
\pi:=&Sym^{-1} \pi_0 Sym\\
h:=&Sym^{-1} h_0 Sym.
\end{align*}
Because $Sym$ is a chain map, $h$ is a homotopy between $\id$ and $\pi$ with respect to the differential $d_0$. Moreover, $\pi$ and $h$ commute with the cosimplicial maps $\delta_j$. For $\Gamma \in \icg(n)(k)$,
\begin{align*}
&h_0\tdelta_j Sym(\Gamma)=h_0 Sym(\delta_j \Gamma)=Sym(h\delta_j\Gamma)\\
=&\tdelta_j h_0 Sym(\Gamma)=\tdelta_j Sym(h \Gamma)=Sym(\delta_j h \Gamma)
\end{align*}
Using the fact that $Sym$ is an isomorphism, we find,
\begin{equation*}
\delta_j h=h \delta_j.
\end{equation*}
Analogously, one can show
\begin{equation*}
\pi\delta_j=\delta_j\pi.
\end{equation*}
Remark that because $h_0$ and $\pi_0$ preserve the $S_{k_i}$-invariant parts of some $Sym(\Gamma)\in (\icg^{1-val}(k))^{S_{k_1}\times\cdots\times S_{k_n}}$, $h$ and $\pi$ will preserve the $k_i$ edges connected to the $i$th external vertex of $\Gamma$, for all $i$ (as in, after applying $h$ or $\pi$ the images of these $k_i$ edges will be connected to the image of the external vertex $i$).
Also note that $\pi$ and $h$ correspond to the chain complex $(\icg(n),d_0)$ (note that $\icg(n)$ is the direct product over $k\geq 1$ of all $\icg(n)(k)$), and \emph{not} to $(\icg(n),d)$.
However, by Proposition \ref{prop:homotopy}, we can extend these two maps to $(\icg(n),d)$. Call them $H$ and $P$. These extensions are constructed using only maps which commute with the $\delta_j$. Therefore the extended projection and homotopy will still commute with the cosimplicial maps.

Note that $\pi$ and $h$ preserve the $k_j$ edges connecting internal to external vertices. Thus for $j=0$,
\begin{equation*}
h\delta_0 \Gamma=h (\underset{1}{\circ} \hspace{0.3cm} \Gamma)=\underset{1}{\circ} \hspace{0.3cm} h \Gamma= \delta_0 h \Gamma
\end{equation*}
and analogously for $\pi$. Therefore $h \delta_0=\delta_0 h$, $\pi \delta_0=\delta_0 \pi$. Similarly, this holds also for $j=n+1$.

The $s_j$ maps are given by simply forgetting the $j$th external vertex and all edges connected to it. Again, as the homotopy $h$ and the projection preserve the edges connected to external vertices, 
\begin{align*}
h s_j=&s_j h\\
\pi s_j=& s_j \pi
\end{align*}
for all $j\in\{1,\dots,n\}$. Also, by construction, the extended homotopy $H$ and projection $P$ commute with the maps $s_j$.

As in Remark \ref{remark:composition}, denote by $i$ and $p$ the compositions
\begin{align*}
i:&H^{\bullet}(\hgr\icg(n),d_0)\cong \im(P) \overset{incl}{\hookrightarrow} \icg(n)\\
p:&\icg(n)\overset{P}{\longrightarrow} \im(P)\cong H^{\bullet}(\hgr\icg(n),d_0).
\end{align*}
The extended homotopy $H$ is a homotopy between $\id$ and $ip$ satisfying the properties requested in Lemma \ref{lemma:commutative}.
\end{proof}

\section{The Kashiwara-Vergne Lie algebra}
\subsection{A spectral sequence leading to the Kashiwara-Vergne Lie algebra}
Consider the spectral sequence obtained through the filtration by internal loops. We find that the first page $E_1^{\bullet,\bullet}$ is exactly the aforementioned cohomology of the associated graded complex with respect to the differential $d_0$, that is,
\begin{equation*}
E_1^{p,q}=H^{p+q}(gr \icg(n)^p,d_0).
\end{equation*}
P. Severa and T. Willwacher explain in \cite{Severawillwacher2011} that $H^{0}(\hgr\icg(n)^0,d_0)$, which consists of internally trivalent trees in $\icg(n)$ modulo the IHX relation, can be identified (as a Lie algebra) with the Lie algebra of special derivations (for an introduction see \cite{Alekseev2012}). In formulas,
\begin{equation*}
E_1^{0,0}=H^{0}(\hgr\icg(n)^0,d_0)\cong \sder_n.
\end{equation*}
We give the isomorphism in the appendix. This result already appeared in some form in V. Drinfeld's paper \cite{Drinfeld1991}. The Lie bracket on $E_1^{0,0}$ is given by identifying external vertices, summing over all ways of splitting external vertices without creating new internal loops and then keeping only internally connected, internally trivalent trees. This is justified in the following remark.

\begin{remark}
Denote by $m_2:E_1^{\bullet,\bullet}\otimes E_1^{\bullet,\bullet} \rightarrow E_1^{\bullet,\bullet}$ the arity two component of the $L_\infty$-structure on the cohomology of the associated graded. In terms of the structure on $\icg(n)$, it is given by 
\begin{equation*}
m_2=p\circ [-,-]\circ i^{\otimes 2}.
\end{equation*}
Denote by $[-,-]_{Ih}$ the projection to $E_1^{0,0}$ of the image of $m_2$ restricted to $E_1^{0,0}\otimes E_1^{0,0}$. As it correspond to the bracket on $\sder_n$ (which is sometimes named after Y. Ihara) we shall refer to it as the Ihara bracket. It is thus a map 
\begin{equation*}
[-,-]_{Ih}:E_1^{0,0}\otimes E_1^{0,0}\rightarrow E_1^{0,0}.
\end{equation*}
For  $\overline{x_0},\overline{y_0}\in E_1^{0,0}$, $[\overline{x_0},\overline{y_0}]_{Ih}=\overline{[x_0,y_0]_0}$. Here $[-,-]_0$ is the term of the bracket $[-,-]$ on $\icg(n)$ that does not create any new loops. To see this, first note,
\begin{align*}
i(\overline{x_0})&=x_0+x_1+\dots\\
i(\overline{y_0})&=y_0+y_1+\dots
\end{align*}
Then
\begin{equation*}
[i(\overline{x_0}),i(\overline{y_0})]=[x_0,y_0]_0+[x_1,y_0]_0+[x_0,y_1]_0+\dots
\end{equation*}
Applying the projection $p$ we obtain $m_2(\overline{x_0}\otimes \overline{y_0})=\overline{[i(\overline{x_0}),i(\overline{y_0})]}\in E_1^{\bullet,\bullet}$. The only term in $E_1^{0,0}$ is $\overline{[x_0,y_0]_0}$, which therefore equals $[\overline{x_0},\overline{y_0}]_{Ih}$ by definition. Since $[-,-]_0$ is a Lie bracket on $\icg(n)$, $[-,-]_{Ih}$ defines a Lie bracket on $E_1^{0,0}$.
\end{remark}

Furthermore, also from \cite{Severawillwacher2011}, we know that the internally trivalent one-loop part of $\icg(n)$ modulo IHX (given by $H^{1}(\hgr\icg(n)^1,d_0)$) is isomorphic to cyclic words in $n$ letters, denoted by $\tr_n$ in \cite{Alekseev2012}, modulo the relation 
\begin{equation*}
tr(w)=-(-1)^{length(w)}tr(\tilde{w})
\end{equation*}
where $\tilde{w}$ corresponds to the word $w$ but read backwards. We will denote the space of cyclic words modulo this relation by $\tr_n^{(1)}$, i.e.
\begin{equation*}
E_1^{1,0}=H^{1}(\hgr\icg(n)^1,d_0)\cong \tr_n^{(1)}.
\end{equation*}
Moreover, \v Severa and Willwacher show in (\cite{Severawillwacher2011} Proposition 5.) that there is an injective map $\tr_n^{(1)}\hookrightarrow \tr_n$ (see the appendix) making the diagram
\begin{center}
\begin{tikzcd}
   E_1^{0,0}\arrow{r}{\cong}\arrow{d}{\diva}  
   & \sder_n \arrow{d}{\text{div}}  \\ 
  E_1^{1,0} \arrow[hookrightarrow]{r}
   & \tr_n.
\end{tikzcd}
\end{center}
commute. Here, $\text{div}:\sder_n\rightarrow \tr_n$ is the ``divergence map" defined by A. Alekseev and C. Torossian in \cite{Alekseev2012}. In particular, $E_2^{0,0}=\ker\diva\cong \ker\text{div}$ is a Lie algebra, as shown in \cite{Alekseev2012}.

\begin{definition}
The \emph{Kashiwara-Vergne Lie algebra} is 
\begin{equation*}
\krv_n:=\ker\text{div}=\{x\in \sder_n| \text{div}(x)=0\in \tr_n\}.
\end{equation*}
\end{definition}
Since $\krv_n\cong \ker\diva=\{\overline{x}\in E_1^{0,0}|\diva (\overline{x})=0\in E_1^{1,0}\}$, all information determining $\krv_n$ is given by an equation involving internally trivalent trees and internally trivalent one-loop graphs. In what follows, we extend this notion to higher loop orders. Note that for $r\in \mathbb{N}$ (see for instance \cite{weibel1995})
\begin{equation*}
E_r^{0,0}=\dfrac{\{x=x_0+x_1+x_2+\dots \in \icg(n)| x_i \text{ graph with exactly } i \text{ loops}, \text{ } \deg(x)=0, \text{ } dx=0 \mod r \text{ loops}\}}{Q}
\end{equation*}
where
\begin{align*}
Q:=&\{dy| y\in \icg(n), \text{ } \deg(y)=-1\}\\
+&\{ x=x_1+x_2+\dots \in \mf^1\icg(n)|x_i \text{ graph with exactly } i \text{ loops}, \text{  } \deg(x)=0, \text{ } dx=0 \mod r \text{ loops}\}.
\end{align*}

\begin{lemma}
The map
\begin{align*}
i_{r}:E_r^{0,0}&\rightarrow E_1^{0,0}\cong \sder_n\\
\overline{x}=\overline{x_0+x_1+\dots} &\mapsto \overline{x_0}
\end{align*}
is injective.
\end{lemma}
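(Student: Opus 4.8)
The plan is to exploit the explicit quotient description of $E_r^{0,0}$ and of the subspace $Q$ recorded just above, together with the decomposition $d = d_0 + d_1 + \dots$ in which $d_j$ raises the internal loop order by $j$. First I would check that $i_r$ is well defined, and then prove injectivity by a single ``correct-and-absorb'' manoeuvre: given a class in the kernel, I would subtract an explicit coboundary so that the tree part of the representative disappears, and then observe that what remains automatically lies in $Q$.

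For well-definedness, let $x = x_0 + x_1 + \dots$ represent a class in $E_r^{0,0}$, so $\deg(x) = 0$ and $dx = 0$ modulo $r$ loops. Reading off the loop-order-$0$ component of $dx$ gives $d_0 x_0 = 0$, so $x_0$ is a $d_0$-cocycle of degree $0$ in loop order $0$ and determines a class $\overline{x_0} \in H^0(\hgr\icg(n)^0, d_0) = E_1^{0,0}$. To see independence of the representative I would note that both generating types of $Q$ have vanishing tree part: if $x = dy$ with $\deg(y) = -1$ then $x_0 = (dy)_0 = d_0 y_0$ is a $d_0$-coboundary, while if $x \in \mf^1\icg(n)$ starts in loop order $\geq 1$ then $x_0 = 0$ outright.

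For injectivity, suppose $\overline{x} \in E_r^{0,0}$ satisfies $i_r(\overline{x}) = \overline{x_0} = 0$ in $E_1^{0,0}$. Then $x_0 = d_0 z_0$ for some $z_0$ of degree $-1$ supported in loop order $0$, which I regard as a genuine element of $\icg(n)$. Setting $x' := x - dz_0$, the term $dz_0$ is the differential of a degree $-1$ element and hence lies in the first summand of $Q$, so $x'$ represents the same class as $x$. I would then verify that $x'$ lies in the second summand of $Q$: its loop-order-$0$ part is $x_0 - (dz_0)_0 = x_0 - d_0 z_0 = 0$, so $x' \in \mf^1\icg(n)$; it has degree $0$; and $dx' = dx - d^2 z_0 = dx = 0$ modulo $r$ loops. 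Thus $x' \in Q$, so $x = x' + dz_0 \in Q$ and $\overline{x} = 0$.

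The argument is essentially formal once the explicit description of $E_r^{0,0}$ is in hand, and I do not expect a serious obstacle. The one point demanding care is the last verification, namely that the corrected element $x'$ satisfies all three conditions defining the loop-order-$\geq 1$ summand of $Q$. This rests on two structural facts: that $d$ preserves the total degree and that $d_0$ is precisely its loop-order-preserving part, so that subtracting $dz_0$ annihilates the tree part without disturbing either the degree or the closedness-modulo-$r$-loops condition; and that $d^2 = 0$, which keeps $x'$ closed modulo $r$ loops. No deeper input from the theory of graph complexes is needed.
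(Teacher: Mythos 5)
Your argument is correct and matches the paper's proof essentially verbatim: the paper likewise takes $x_0=d_0y_0$, sets $\tilde{x}:=x-dy_0$, and observes that $\tilde{x}\in\mf^1\icg(n)$ still satisfies $d\tilde{x}=0$ modulo $r$ loops, so that $x=\tilde{x}+dy_0\in Q$. Your additional well-definedness check is a harmless (and reasonable) supplement that the paper omits.
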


\begin{proof}
Let $x=x_0+x_1+\dots\in \icg(n)$, $\deg(x)=0$, $dx=0 \mod r$ loops and assume $i_r(\overline{x})=\overline{x_0}=\overline{0}$, that is $x_0=d_0y_0$ for some tree $y_0$ of degree $-1$.  Set $\tilde{x}:=x-dy_0$. It satisfies $d\tilde{x}=dx-0=0 \mod r$ loops and $\tilde{x}=x_0+x_1+\dots-d_0y_0 \in \mf^1\icg(n)$ (all elements have at least one internal loops). Therefore $x=x-dy_0+dy_0=\tilde{x}+dy_0$ and $\overline{x}=\overline{0}\in E_r^{0,0}$. 
\end{proof}

\begin{definition}
We set
\begin{equation*}
\krv_n^{(k)}:=i_{k+1}(E_{k+1}^{0,0}).
\end{equation*}
\end{definition}
More explicitly, $\krv_n^{(k)}$ consists of classes $\overline{x_0}\in E_1^{0,0}$ for which there are graphs $x_1,x_2,\dots \in \icg(n)$ (where $x_i$ has exactly $i$ loops) of degree zero such that $x=x_0+x_1+\cdots \in \icg(n)$ satisfies $dx=0\mod k+1$ loops. 
\begin{lemma}
The new definition extends our previous notion of the Kashiwara-Vergne Lie algebra in the sense that $\krv_n^{(1)}\cong \krv_n$. 
\end{lemma}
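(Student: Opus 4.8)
The plan is to unwind the definition $\krv_n^{(1)}=i_2(E_2^{0,0})$ and to verify that, under the identification $E_1^{0,0}\cong\sder_n$, the image $i_2(E_2^{0,0})$ is precisely $\ker\diva$, which the commuting divergence square identifies with $\ker\text{div}=\krv_n$. Since all the substantial input has already been assembled (the isomorphism $E_1^{0,0}\cong\sder_n$, the identity $E_2^{0,0}=\ker\diva$, and the commutativity of the square relating $\diva$ to $\text{div}$), what remains is essentially a bookkeeping of the leading loop order.

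First I would spell out membership in $\krv_n^{(1)}$: a class $\overline{x_0}\in E_1^{0,0}$ lies in $\krv_n^{(1)}$ exactly when there are graphs $x_1,x_2,\dots$ (with $x_i$ of loop order $i$) making $x=x_0+x_1+\cdots$ a degree-zero element whose differential vanishes modulo $2$ loops. Decomposing $d=d_0+d_1+d_2+\cdots$ and reading off loop orders $0$ and $1$, this is equivalent to the pair of equations $d_0x_0=0$ and $d_0x_1+d_1x_0=0$, the tail $x_2,x_3,\dots$ being irrelevant to the congruence and hence choosable as zero. I would then match this with $\ker\diva$: the first equation says $\overline{x_0}$ is a well-defined class in $E_1^{0,0}$, while the second says $d_1x_0=-d_0x_1$ is $d_0$-exact, i.e.\ $\diva\overline{x_0}=\overline{d_1x_0}=0$ in $E_1^{1,0}$. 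Conversely, every $\overline{x_0}\in\ker\diva$ admits such an $x_1$ by $d_0$-exactness, so $x=x_0+x_1$ furnishes a preimage under $i_2$. Hence $i_2(E_2^{0,0})=\ker\diva$ as subspaces of $E_1^{0,0}$. Combined with the injectivity of $i_2$ established above, $i_2$ restricts to an isomorphism $E_2^{0,0}\overset{\sim}{\rightarrow}\ker\diva$; since $E_2^{0,0}=\ker\diva$ is realised as the leading-term subspace of $E_1^{0,0}$ and carries the restriction of the Ihara bracket, this is an isomorphism of Lie algebras. The commuting square then gives $\ker\diva\cong\ker\text{div}=\krv_n$, and therefore $\krv_n^{(1)}\cong\krv_n$.

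The only point demanding care is that the leading-term assignment $\overline{x}\mapsto\overline{x_0}$ is well defined on $E_2^{0,0}$ and surjects onto $\ker\diva$: well-definedness holds because the relations in $Q$ alter $x_0$ only by a $d_0$-boundary (elements of filtration at least $1$ contribute nothing in loop order $0$, and coboundaries $dy$ contribute $d_0y_0$), and surjectivity is exactly the lift $d_1x_0=-d_0x_1$ provided by $d_0$-exactness. I do not anticipate a genuine obstacle here; the lemma is in essence a consistency check that the spectral-sequence definition reproduces the classical $\krv_n$ at the first nontrivial page $r=2$.
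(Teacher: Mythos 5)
Your proposal is correct and follows essentially the same route as the paper: unwind the mod-$2$-loops condition into $d_0x_0=0$ and $d_1x_0+d_0x_1=0$, identify the latter with $\diva\overline{x_0}=\overline{d_1x_0}=0$ in $E_1^{1,0}$, and invoke the commuting square relating $\diva$ to $\mathrm{div}$. The only cosmetic difference is that the paper establishes the identity $\diva\overline{x_0}=\overline{d_1x_0}$ by explicitly computing the transferred differential as $\nabla\overline{x_0}=p\,d\,i(\overline{x_0})$ and extracting its $E_1^{1,0}$ component, whereas you read it off directly as the spectral-sequence differential; both are legitimate.
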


\begin{proof}
Note that $\krv_n^{(1)}$ consists of $\overline{x_0}\in E_1^{0,0}$ which may be extended to a degree zero element $x=x_0+x_1+x_2+\dots \in \icg(n)$ satisfying $dx=0 \mod 2$ internal loops. This equation is equivalent to $d_0x_0=0$ (which is satisfied by the definition of $x_0$) and $d_1x_0+d_0x_1=0 \in \icg(n)$. To prove the statement, let $\overline{x_0}\in E_1^{0,0}$. Then 
\begin{align*}
\nabla \overline{x_0}=&\nabla_1 \overline{x_0}+ \nabla _2 \overline{x_0}+\dots=\nabla p i \overline{x_0}=p d i \overline{x_0}=pd(x_0+x_1+x_2+\dots)\\
=&p (d_1x_0+d_0x_1+\dots)=\overline{d_1x_0+d_0x_1+\dots}
\end{align*}
This is an equation in $E_1^{\bullet,\bullet}$. Consider its $E_1^{1,0}$ component. It is given by
\begin{equation*}
\nabla_1\overline{x_0}=\overline{d_1x_0+d_0x_1}=\overline{d_1x_0}.
\end{equation*}
Therefore, $\ker \text{div}\cong\ker\nabla_1\cong \krv_n^{(1)}$.
\end{proof}

We obtain a sequence of inclusions
\begin{equation*}
\dots \subset \krvk_n\subset \krv_n^{(k-1)}\subset \dots \subset \krvtwo_n\subset \krv_n\subset \sder_n.
\end{equation*}

\begin{proposition}
(\cite{Severawillwacher2011}) The subspaces $\krvk_n$ are Lie subalgebras of $\sder_n$ for all $k\geq 1$.
\end{proposition}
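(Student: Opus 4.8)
The plan is to carry out the whole argument inside $\icg(n)$, exploiting that both the differential $d$ and the arity-two bracket $l_2=[-,-]$ of the $L_\infty$-structure are compatible with the filtration $\{\mf^p\}$ by internal loops. Let $\overline{x_0},\overline{y_0}\in\krvk_n$. By the explicit description of $\krvk_n$ given above, I would choose degree-zero lifts $x=x_0+x_1+\cdots$ and $y=y_0+y_1+\cdots$ in $\icg(n)$, with $x_i,y_i$ of loop order exactly $i$, such that $dx,dy\in\mf^{k+1}$. I then set $z:=[x,y]$; since $l_2$ has degree $0$ (it restricts to the Ihara Lie bracket on the degree-zero part), $z$ is again of degree zero, and the claim is that $z$ itself is a lift witnessing $[\overline{x_0},\overline{y_0}]_{Ih}\in\krvk_n$.

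First I would record the filtration behaviour of the bracket. Gluing two graphs along external vertices keeps their internal subgraphs disjoint, so the internal first Betti numbers add, and the subsequent splitting of external vertices can only raise the loop count; writing $[-,-]_m$ for the part creating $m$ new loops (as in the paper's $[-,-]_0$), the summand $[x_i,y_j]_m$ has loop order $i+j+m$, so in particular $[\mf^p,\mf^q]\subseteq\mf^{p+q}$. For a fixed loop order $\ell$ only the finitely many triples with $i+j+m=\ell$ contribute, so $z=z_0+z_1+\cdots$ is well defined in the completed space $\icg(n)$, and its zero-loop part is
\begin{equation*}
z_0=\sum_{i+j+m=0}[x_i,y_j]_m=[x_0,y_0]_0 .
\end{equation*}
By the Remark preceding the definition of $\krv_n$, this gives $\overline{z_0}=\overline{[x_0,y_0]_0}=[\overline{x_0},\overline{y_0}]_{Ih}$.

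Next I would compute $dz$. The two-input $L_\infty$-relation involves only $l_1=d$ and $l_2$ (the $l_3$-correction enters only in the three-input relation), so $d$ is a derivation of the bracket up to sign,
\begin{equation*}
dz=d[x,y]=\pm[dx,y]\pm[x,dy].
\end{equation*}
Since $dx,dy\in\mf^{k+1}$ while $x,y\in\mf^{0}$, loop-additivity gives $[dx,y],[x,dy]\in\mf^{k+1}$, hence $dz\in\mf^{k+1}$, i.e. $dz=0$ modulo $k+1$ internal loops. Thus $z$ is a degree-zero element of $\icg(n)$ whose zero-loop part represents $[\overline{x_0},\overline{y_0}]_{Ih}$ and which satisfies $dz=0$ modulo $k+1$ loops — precisely the condition defining membership in $\krvk_n=i_{k+1}(E_{k+1}^{0,0})$. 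Therefore $\krvk_n$ is closed under the Ihara bracket, and being a subspace of the Lie algebra $(\sder_n,[-,-]_{Ih})$ it is a Lie subalgebra for every $k\geq 1$.

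I expect the only genuine verifications hidden here to be the loop-additivity $[\mf^p,\mf^q]\subseteq\mf^{p+q}$ and the absence of $l_3$-terms in the two-input relation; of these, carefully tracking the internal first Betti number through the gluing-and-splitting operation is the step I would treat as the main obstacle. The independence of $\overline{z_0}$ from the chosen lifts is automatic, since $[x_0,y_0]_0$ depends only on the zero-loop parts $x_0,y_0$ and the Ihara bracket is well defined on classes in $E_1^{0,0}$.
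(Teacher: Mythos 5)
Your argument is correct and follows essentially the same route as the paper's proof: take loop-graded extensions $x$, $y$ of the two classes, observe that $[x,y]$ has zero-loop part $[x_0,y_0]_0$ representing the Ihara bracket, and use the Leibniz rule together with loop-additivity of the bracket to get $d[x,y]=[dx,y]+[x,dy]=0$ modulo $k+1$ internal loops. The paper states these steps more tersely; your explicit verification of $[\mf^p,\mf^q]\subseteq\mf^{p+q}$ and of the absence of higher $L_\infty$-corrections in the two-input relation just fills in what the paper leaves implicit.
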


\begin{proof}
The Ihara bracket of $\overline{x_0},\overline{y_0}\in E_1^{0,0}$ coincides with $\overline{[x_0,y_0]_0}$, where $[-,-]_0$ is the component of the bracket on $\icg(n)$ which does not produce any new loops. To prove the claim, let $\overline{x_0},\overline{y_0}\in \krv_n^{(k)}$. Denote their extensions by $x=x_0+x_1+\dots$ and $y=y_0+y_1+\dots$. We claim that $[x,y]$ is a suitable extension of the bracket $[\overline{x_0},\overline{y_0}]_{Ih}$. Indeed,
\begin{equation*}
[x,y]=[x_0,y_0]_0+[x_1,y_0]_0+[x_0,y_1]_0+\dots,
\end{equation*}
where $[x_1,y_0]_0+[x_0,y_1]_0$ are already graphs of loop order 1, and
\begin{equation*}
d[x,y]=[dx,y]+[x,dy]=0+0 \mod k+1 \text{ loops}.
\end{equation*}
\end{proof}

\begin{definition}
The \emph{Drinfeld-Kohno Lie algebra} $\frakt_n$ is generated by elements $t^{i,j}=t^{j,i}$, where $1\leq i,j \leq n$ and relations 
\begin{align*}
[t^{i,j},t^{k,l}]=&0 \text{ if } \#\{i,j,k,l\}=4,\\
[t^{i,j}+t^{i,k},t^{j,k}]=&0 \text{ for } \#\{i,j,k\}=3.
\end{align*}
\end{definition}

\begin{remark}
As shown in \cite{Severawillwacher2011}, the aforementioned spectral sequence converges to the Drinfeld-Kohno Lie algebra, more precisely $\frakt_n\cong E_\infty^{0,0}$. A generator $t^{i,j}$ is mapped to the equivalence class represented by the graph with no internal vertices and one edge connecting the external vertices $i$ and $j$. In particular, this implies that
\begin{equation*}
\bigcap\limits_{k\geq 1} \krvk_n\cong \frakt_n.
\end{equation*}
\end{remark}

\begin{remark}
Most of the material presented in this section already appeared in some form in P. Severa and T. Willwacher's paper \cite{Severawillwacher2011}. Our aim was to give an explicit description of the Lie algebras $\krv_n^{(k)}$ to which they hinted at in their work. Moreover, the techniques developed here will be useful in the next section.
\end{remark}



\subsection{The extended Kashiwara-Vergne Lie algebra}

For $n=2$, A. Alekseev and C. Torossian defined in \cite{Alekseev2012} the following extension of $\krv_2$,
\begin{equation*}
\hkrv_2:=\{x\in \sder_2| \text{div} (x)=tr(f(u)-f(u+v)+f(v)) \text{ for some } f(u)=\sum\limits_{k=2}^\infty{f_k u^k}\}.
\end{equation*}
They show that this is a Lie subalgebra of $\sder_2$. In fact, $[\hkrv_2,\hkrv_2]\subset \krv_2$. Moreover, they prove that for $x\in \hkrv_2$ the corresponding power series $f$ is odd, i.e. $f_k=0$ for $k$ even. In particular this implies that  $tr(f)$ corresponds to some linear combination of internally trivalent one-loop graphs under the injective map $E_1^{1,0}\cong \tr_1^{(1)}\hookrightarrow \tr_1$. On the level of graphs, it is not difficult to see that the map $\delta_{AT}:tr(f)\mapsto tr(f(u)-f(u+v)+f(v))$ corresponds to applying the cosimplicial differential $\delta':=p\circ \delta \circ i: \tr_1^{(1)}\rightarrow \tr_2^{(1)}$ to the graph associated to $tr(f)$. Including the vertex splitting differential $d$ and $\nabla$, the global picture is encoded in the following commutative diagram.

\begin{displaymath}
    \xymatrix{ \icg(2) \ar@<2pt>[r]^p \ar[d]^{d_0+d_1} & E_1^{0,0} \ar@<2pt>[l]^i \ar[d]^{\diva} \ar[r]^{\cong} & \sder_2 \ar[d]^{\text{div}}\\
               \icg(2) \ar@<2pt>[r]^p  & E_1^{1,0}\cong \tr_2^{(1)} \ar@<2pt>[l]^i \ar@{^{(}->}[r] & \tr_2\\
               \icg(1) \ar@<2pt>[r]^p \ar[u]_\delta & E_1^{1,0}\cong \tr_1^{(1)}   \ar@<2pt>[l]^i \ar@{^{(}->}[r]   \ar[u]_{\delta'}            &  \tr_1\ar[u]_{\delta_{AT}} }
\end{displaymath}
The diagram implies the following equalities.
\begin{align*}
\hkrv_2=&\{x\in \sder_2| \text{div} (x)=tr(f(u)-f(u+v)+f(v)) \text{ for some } f(u)=\sum\limits_{k=2}^\infty{f_k u^k}\}\\
=&\{\overline{x}\in E_1^{0,0}\cong \sder_2| \nabla_1 (\overline{x})=\delta'(f) \text{ for some } f\in \tr_1^{(1)}\}\\
=&\{x\in \sder_2| \exists X \in \icg(2): \deg(X)=0 \text{, }X=x+x_1+\dots\\
 &\text{ and } d_1x+d_0x_1=\delta Y  \text{ for some } Y \in \icg(1)\}\\
 =&\{x\in \sder_2| \exists X \in \icg(2): \deg(X)=0 \text{, }X=x+x_1+\dots\\
 &\text{ and } dX=\delta Y \mod 2 \text{ internal loops} \text{ for some } Y \in \icg(1)\}.
\end{align*}

As an extension of $\hkrv_2$ we suggest,
\begin{equation*}
\hkrv_2^{(k)}:=\{x\in\sder_2|\exists X \in \icg(2): \deg(X)=0 \text{, }[X]=x
 \text{ and } dX=\delta Y \mod k+1 \text{ internal loops} \text{ for some } Y \in \icg(1)\}
\end{equation*}
By $[X]=x$ we mean that the tree part of $X$ is $x$ (for some choice of representative of the class of $x\in \sder_2$, by abuse of notation),  i.e. $X$ may be decomposed as 
\begin{equation*}
X=x+x_1+x_2+x_3+\dots
\end{equation*}
with $x_i$ having $i$ internal loops. The equation $dX=\delta Y \mod k+1 \text{ internal loops}$ means that the equation holds up to loop order $k+1$, i.e. we discard all graphs having more than $k$ internal loops appearing on either side of the equation. Note that $\hkrv_2=\hkrv_2^{(1)}$. Again, there is a filtration
\begin{equation*}
\cdots\subset \hkrv_2^{(k)}\subset \hkrv_2^{(k-1)}\subset \cdots \subset \hkrvtwo_2\subset \hkrv_2\subset \sder_2.
\end{equation*}
Our main result is
\begin{theorem}\label{Thm:krvhat}
For all $k\geq 1$, $\hkrvk_2$ is a Lie subalgebra of $\sder_2$.
\end{theorem}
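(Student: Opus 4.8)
The plan is to mimic the strategy used earlier to prove that the spaces $\krvk_n$ are Lie subalgebras, namely to show that the bracket of two solutions to the relevant equation is again a solution, but now the presence of the inhomogeneous term $\delta Y$ forces me to control how $\delta$ interacts with the bracket on $\icg(2)$. Concretely, let $x,y\in \hkrvk_2$ and choose extensions $X=x+x_1+\dots$, $Y'=y+y_1+\dots \in \icg(2)$ together with $Y_X,Y_Y\in \icg(1)$ such that
\begin{align*}
dX=&\delta Y_X \mod k+1 \text{ internal loops},\\
dY'=&\delta Y_Y \mod k+1 \text{ internal loops}.
\end{align*}
The tree part of the Ihara bracket $[\overline{x},\overline{y}]_{Ih}$ equals $\overline{[X,Y']_0}$, so the natural candidate extension for $[\overline{x},\overline{y}]_{Ih}$ is $[X,Y']$, where $[-,-]$ denotes the full $L_\infty$-bracket on $\icg(2)$. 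The goal is then to produce some $Z\in \icg(1)$ with $d[X,Y']=\delta Z \mod k+1$ internal loops, which would certify $[\overline{x},\overline{y}]_{Ih}\in \hkrvk_2$.

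First I would compute $d[X,Y']$. Since $d$ is a derivation of the bracket (the $L_\infty$-relation in arity two with a strict differential), $d[X,Y']=[dX,Y']\pm[X,dY']$. Substituting the defining equations gives $d[X,Y']=[\delta Y_X,Y']\pm[X,\delta Y_Y] \mod k+1$ internal loops. The crux is therefore to rewrite each term of the form $[\delta(\text{something}),-]$ as $\delta(\text{something})$ modulo the required loop order. The key structural input is that the cosimplicial maps $\delta_j$, and hence $\delta$, are \emph{strict} $L_\infty$-morphisms, realized operadically as insertion $\delta_j(\Gamma)=\Gamma\circ_j(\circ\ \ \circ)$. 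This compatibility of $\delta$ with the operadic structure should let me commute $\delta$ past the bracket: a term like $[\delta Y_X, Y']$ in $\icg(2)$ should be expressible, up to the bracket contributions that $\delta$ itself generates, as $\delta$ applied to a bracket-type expression living in $\icg(1)$. I would make this precise by tracking, vertex by vertex, how splitting an external vertex (the content of $\delta$) interacts with the gluing-and-splitting that defines the bracket, being careful that $Y_X,Y_Y$ have one fewer external vertex than $X,Y'$.

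The hardest step will be this commutation of $\delta$ with the Lie bracket, because $\delta$ does not literally commute with $[-,-]$ on the nose; rather one expects an identity of the schematic form $\delta[a,b]=[\delta a,b]\pm[a,\delta b]+(\text{correction terms})$, and I must show that the terms $[\delta Y_X,Y']$ and $[X,\delta Y_Y]$ assemble, modulo graphs of loop order $>k$, into $\delta Z$ for a single explicitly constructible $Z\in\icg(1)$. Controlling the loop order here is delicate: $\delta$ preserves the internal-loop number while $d$ and the bracket may raise it, so I must verify that every correction term either vanishes modulo $k+1$ loops or is itself in the image of $\delta$ up to that order. Once the commutation identity is established, the conclusion is immediate: setting $Z$ to be the resulting element of $\icg(1)$ shows $d[X,Y']=\delta Z \mod k+1$ internal loops, and since the tree part of $[X,Y']$ is exactly $[\overline{x},\overline{y}]_{Ih}$, we conclude $[\overline{x},\overline{y}]_{Ih}\in\hkrvk_2$, proving closure under the bracket and hence that $\hkrvk_2$ is a Lie subalgebra of $\sder_2$.
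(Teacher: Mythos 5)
Your overall shape is right---produce an extension of $[\overline{x},\overline{y}]_{Ih}$ and an element of $\icg(1)$ witnessing the defining equation---but the step you yourself flag as the hardest one is precisely where the proof lives, and your plan for it does not go through as stated. There is no identity that rewrites $[\delta Y_X, Y']$ as $\delta$ of an element of $\icg(1)$ for an \emph{arbitrary} $Y_X\in\icg(1)$: the individual cosimplicial maps $\delta_j$ are strict $L_\infty$-morphisms, but their alternating sum $\delta$ is not, and the ``correction terms'' in your schematic identity are exactly the obstruction, not something that can be absorbed. The paper's proof has to do two things that are absent from your proposal. First (Lemma \ref{lemma:crucial}, which rests on $\ker\delta=C$, the isomorphism $H^0(\GC_2)\cong H^2(C,d)$ from \cite{Willwacher2014}, and the acyclicity $H^1(\icg(1))=0$), it replaces $Y$ by an element of the very special form $(\Gamma)_1$ with $\Gamma\in\GC_2$ closed, degree $0$ and 1-vertex irreducible, so that $\delta Y=-(\Gamma)_1\cdot(\circ\ \ \circ)$. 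Second, the candidate extension is \emph{not} the $L_\infty$-bracket $[X_1,X_2]$: it is
\begin{equation*}
X:=\Gamma_1\bullet X_2-\Gamma_2\bullet X_1+d(X_1\wedge X_2)\mod k+1\text{ internal loops},
\end{equation*}
built from the $\GC_2$-action $\bullet$ on $\graphs(2)$ together with a non-internally-connected correction $d(X_1\wedge X_2)$ whose tree part realizes the Ihara bracket. Only with this ansatz do the operadic pre-Lie identity (Lemma \ref{lemma:operad}) and the relation $\Gamma\cdot(\circ\ \ \circ)=-\delta\Gamma$ let one collapse $dX$ into $\delta Y$ with $Y=\Gamma_1\bullet(\Gamma_2)_1-\Gamma_2\bullet(\Gamma_1)_1$; one must then still check separately that the non-internally-connected pieces of $X$ and $Y$ cancel.

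So the gap is concrete: without the reduction of $Y$ to the form $(\Gamma)_1$ and without replacing the naive bracket $[X,Y']$ by the $\GC_2$-action ansatz, the terms $[\delta Y_X,Y']\pm[X,\delta Y_Y]$ simply do not assemble into $\delta Z$, and no amount of vertex-by-vertex bookkeeping will make them. Your proof as proposed cannot be completed along the announced lines; it needs the graph-complex input ($\GC_2$, its action on $\graphs$, and Willwacher's computation of $H^0(\GC_2)$) that the paper assembles in the lemmas preceding the theorem.
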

For the proof we need a few additional tools and results from the theory of graph complexes.



\subsection{The graph complex $\GC_2$}
The graph complex $\GC_2$ is a variant of M. Kontsevich's graph complex (\cite{Kontsevich1993},\cite{Kontsevich1994},\cite{Kontsevich1997}). We follow T. Willwacher's paper \cite{Willwacher2014}. 
\begin{definition}
Let  $\Gamma$ be an undirected graph with $N$ labeled vertices and $k$ edges satifying the following properties:
\begin{enumerate}
	\item{All vertices have valence at least three.}
	\item{There is a linear order on the set of edges.}
	\item{$\Gamma$ has no simple loops.}
\end{enumerate}
We denote by $\Gra(N,k)$ the graded vector space spanned by isomorphism classes of connected graphs satisfying the conditions above, modulo the relation $\Gamma\cong(-1)^{|\sigma|}\Gamma^{\sigma}$, where $\Gamma^\sigma$ differs from $\Gamma$ just by a permutation $\sigma\in S_k$ on the order of the edges. The degree of such a graph $\Gamma$ is given by
\begin{equation*}
\deg_{\Gra}\Gamma=-k.
\end{equation*}
\end{definition}
Set,
\begin{equation*}
\Gra(N):=\bigoplus\limits_{k\geq 0}\Gra(N,k).
\end{equation*}
The collection $\{\Gra(N)\}_{N\geq 1}$ naturally defines an operad $\Gra$ in the category of graded vector spaces. For $\Gamma\in \Gra(N)$, the $S_N$-action permutes the labels of the vertices. For $r,s\geq 1$, $\Gamma_1\in \Gra(r)$ and $\Gamma_2\in \Gra(s)$, the operadic composition $\Gamma_1\circ_j\Gamma_2\in \Gra(r+s-1)$ is given by inserting the graph $\Gamma_2$ at vertex $j$ of $\Gamma_1$ and summing over all ways of reconnecting the edges incident to vertex $j$ in $\Gamma_1$ to vertices of $\Gamma_2$. As in the case of $\icg$, we ask that the order on the set of edges of $\Gamma_1\circ_j \Gamma_2$ is such that all edges of $\Gamma_1$ come before those of $\Gamma_2$ while the respective orderings are left unaltered. Next, define,
\begin{equation*}
\GC_2:=\prod\limits_{N\geq 1} \left( \Gra(N)[2-2N]\right)^{S_N}.
\end{equation*}
The space $\GC_2$ carries the structure of a differential graded Lie algebra. The degree of a graph $\Gamma\in \GC_2$ with $k$ edges and $N$ vertices is
\begin{equation*}
\deg\Gamma=-2-k+2N.
\end{equation*}
For the Lie bracket, consider the operadic pre-Lie product on $\Gra$,
\begin{equation*}
\Gamma_1\circ\Gamma_2=\sum\limits_{j=1}^{r} \Gamma_1\circ_j\Gamma_2.
\end{equation*}
Using this, the Lie bracket on $\GC_2$ is defined on homogeneous elements via,
\begin{equation*}
[\Gamma_1,\Gamma_2]:=\Gamma_1\circ\Gamma_2-(-1)^{\deg\Gamma_1\cdot\deg\Gamma_2}\Gamma_2\circ\Gamma_1.
\end{equation*}
The differential $d$ is given by vertex splitting, where again we ask that the newly created edge is placed last in the ordering of the edges.

\begin{remark}
More generally, one defines $\mathsf{Gra}_n$ for any $n$ by setting the degree of each edge to be $1-n$. Thus, a graph $\Gamma\in \mathsf{Gra}_n(N,k)$ has degree $\deg_{\mathsf{Gra}_n}\Gamma=(1-n)k$. Also, the equivalence relation given by the ordering on the set of edges becomes $\Gamma\cong (-1)^{|\sigma|(n-1)}\Gamma^{\sigma}$. Thus, when $n$ is odd, permuting the order of the edges does not produce any signs. However, in the $n$ odd case, we additionally ask that the edges are directed. For $\Gamma\in \mathsf{Gra}_n(N,k)$, there is then a natural $S_2^k$-action given by flipping the directions of the edges. In this case, we identify a graph with an edge direction flipped with minus the original graph. Moreover, one then defines,
\begin{equation*}
\GC_n:=\begin{cases}
\prod\limits_{N\geq 1} \left( \mathsf{Gra}_n(N)[n(1-N)]\right)^{S_N} & n \text{ even,} \\
\prod\limits_{N\geq 1} \left( \mathsf{Gra}_n(N)\otimes sgn_N [n(1-N)]\right)^{S_N} & n \text{ odd}.
\end{cases}
\end{equation*}
Here, $sgn_N$ denotes the one-dimensional representation of $S_N$. We will only be interested in the $n=2$ case. For more details, we refer to (\cite{Willwacher2014}, Section 3.).
\end{remark}

\begin{remark}
There is a map 
\begin{align}\label{eq:onemap}
(-)_1:\GC_2&\rightarrow \graphs(1)\\
\nonumber\gamma&\mapsto \gamma_1
\end{align}
given by marking vertex $1$ as ``external". For $\Gamma_1\in \graphs(1)$ and $\Gamma_r\in \graphs(r)$, $r\in \mathbb{N}$, let 
\begin{equation*}
\Gamma_1\cdot \Gamma_r:=\Gamma_1\circ_1\Gamma_r-(-1)^{\deg \Gamma_1\cdot \deg \Gamma_r}\sum\limits_{j=1}^{r}{\Gamma_r\circ_j\Gamma_1.}
\end{equation*}
be an action of $\graphs(1)$ on $\graphs(r)$.
\end{remark}

\begin{lemma}\label{lemma:operad}
The action defined above satisfies the identity
\begin{equation}
\gamma\cdot (\gamma' \cdot \Gamma)-(-1)^{\deg \gamma\cdot \deg \gamma'}\gamma' \cdot (\gamma \cdot \Gamma)= (\gamma \cdot \gamma' -(-1)^{\deg \gamma\cdot \deg \gamma'}\gamma'\cdot \gamma)\cdot \Gamma
\end{equation}
for all $\gamma,\gamma' \in \graphs(1)$ and $\Gamma \in \graphs(r)$, $r\in \mathbb{N}$.
\end{lemma}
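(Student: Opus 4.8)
The plan is to recognise the operation $\gamma\cdot\Gamma$ as the restriction to unary left arguments of a single operadic commutator bracket, and then to deduce the stated identity from the graded Jacobi identity; concretely this will be carried out by a direct expansion using the two operad associativity axioms. First I would record that for $\gamma\in\graphs(1)$ the map $\Gamma\mapsto\gamma\cdot\Gamma$ is exactly $\gamma\circ_1\Gamma-(-1)^{\deg\gamma\deg\Gamma}\sum_{j}\Gamma\circ_j\gamma$, which is the bracket associated to the (right) pre-Lie product $\Gamma\circ\Gamma':=\sum_j\Gamma\circ_j\Gamma'$ on the collection $\{\graphs(r)\}_r$, restricted to a left argument of arity one. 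Since $\gamma,\gamma'$ each have a single input, $\gamma\circ\gamma'=\gamma\circ_1\gamma'$, so $\gamma\cdot\gamma'$ is precisely the commutator of two unary elements; the graded antisymmetry relation $\gamma'\cdot\gamma=-(-1)^{\deg\gamma\deg\gamma'}\gamma\cdot\gamma'$, which is immediate from the formula, is what lets me match the antisymmetrised combination appearing on the right-hand side with this commutator bracket acting on $\Gamma$. With these identifications in place the lemma is an instance of the graded Jacobi identity, and the conceptual content reduces to the fact that the Gerstenhaber pre-Lie relation for $\circ$ is a consequence of the operad axioms.

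Because the paper manipulates explicit graphs and signs, I would then give a self-contained, direct proof by expanding both sides. Applying the definition of the dot-product twice, the term $\gamma\cdot(\gamma'\cdot\Gamma)$ splits into four families according to whether each of $\gamma,\gamma'$ is attached \emph{at the root} (via $\circ_1$ into the outer factor) or \emph{at a leaf} (via $\sum_j(-)\circ_j(-)$ into $\Gamma$). Each resulting nested composition I would normalise using the two operad associativity axioms: the nested axiom $a\circ_i(b\circ_j c)=(a\circ_i b)\circ_{i+j-1}c$ to collapse $\gamma\circ_1(\Gamma\circ_j\gamma')=(\gamma\circ_1\Gamma)\circ_j\gamma'$ and $\gamma\circ_1(\gamma'\circ_1\Gamma)=(\gamma\circ_1\gamma')\circ_1\Gamma$, and the horizontal (parallel) axiom to commute two insertions made at distinct vertices of $\Gamma$. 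Each time $\gamma$ is moved past $\gamma'$ or past a factor of $\Gamma$, the Koszul sign rule must be applied.

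The crux is the cancellation. After normalising, the families in which $\gamma$ and $\gamma'$ are inserted into two \emph{distinct} vertices of $\Gamma$, together with the families in which one of them sits at the root while the other sits at a leaf, occur in the same shape in $\gamma\cdot(\gamma'\cdot\Gamma)$ and in $(-1)^{\deg\gamma\deg\gamma'}\gamma'\cdot(\gamma\cdot\Gamma)$; under the antisymmetrised difference these cancel after the relabelling $i\leftrightarrow j$ of the two insertion points, provided the signs line up. The only surviving contributions are those in which $\gamma$ and $\gamma'$ are composed \emph{with one another} — stacked at the root as $(\gamma\circ_1\gamma')\circ_1\Gamma$, or inserted into one and the same vertex $j$ of $\Gamma$ as the terms $\Gamma\circ_j(\gamma'\circ_1\gamma)$ and their companions. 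Using the antisymmetry relation noted above, these reassemble exactly into $(\gamma\cdot\gamma'-(-1)^{\deg\gamma\deg\gamma'}\gamma'\cdot\gamma)\cdot\Gamma$, the right-hand side.

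I expect the main obstacle to be purely the sign bookkeeping: tracking the Koszul signs produced by each transposition of $\gamma$ with $\gamma'$ and with the factors of $\Gamma$, together with the index shifts forced by the associativity axioms, so that the cross terms cancel with the correct signs and the surviving terms carry precisely the sign of the dot-product. This is the same kind of careful but elementary computation that the paper elsewhere calls ``tedious'', and all of the structural content is already contained in the identification of $\gamma\cdot(-)$ with the operadic commutator and the resulting Jacobi identity.
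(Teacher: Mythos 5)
The paper offers no proof of this lemma at all: it is stated bare, and the remark immediately following it defers to Dolgushev--Rogers (Section 6.1) with the comment that the identity holds for any operad and that ``its proof is a simple computation.'' Your proposal therefore supplies an argument where the paper supplies only a citation, and the argument you outline --- recognising $\gamma\cdot(-)$ as the commutator bracket of the operadic pre-Lie product $\sum_j(-)\circ_j(-)$ restricted to unary left arguments, then expanding both sides via the nested and parallel associativity axioms and cancelling the families of terms in which $\gamma$ and $\gamma'$ are attached at distinct insertion points --- is exactly the computation the cited reference performs. The cancellation pattern you describe (mixed root/leaf terms and distinct-vertex double sums cancel under the antisymmetrised difference; only the stacked-at-the-root and same-vertex terms survive) is correct.

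One step would fail as written, namely the final ``reassembly.'' Carrying out your expansion, the surviving terms are $\bigl(\gamma\circ_1\gamma'-(-1)^{\deg\gamma\deg\gamma'}\gamma'\circ_1\gamma\bigr)\circ_1\Gamma$ together with $-(-1)^{(\deg\gamma+\deg\gamma')\deg\Gamma}\sum_j\Gamma\circ_j\bigl(\gamma\circ_1\gamma'-(-1)^{\deg\gamma\deg\gamma'}\gamma'\circ_1\gamma\bigr)$; that is, the left-hand side equals $[\gamma,\gamma']\cdot\Gamma$, where $[\gamma,\gamma']$ is the commutator of the \emph{associative} product $\circ_1$ on $\graphs(1)$. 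You, however, read $\gamma\cdot\gamma'$ on the right-hand side as the dot action of $\graphs(1)$ on itself, which for unary arguments is already this commutator; the antisymmetry $\gamma'\cdot\gamma=-(-1)^{\deg\gamma\deg\gamma'}\gamma\cdot\gamma'$ then makes the bracketed combination equal to $2[\gamma,\gamma']$, so your ``exact reassembly'' into the stated right-hand side is off by a factor of $2$ (already visible in the toy case $r=1$, where the claim would read $[\gamma,[\gamma',\Gamma]]-\pm[\gamma',[\gamma,\Gamma]]=2[[\gamma,\gamma'],\Gamma]$). The identity is literally correct only if $\gamma\cdot\gamma'$ inside the right-hand side denotes the operadic composition $\gamma\circ_1\gamma'$, i.e.\ the graded algebra product on $\graphs(1)$ referred to in the remark after the lemma. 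Either adopt that reading explicitly, or state the conclusion of your computation as $[\gamma,\gamma']\cdot\Gamma$ without invoking antisymmetry to antisymmetrise a bracket that is already antisymmetric; as it stands, the last sentence of your third paragraph asserts an equality that your own expansion contradicts.
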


\begin{remark}
Note that for any operad in the category of cochain complexes $\mop$, $\mop(1)$ together with the operadic composition forms a graded algebra. Moreover, $\mop(1)$ acts on $\mop$ via
\begin{equation*}
a\cdot b := a \circ_1 b - (-1)^{\deg a\cdot \deg b}\sum\limits_{j=1}^r {b\circ_j a}
\end{equation*}
for any $r\in \mathbb{N}$. The identity in Lemma \ref{lemma:operad} holds also in this case. Its proof is a simple computation and we refer to (\cite{dolgushev2012}, Section 6.1.).
\end{remark}

Let $r\in\mathbb{N}$. Following \cite{Willwacher2014}, we define an action of $\GC_2$ on $\graphs(r)$ by 
\begin{equation*}
\gamma \bullet \Gamma:=\gamma_1\cdot \Gamma +\sum\limits_{v}\Gamma \circ_v \gamma=\Gamma_1\circ_1\Gamma_r-(-1)^{\deg \Gamma_1\cdot \deg\Gamma_r}\sum\limits_{j=1}^{r}{\Gamma_r\circ_j\Gamma_1+\sum\limits_{v}\Gamma \circ_v \gamma},
\end{equation*}
for $\gamma\in \GC_2$ and $\Gamma\in \graphs(r)$. The composition $\Gamma\circ_v \gamma$ is constructed by ``inserting" $\gamma$ into the internal vertex $v$ in $\Gamma$ and summing over all ways of reconnecting edges incident to $v$ to vertices of $\gamma$. This action is compatible with the differentials on $\graphs$ and $\GC_2$, i.e. 
\begin{equation*}
d(\gamma \bullet \Gamma)=(d\gamma_1)\cdot \Gamma+\gamma_1\cdot (d\Gamma) +\sum\limits_{v}(d\Gamma) \circ_v \gamma+\sum\limits_{v}\Gamma \circ_v (d\gamma).
\end{equation*}
\begin{remark}\label{remark:irreducible}
Denote by $\GC_2^{1-vi}$ the subcomplex of $(\GC_2,d)$ spanned by 1-vertex irreducible graphs (that is graphs which remain connected after deletion of any of its vertex). As shown in \cite{Conant2005}, the subcomplex $\GC_2^{1-vi}$ is quasi-isomorphic to $\GC_2$. Also, note that the map $(-)_1$ restricted to $\GC_2^{1-vi}$ maps to internally connected graphs $\icg(1)$. 
\end{remark}


\begin{remark}\label{remark:C}
Denote by $(C,d)$ the subcomplex of $(\icg(1),d)$ spanned by graphs having only one edge incident to the unique external vertex. 
It follows from (\cite{Willwacher2014}, Proposition 6.13.) that
\begin{equation*}
H^0(\GC_2,d)\cong H^2(C,d).
\end{equation*}
On the level of the corresponding cochain complexes, the map inducing this isomorphism has the simple combinatorial form \cite{Willwacher2014}
\begin{align*}
F: \GC_2^{1-vi}&\rightarrow  C\\
\Gamma&\mapsto (\underset{1}{\circ} \text{---} \underset{2}{\circ}) \circ_2 \Gamma.
\end{align*}
It preserves the number of loops and thus if we denote by $H^2(C,d)^{(l)}$ and $H^0(\GC_2,d)^{(l)}$ the $l$-loop parts, we still have an isomorphism
\begin{equation}
H^0(\GC_2,d)^{(l)}\cong H^2(C,d)^{(l)}
\end{equation}
for all $l\geq 1$. In particular, we have the following.
\end{remark}

\begin{lemma}\label{lemma:loops}
For $l\geq 1$, given $Z\in\icg(1)$ satisfying 
\begin{equation*}
Z \mod l+1 \text{ loops } \in C\text{, } \deg(Z)=2\text{, } dZ=0\mod l+1 \text{ loops,}
\end{equation*}
there exist a $Z'\in C$ and a (1-vertex irreducible) $\Gamma\in H^0(\GC_2)$ such that $Z+dZ' =(\underset{1}{\circ} \text{---} \underset{2}{\circ}) \circ_2 \Gamma \mod l+1$ loops.
\end{lemma}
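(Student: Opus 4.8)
The plan is to dismantle the statement one internal-loop order at a time and reduce it to the loop-graded isomorphism of Remark \ref{remark:C}. Write $Z=\sum_{j\geq 0}Z^{(j)}$, where $Z^{(j)}$ collects the terms of $Z$ with exactly $j$ internal loops; since $\deg(Z)=2$, every $Z^{(j)}$ has degree $2$. The hypothesis $Z\bmod l+1\in C$ means exactly that $Z^{(j)}\in C$ for all $j\leq l$. The key preliminary observation I would make is that the external-vertex-splitting part of $d$ annihilates $C$: the unique external vertex of a graph in $C$ is univalent, and splitting a univalent external vertex produces a new internal vertex of valence $2$ no matter how the single loose edge is reconnected, hence only inadmissible graphs arise. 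Consequently $d|_C=d_0|_C$, so $(C,d)$ is genuinely \emph{graded} (not merely filtered) by the internal loop number.

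With this in hand I would show that the closedness hypothesis decouples across loop orders. For $j\leq l$, the $j$-loop part of $dZ$ is $\sum_{i\geq 0}d_iZ^{(j-i)}$, and every term with $i\geq 1$ arises by splitting an external vertex of $Z^{(j-i)}\in C$, so it vanishes by the previous observation; what remains is $(dZ)^{(j)}=d_0Z^{(j)}$. (Terms coming from $Z^{(j')}$ with $j'>l$ cannot contribute to loop orders $\leq l$, so the part of $Z$ outside $C$ is irrelevant here.) Therefore $dZ=0\bmod l+1$ is equivalent to $d_0Z^{(j)}=0$ for $j=0,\dots,l$, and each $Z^{(j)}$ is a degree-$2$, $d_0$-closed (hence $d$-closed) element of the $j$-loop part of $C$, defining a class in $H^2(C,d)^{(j)}$. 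Note that $Z^{(0)}=0$, since a degree-$2$ graph in $C$ must contain at least one internal loop.

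Next I would feed each class into the loop-graded isomorphism $H^0(\GC_2)^{(j)}\cong H^2(C,d)^{(j)}$ induced by $F$ (Remark \ref{remark:C}, valid for $j\geq 1$). For each $1\leq j\leq l$ this yields a $1$-vertex irreducible degree-$0$ cocycle $\Gamma_j\in\GC_2$ with exactly $j$ loops, together with an element $Z'_j$ of the $j$-loop part of $C$, such that $Z^{(j)}=F(\Gamma_j)+dZ'_j$ with $F(\Gamma_j)=(\underset{1}{\circ}\text{---}\underset{2}{\circ})\circ_2\Gamma_j$. Putting $\Gamma:=\sum_{j=1}^{l}\Gamma_j$, a degree-$0$, $1$-vertex irreducible cocycle, and $Z':=-\sum_{j=1}^{l}Z'_j\in C$, the linearity of $F$ gives $Z+dZ'=F(\Gamma)\bmod l+1$ loops, which is the assertion.

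The step I expect to be the crux is the first one: recognizing that external splitting kills $C$, so that on $C$ the full differential collapses to $d_0$. This is precisely what converts the a priori \emph{filtered} condition $dZ=0\bmod l+1$ into a \emph{loop-homogeneous} system of equations and allows the single cohomology class of $Z$ to be rebuilt as a sum of loop-homogeneous $\GC_2$-classes; without it one would be forced to analyse the loop-filtration spectral sequence of $C$ and argue its degeneration. The rest is bookkeeping: verifying that the loop-order truncation is compatible with $d$, that higher-loop terms do not leak into orders $\leq l$, and that the corrections $Z'_j$ and preimages $\Gamma_j$ can be taken loop-homogeneous so as to assemble into a single $Z'$ and $\Gamma$.
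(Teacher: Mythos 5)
Your proposal is correct and follows essentially the same route as the paper: reduce to the loop-graded isomorphism $H^0(\GC_2,d)^{(k)}\cong H^2(C,d)^{(k)}$ of Remark \ref{remark:C} and read off $\Gamma$ and $Z'$. The paper simply asserts that the hypotheses make $Z$ a class in $\bigoplus_{k=1}^{l}H^2(C,d)^{(k)}$, whereas you supply the (correct) justification that external-vertex splitting annihilates $C$, so the closedness condition decouples loop order by loop order.
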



\begin{proof}
The conditions on $Z$ imply that it represents a cohomology class in $\bigoplus\limits_{k=1}^{l}{H^2(C,d)^{(k)}}.$ This class corresponds to the class of some $\Gamma\in \GC_2$ of degree $0$ in $\bigoplus\limits_{k=0}^{l}{H^0(\GC_2,d)^{(k)}}\subset H^0(\GC_2,d)$ under the isomorphism which sends $\Gamma$ to $(\underset{1}{\circ} \text{---} \underset{2}{\circ}) \circ_2 \Gamma$. Therefore, there must be some $Z' \in C$ such that $Z+dZ'=(\underset{1}{\circ} \text{---} \underset{2}{\circ}) \circ_2 \Gamma \mod l+1$ loops. By Remark \ref{remark:irreducible}, we may assume that $\Gamma$ is 1-vertex irreducible.
\end{proof}

\begin{lemma}\label{lemma:kerdelta}
It is true that $\ker(\delta:\icg(1)\rightarrow \icg(2))=C$.
\end{lemma}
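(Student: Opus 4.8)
The plan is to introduce an auxiliary \emph{merging map} $\mu\colon \icg(2)\rightarrow \icg(1)$ which identifies the two external vertices into a single external vertex (declaring the result zero whenever this creates a double edge or a simple loop), and then to compute the composite $\mu\circ\delta$ explicitly. Grade $\icg(1)$ by the valence $m$ of its unique external vertex (this is a genuine direct sum decomposition, since valence is preserved by the edge-ordering sign relation). The key claim is that $\mu\delta$ acts as the scalar $2-2^m$ on the valence-$m$ part. Granting this, both inclusions drop out: since $2-2^m=0$ if and only if $m=1$, the operator $\mu\delta$ is diagonal with kernel exactly the valence-$1$ subspace, which by definition is $C$; hence $\ker\delta\subseteq\ker(\mu\delta)=C$.

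To establish the eigenvalue identity I would unwind $\delta=\delta_0-\delta_1+\delta_2$ on a single graph $\Gamma$ of external valence $m$. The maps $\delta_0$ and $\delta_2$ each merely adjoin an isolated external vertex and relabel, so merging undoes them: $\mu\delta_0\Gamma=\mu\delta_2\Gamma=\Gamma$. The map $\delta_1$ splits the external vertex into two external vertices and sums over all $2^m$ distributions of the incident edges; because these edges run to distinct internal vertices, in each resulting graph the two external vertices attach to \emph{disjoint} internal neighborhoods, so every one of the $2^m$ summands merges back cleanly (no double edge) to $\Gamma$, giving $\mu\delta_1\Gamma=2^m\,\Gamma$. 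Summing yields $\mu\delta\Gamma=(2-2^m)\Gamma$. The only bookkeeping to verify is that the linear order on edges, and hence the signs of the relation $\Gamma^\sigma=(-1)^{|\sigma|}\Gamma$, is never disturbed: neither splitting, relabeling, nor merging permutes the edges, so the identity holds on the nose.

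For the reverse inclusion $C\subseteq\ker\delta$ I would argue directly on a valence-$1$ graph $\Gamma$. Its single external edge can be routed to only one of the two new external vertices, so the two summands of $\delta_1\Gamma$ are precisely $\delta_0\Gamma$ (edge to the second vertex, first isolated) and $\delta_2\Gamma$ (edge to the first vertex, second isolated). Thus $\delta_1\Gamma=\delta_0\Gamma+\delta_2\Gamma$, whence $\delta\Gamma=\delta_0\Gamma-(\delta_0\Gamma+\delta_2\Gamma)+\delta_2\Gamma=0$. Together with the previous step this gives $C\subseteq\ker\delta\subseteq C$, i.e. equality.

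I expect the main obstacle to be the careful verification that $\mu$ is a well-defined linear map compatible with the sign conventions --- in particular that the zero-convention for double edges and simple loops is consistent, and that collapsing the $2^m$ terms of $\delta_1\Gamma$ introduces no signs. A secondary point worth flagging explicitly is that valence $0$ is genuinely \emph{not} in the kernel: there $\mu\delta$ has eigenvalue $2-2^0=1\neq 0$, which is exactly why $C$ is the valence-$1$ part rather than the valence-$\leq 1$ part.
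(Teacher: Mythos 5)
Your proof is correct and follows essentially the same route as the paper: the paper also defines a merging map $\Delta:\icg(2)\rightarrow\icg(1)$, applies it to the identity $\delta_0 f+\delta_2 f=\delta_1 f$ for $f\in\ker\delta$ of external valence $k$ to get $2^k f=2f$, hence $k=1$, and handles $C\subseteq\ker\delta$ by the same direct graphical cancellation $\delta_1\Gamma=\delta_0\Gamma+\delta_2\Gamma$. Your version merely spells out the bookkeeping (eigenvalue $2-2^m$ on the valence-$m$ part, absence of double edges after merging) that the paper leaves implicit.
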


\begin{proof}
An easy graphical calculation shows that $C \subset \ker\delta$. For the other inclusion, let $f\in \ker \delta$, and let the external vertex be of valence $k$. Then, $\delta_0 f+\delta_2 f=\delta_1 f$. Define a linear map $\Delta:\icg(2)\rightarrow \icg(1)$ given by simply merging the two external vertices into one (and keeping all incident edges). Applying this map to our equation yields, $2^k f=2 f$. This implies $k=1$, and thus $f\in C$.
\end{proof}

\begin{lemma}\label{lemma:crucial}
Fix $k\geq 1$. Let $x\in \hkrvk_2$. By definition, there exists an $X\in \icg(2)$ such that $[X]=x$ and $dX=\delta Y \mod k+1$ internal loops for some $Y\in \icg(1)$ . Denote by $(-)_1: \GC_2 \rightarrow \graphs(1)$ the map defined in equation \eqref{eq:onemap}. It is given by marking vertex $1$ as ``external". In this setting, there exist an $X'\in \icg(2)$ and a $\Gamma\in \GC_2$ (1-vertex irreducible, of degree $0$ and satisfying $d\Gamma=0$) such that
\begin{align*}
[X']=&x\\
dX'=&\delta (\Gamma)_1 \mod k+1 \text{ internal loops}.
\end{align*}
\end{lemma}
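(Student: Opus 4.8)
The plan is to differentiate the defining relation, feed the outcome into Lemma~\ref{lemma:loops} to extract a graph cocycle $\Gamma$, and finally trade the given $Y$ for $(\Gamma)_1$ at the cost of modifying $X$ by a term of internal loop order $\geq 1$, which leaves the tree part untouched.

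First I would apply $d$ to the relation $dX=\delta Y \bmod k+1$ loops. Each $\delta_j$ is a strict $L_\infty$-morphism, hence a chain map, so $d\delta=\delta d$; combined with $d^2=0$ and the fact that $d$ preserves the loop filtration $\mf^\bullet$, this yields $\delta(dY)=0 \bmod k+1$ loops. Since $\delta$ preserves the number of internal loops and $\ker\delta=C$ by Lemma~\ref{lemma:kerdelta}, I conclude $dY \in C \bmod k+1$ loops.

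Next, set $Z:=dY$. Then $\deg Z=\deg Y+1=2$, $dZ=d^2Y=0$, and $Z \bmod k+1$ loops lies in $C$, so $Z$ meets the hypotheses of Lemma~\ref{lemma:loops} with $l=k$. That lemma provides a $Z'\in C$ and a $1$-vertex irreducible, degree $0$, closed $\Gamma\in\GC_2$ with $d(Y+Z')=F(\Gamma)=(\underset{1}{\circ}\text{---}\underset{2}{\circ})\circ_2\Gamma \bmod k+1$ loops, in the notation of Remark~\ref{remark:C}. Because $Z'\in C=\ker\delta$, replacing $Y$ by $\tilde Y:=Y+Z'$ does not alter $\delta Y$, so we may assume from now on that $dY=F(\Gamma)\bmod k+1$ loops while still $dX=\delta Y\bmod k+1$ loops and $[X]=x$.

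The crux is the passage from $Y$ to $(\Gamma)_1$, which I would carry out by comparing the two elements of $\icg(1)$ through their differentials. Since $(-)_1$ marks vertex $1$ as external whereas the $\GC_2$-differential splits it internally, and since $d\Gamma=0$, one rewrites $d(\Gamma)_1$ as the external splitting of vertex $1$ minus its internal splitting; the contributions in which the external vertex retains two or more of its edges cancel, and what survives lies in $C$ and, as with $F(\Gamma)$, represents the corresponding class in $H^2(C,d)\cong H^0(\GC_2,d)$. Hence $d(Y-(\Gamma)_1)\in C \bmod k+1$ loops, so $\delta(Y-(\Gamma)_1)$ is $d$-closed modulo $k+1$ loops; moreover one checks that $Y$ and $(\Gamma)_1$ carry the same tree part, so $Y-(\Gamma)_1\in\mf^1$ and therefore $\delta(Y-(\Gamma)_1)\in\mf^1$. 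Feeding this into the filtration-respecting homotopy $h$ of Section~\ref{section:preliminaries} produces a tree-free primitive $W\in\mf^1\icg(2)$ with $dW=\delta(Y-(\Gamma)_1)\bmod k+1$ loops, and setting $X':=X-W$ gives $dX'=\delta(\Gamma)_1\bmod k+1$ loops with $[X']=[X]-[W]=x$. I expect the main obstacle to be exactly this last comparison: establishing the cancellation of the higher-valence terms so that $d(\Gamma)_1$ is $C$-valued and cohomologous to $F(\Gamma)$, and verifying that the two tree parts coincide so that the correcting primitive $W$ may be chosen in $\mf^1$ — this is what guarantees $[X']=x$ rather than a solution with an uncontrolled tree part.
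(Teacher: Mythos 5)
Your first two steps coincide with the paper's: from $d^2X=0$ you get $\delta(dY)=0$ modulo $k+1$ loops, Lemma~\ref{lemma:kerdelta} puts $dY$ in $C$, and Lemma~\ref{lemma:loops} produces $Z'\in C$ and a closed, degree-zero, $1$-vertex irreducible $\Gamma$ with $d(Y+Z')=F(\Gamma)$ modulo $k+1$ loops; since $Z'\in\ker\delta$ the replacement $Y\mapsto Y+Z'$ is harmless. The divergence, and the gap, is in the final exchange of $Y$ for $(\Gamma)_1$. The paper uses the exact identity $F(\Gamma)=d(\Gamma)_1-(d\Gamma)_1$ (equation~\eqref{eq:F}), so that $d(Y'-(\Gamma)_1)=0$ modulo $k+1$ loops \emph{on the nose}; then $H^1(\icg(1),d)=0$ gives a primitive $W\in\icg(1)$ of degree $0$ for $Y'-(\Gamma)_1$ itself, and a valence count shows that degree-zero elements of $\icg(1)$ have no loop-order-zero part, so $X':=X+\delta W$ automatically has tree part $x$. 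You instead only argue that $d(\Gamma)_1$ lies in $C$ and is \emph{cohomologous} to $F(\Gamma)$ in $H^2(C,d)$, and then look for a primitive $W\in\icg(2)$ of $\delta(Y-(\Gamma)_1)$.

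This last step does not go through as stated. Degree-zero, loop-order-zero elements of $\icg(2)$ are exactly the internally trivalent trees, i.e.\ all of $\sder_2$; so a primitive $W$ of $\delta(Y-(\Gamma)_1)$ obtained from $H^1(\icg(2),d)=0$ can perfectly well carry a nontrivial tree part $w_0$ (one only gets $d_0w_0=0$, which does not force $w_0=0$, and one cannot discard $w_0$ since $dw_0\neq 0$ in general). Your appeal to the ``filtration-respecting homotopy'' does not repair this: the homotopy identity is $\id-\pi=dh+hd$, and $\pi u$ need not vanish on your closed element $u=\delta(Y-(\Gamma)_1)$, so $h u$ is not a primitive of $u$; more structurally, $H^1(\mf^1\icg(2),d)\neq 0$ (the connecting map from $H^0$ of the tree quotient, which is all of $\sder_2$, lands there), so a closed degree-one element of $\mf^1\icg(2)$ need not be exact \emph{within} $\mf^1\icg(2)$. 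The clean way out is the paper's: first upgrade ``cohomologous in $H^2(C,d)$'' to an actual equality $dY'=d(\Gamma)_1$ by absorbing one more $C$-valued correction into $Y$ (or by quoting equation~\eqref{eq:F} directly), then solve $Y'-(\Gamma)_1=dW$ in $\icg(1)$, where the tree-freeness of $W$ is automatic, and only afterwards apply $\delta$.
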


\begin{proof}
It follows from Lemma \ref{lemma:loops} that there is a $Y'\in \icg(1)$ satisfying $dX=\delta Y'\mod k+1$ internal loops and a 1-vertex irreducible $\Gamma \in \GC_2$ such that $d\Gamma=0$ and $dY'=(\underset{1}{\circ} \text{---} \underset{2}{\circ}) \circ_2 \Gamma \mod k+1$ internal loops. To see this, note that the equation $dX=\delta Y \mod k+1$ loops implies in particular via
\begin{equation*}
	0=d^2X=d\delta Y=-\delta dY \mod k+1 \text{ internal loops }
\end{equation*}	
that $dY\mod k+1 \text{ loops }=:Z$ is in $\ker(\delta)$. Lemma \ref{lemma:kerdelta} implies $Z  \mod k+1 \text{ loops }=Z \in C$. Moreover, we have $\deg(Z)=2$ and $dZ=0 \mod k+1$ loops. By Lemma \ref{lemma:loops} there exists a $\Gamma \in \GC_2$ of degree $0$ such that $d\Gamma=0$ and a $Z'\in C$ such that $Z+dZ'=(\underset{1}{\circ} \text{---} \underset{2}{\circ}) \circ_2 \Gamma \mod k+1$ loops. Set
\begin{equation*}
Y':=Y+Z'.
\end{equation*}
It satisfies $\delta Y'=\delta Y+\delta Z'=\delta Y=d X \mod k+1$ loops as $\delta Z'=0$. Also, $dY'=dY +dZ'=Z+dZ'=(\underset{1}{\circ} \text{---} \underset{2}{\circ}) \circ_2 \Gamma \mod k+1\text{ loops }=F(\Gamma)\mod k+1 \text{ loops}$. 

Next, note that (\cite{Tamarkin1998}, \cite{Willwacher2014} section 6.4.)
\begin{equation}\label{eq:F}
F(\Gamma)=d(\Gamma)_1-(d\Gamma)_1.
\end{equation}
As $d\Gamma=0$, we have $F(\Gamma)=d(\Gamma)_1$, and since modulo $k+1$ loops, $F(\Gamma)=dY'$, we obtain
\begin{equation*}
d(Y'-(\Gamma)_1)=0 \mod k+1 \text{ loops}.
\end{equation*}
In \cite{Severawillwacher2011}, it is proven that $\mathfrak{t}_n\cong H(\icg(n),d)$ holds for all $n\in \mathbb{N}$. The isomorphism is given by mapping generators $t^{i,j}$ to graphs with no internal vertex and one edge connencting the external vertices $i$ and $j$. In particular, this implies $H^k(\icg(n))=0$ for $k\neq 0$. Therefore, as $H^1(\icg(1))=0$ and $Y'-(\Gamma)_1\in \ker(d)=\im(d)$, there is a $W\in \icg(1)$ of degree $0$ such that
\begin{equation*}
Y'-(\Gamma)_1=dW \mod k+1 \text{ loops}.
\end{equation*}
Because of degree reasons, $W$ will not have a tree part. At this point, set
\begin{equation*}
X':=X+\delta W.
\end{equation*}
It does indeed satisfy the required relations. As $W$ does not contribute to the tree part, clearly $[X']=x$. Moreover, everything modulo $k+1$ loops,
\begin{equation*}
dX'=dX+d\delta W=\delta Y'+d \delta W=\delta (\Gamma)_1+\delta d W +d \delta W=\delta (\Gamma)_1
\end{equation*}
as $\delta dW =-d\delta W$.
\end{proof}

\begin{remark}
The condition for $\Gamma$ to be 1-vertex irreducible ensures that $(\Gamma)_1$ is internally connected.
\end{remark}

\begin{remark}\label{remark:delta}
For $\Gamma \in \graphs(1)$,
\begin{equation*}
\Gamma\cdot (\circ \hspace{0.3cm} \circ)= -\delta\Gamma.
\end{equation*}
Additionally, for $\gamma \in \GC_2$
\begin{equation*}
\sum\limits_{v}{ (\Gamma \circ_v \gamma)\cdot (\circ \hspace{0.3cm} \circ)}=\sum\limits_{v}{(\Gamma\cdot (\circ \hspace{0.3cm} \circ))\circ_v \gamma}
\end{equation*}
where the sum runs over internal vertices of $\Gamma$.
\end{remark}



\begin{proof}[Proof of Theorem \ref{Thm:krvhat}]
Fix $k\geq 1$. Let $x_1,x_2\in \hkrvk_2$. By Lemma \ref{lemma:crucial}, there exist $X_1, X_2 \in \icg(2)$ and $\Gamma_1, \Gamma_2 \in \GC_2$ (1-vertex irreducible, of degree $0$ and satisfying $d\Gamma_1=d\Gamma_2=0$) such that for $i=1,2$
\begin{enumerate}[label=(\roman*)]
\item{$[X_i]=x_i$}
\item{$dX_i=\delta (\Gamma_i)_1 \mod k+1 \text{ internal loops}$.}
\end{enumerate}
We need to find an $X\in \icg(2)$ which extends the bracket $[x_1,x_2]_{Ih}$ and a $Y\in \icg(1)$ such that $dX=\delta Y \mod k+1 \text{ internal loops}$.
As an extension of $[x_1,x_2]_{Ih}$ we suggest the element
\begin{equation}
X:=\Gamma_1\bullet X_2 - \Gamma_2 \bullet X_1+d (X_1\wedge X_2)\mod k+1 \text{ internal loops} \in \graphs(2).
\end{equation}
The notation $X_1\wedge X_2$ means that we identify the corresponding external vertices. The edges of the new graph are ordered by preserving their order in $X_1$ and $X_2$ and by $e_1<e_2$ whenever $e_1$ is an edge of $X_1$ and $e_2$ is an edge of $X_2$. Remark that a priori $X$ might not be internally connected. It is a linear combination of graphs containing at most $k$ loops. The higher loop part is set to zero.
There are several things to check.

\begin{enumerate}[label=(\roman*)]
\item{$[X]=[x_1,x_2]_{Ih}$: The tree part of $X$ comes only from $d(X_1\wedge X_2)$ as $\Gamma_1\bullet X_2$ and $\Gamma_2\bullet X_1$ both contain loops. Moreover, this tree part exactly coincides with the bracket $[x_1,x_2]_{Ih}$ which is given by gluing $x_1$ and $x_2$ (the tree parts of $X_1$ and $X_2$) at the corresponding external vertices, applying the differential and only keeping the loop-free internally connected graphs.}
\item{$dX=\delta Y \mod k+1 \text{ internal loops}$: The differential is compatible with the action of $\GC_2$ on $\graphs(2)$. Therefore, everything modulo $k+1$ internal loops,
\begin{align*}
dX=&\underbrace{(d\Gamma_1)}_{=0} \bullet X_2+ \Gamma_1\bullet \underbrace{(dX_2)}_{=\delta (\Gamma_2)_1}-\underbrace{(d\Gamma_2)}_{=0 } \bullet X_1-\Gamma_2\bullet \underbrace{(dX_1)}_{=\delta (\Gamma_1)_1}\\
=&\Gamma_1\bullet (\delta (\Gamma_2)_1)-\Gamma_2\bullet (\delta (\Gamma_1)_1)\\ 
=&-\Gamma_1\bullet ((\Gamma_2)_1\cdot (\circ \hspace{0.3cm} \circ))+\Gamma_2\bullet ((\Gamma_1)_1 \cdot (\circ \hspace{0.3cm} \circ))\\
=&-(\Gamma_1)_1\cdot ((\Gamma_2)_1 \cdot (\circ \hspace{0.3cm} \circ))+(\Gamma_2)_1\cdot ((\Gamma_1)_1\cdot (\circ \hspace{0.3cm} \circ))\\
-&\sum\limits_{v}{((\Gamma_2)_1\cdot (\circ \hspace{0.3cm} \circ))\circ_v \Gamma_1}+\sum\limits_{v'}{((\Gamma_1)_1\cdot (\circ \hspace{0.3cm} \circ))\circ_{v'} \Gamma_2}.
\end{align*}
Remark \ref{remark:delta} above, together with Lemma \ref{lemma:operad} enable us to write this as
\begin{align*}
=&(-(\Gamma_1)_1\cdot (\Gamma_2)_1) \cdot (\circ \hspace{0.3cm} \circ)+((\Gamma_2)_1\cdot (\Gamma_1)_1)\cdot (\circ \hspace{0.3cm} \circ)\\
-&\sum\limits_{v}{((\Gamma_2)_1\circ_v \Gamma_1)\cdot (\circ \hspace{0.3cm} \circ)}+\sum\limits_{v'}{((\Gamma_1)_1\circ_{v'} \Gamma_2)\cdot (\circ \hspace{0.3cm} \circ)}\\
=&\underbrace{(\Gamma_2\bullet (\Gamma_1)_1- \Gamma_1 \bullet (\Gamma_2)_1)}_{=:-Y}\cdot (\circ \hspace{0.3cm} \circ)\\
=&\delta Y.
\end{align*}
}
\item{$X\in \icg(2)$: Denote by $k_i$ the number of edges of $X_i$. Remark that the signs in the wedge product $\wedge$ behave as follows,
\begin{equation*}
X_1\wedge X_2=(-1)^{k_1 k_2}X_2\wedge X_1.
\end{equation*}
As $0=\deg(X_i)=1-k_i+2\#\text{internal vertices}$, we have that $k_i=2\#\text{internal vertices}+1$ is odd. Therefore,
\begin{equation*}
X_1\wedge X_2=-X_2\wedge X_1.
\end{equation*}
We find that the non-internally connected part of $\Gamma_1\bullet X_2$ is 
\begin{equation*}
-(\delta (\Gamma_1)_1)\wedge X_2.
\end{equation*}
To see this, consider,
\begin{equation*}
\Gamma_1\bullet X_2=(\Gamma_1)_1\circ_1 X_2-\sum\limits_{j=1}^{2}{X_2\circ_j(\Gamma_1)_1+\sum\limits_{v}X_2 \circ_v \Gamma_1}.
\end{equation*}
The last sum will consist of internally connected graphs since we insert $\Gamma_1\in \GC_2$ into the internal vertices of $X_2$. When $X_2$ is inserted in the unique external vertex of $(\Gamma_1)_1$, the non-internally connected terms will arise when the edges of $(\Gamma_1)_1$ which were previously connected to the external vertex are distributed on the two external vertices. This corresponds to the expression $\delta_1 (\Gamma_1)_1\wedge X_2$. On the other hand, when $(\Gamma_1)_1$ is inserted in the first external vertex of $X_2$ we find the non-internally connected graphs by connecting all edges of $X_2$ previously connected to external vertex 1 to the unique external vertex of $(\Gamma_1)_1$. This yields $X_2\wedge \delta_0 (\Gamma_1)_1$. Similarly, we obtain $X_2\wedge \delta_2 (\Gamma_1)$ when considering the second external vertex of $X_2$. Moreover, since $\Gamma_1$ is of degree zero in $\GC_2$, all of $\Gamma_1$, $(\Gamma_1)_1$ and $\delta_i (\Gamma_1)_1$ will have an even number of edges, and thus $X_2\wedge \delta_i (\Gamma_1)_1=\delta_i (\Gamma_1)_1 \wedge X_2$. These three terms together give the claim above. For a more schematic explanation, see Figures \ref{Fig:nonconn1} and \ref{Fig:nonconn2}.
\begin {center}
\begin{figure}[ht]
\resizebox{!}{4.5cm}{
\begin {tikzpicture}[-latex, auto ,node distance =1cm and 1cm ,on grid ,
semithick];

\tikzstyle{vertex1}=[circle,minimum size=2mm,draw=black,fill=black];
\tikzstyle{vertex2}=[circle,minimum size=2mm,draw=black,fill=white];
\tikzset{edge/.style = {-}};


\node[vertex2] (1) [xshift=-1cm]{$1$};
\node[vertex2] (2) [right=of 1,xshift=1cm] {$2$};

\node (3) [above=of 1,xshift=-0.5cm,yshift=1.13cm] {};
\node (4) [right=of 3] {};
\node (5) [right=of 4] {};
\node (6) [right=of 5] {};

\draw[edge] (3) to (1);
\draw[edge] (4) to (1);
\draw[edge] (5) to (1);
\draw[edge] (6) to (2);
\draw[edge] (5) to (2);

\draw[edge] (-2,2) to (2,2);
\draw[edge] (-2,2) to (-2,4);
\draw[edge] (-2,4) to (2,4);
\draw[edge] (2,2) to (2,4);

\node (7) [above=of 1,xshift=1cm,yshift=2cm] {$\delta_1(\Gamma_1)_1$};

\draw[edge] (-1,-1.5) to (1,-1.5);
\draw[edge] (-1,-2.5) to (-1,-1.5);
\draw[edge] (-1,-2.5) to (1,-2.5);
\draw[edge] (1,-1.5) to (1,-2.5);

\node (8) [below=of 1,xshift=0.25cm,yshift=-0.63cm] {};
\node (9) [right=of 8,xshift=-0.25cm] {};
\node (10) [right=of 9,xshift=-0.25cm] {};

\draw[edge] (8) to  (1);
\draw[edge] (9) to  (1);
\draw[edge] (9) to  (2);
\draw[edge] (10) to  (2);

\node (11) [below=of 1,xshift=1cm,yshift=-1cm] {$X_2$};

\draw[dashed] (0,-1.5) circle (2.3cm);


\node[vertex2] [dashed](12) [left=of 1,xshift=-7cm]{$1$};
\node[vertex2] [dashed] (13) [right=of 12,xshift=1cm] {$2$};

\draw (-8,-1.5) circle (2.3cm);

\node (14) [above=of 1,xshift=-8.5cm,yshift=1.13cm] {};
\node (15) [right=of 14] {};
\node (16) [right=of 15] {};
\node (17) [right=of 15,xshift=0.5cm] {};
\node (18) [right=of 16] {};

\node (19) [right=of 12,yshift=0.67cm] {};

\draw[edge] (14) to (19);
\draw[edge] (15) to (19);
\draw[edge] (16) to (19);
\draw[edge] (17) to (19);
\draw[edge] (18) to (19);

\draw[edge] (-10,2) to (-6,2);
\draw[edge] (-10,2) to (-10,4);
\draw[edge] (-10,4) to (-6,4);
\draw[edge] (-6,2) to (-6,4);

\node (20) [above=of 1,xshift=-7cm,yshift=2cm] {$(\Gamma_1)_1$};

\node (21) [below=of 1,xshift=-7cm,yshift=-1cm] {$X_2$};
\node (25) [above=of 21] {$1$};

\draw[edge] [dashed] (-9,-1.5) to (-7,-1.5);
\draw[edge] [dashed] (-9,-2.5) to (-9,-1.5);
\draw[edge] [dashed] (-9,-2.5) to (-7,-2.5);
\draw[edge] [dashed] (-7,-1.5) to (-7,-2.5);

\node (22) [below=of 1,xshift=-7.75cm,yshift=-0.63cm] {};
\node (23) [right=of 22,xshift=-0.25cm] {};
\node (24) [right=of 23,xshift=-0.25cm] {};

\draw[edge] [dashed] (22) to (12);
\draw[edge] [dashed] (23) to  (12);
\draw[edge] [dashed] (23) to  (13);
\draw[edge] [dashed] (24) to  (13);

\draw (-5,0) -- (-3,0); 

\end{tikzpicture}
}
\caption{The non-internally connected part of $(\Gamma_1)_1\circ_1 X_2$ is given by $\delta_1 (\Gamma_1)_1\wedge X_2$.}
\label{Fig:nonconn1}
\end{figure}
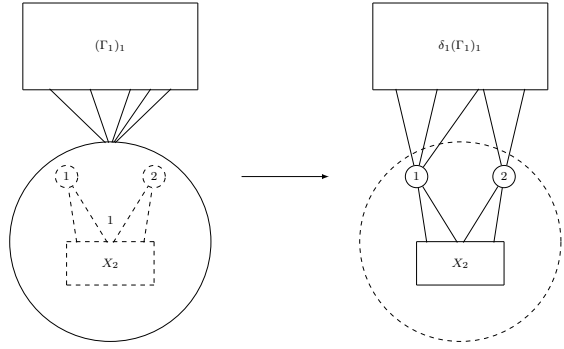
\end{center}

\begin {center}
\begin{figure}[ht]
\resizebox{!}{4.5cm}{
\begin {tikzpicture}[-latex, auto ,node distance =1cm and 1cm ,on grid ,
semithick];

\tikzstyle{vertex1}=[circle,minimum size=2mm,draw=black,fill=black];
\tikzstyle{vertex2}=[circle,minimum size=2mm,draw=black,fill=white];
\tikzset{edge/.style = {-}};


\node[vertex2] (1) {$1$};
\node[vertex2] (2) [right=of 1,xshift=2cm] {$2$};

\node (3) [above=of 1,xshift=-0.5cm,yshift=1.13cm] {};
\node (4) [right=of 3] {};
\node (5) [right=of 4] {};
\node (6) [right=of 5] {};

\draw[edge] (3) to (1);
\draw[edge] (5) to (1);
\draw[edge] (6) to (2);
\draw[edge] (5) to (2);

\draw[edge] (-1,2) to (3,2);
\draw[edge] (-1,2) to (-1,4);
\draw[edge] (-1,4) to (3,4);
\draw[edge] (3,2) to (3,4);

\node (7) [above=of 1,xshift=1cm,yshift=2cm] {$X_2$};

\draw[edge] (-1,-1.5) to (1,-1.5);
\draw[edge] (-1,-2.5) to (-1,-1.5);
\draw[edge] (-1,-2.5) to (1,-2.5);
\draw[edge] (1,-1.5) to (1,-2.5);

\node (8) [below=of 1,xshift=-0.8cm,yshift=-0.63cm] {};
\node (9) [right=of 8,xshift=-0.6cm] {};
\node (10) [right=of 8,xshift=-0.2cm] {};
\node (11) [right=of 8,xshift=0.2cm] {};
\node (12) [right=of 8,xshift=0.6cm] {};

\draw[edge] (8) to  (1);
\draw[edge] (9) to  (1);
\draw[edge] (10) to  (1);
\draw[edge] (11) to  (1);
\draw[edge] (12) to  (1);

\node (13) [below=of 1,yshift=-1cm] {$\delta_0(\Gamma_1)_1$};

\draw[dashed] (0,-1.5) circle (2.3cm);


\node[vertex2] [dashed] (14) [left=of 1,xshift=-8cm] {$1$};
\node[vertex2] (15) [right=of 14,xshift=2cm] {$2$};

\node (16) [above=of 14,xshift=-0.5cm,yshift=1.13cm] {};
\node (17) [right=of 16] {};
\node (18) [right=of 17] {};
\node (19) [right=of 18] {};

\draw[edge] (18) to (15);
\draw[edge] (19) to (15);

\draw[edge] (-10,2) to (-6,2);
\draw[edge] (-10,2) to (-10,4);
\draw[edge] (-10,4) to (-6,4);
\draw[edge] (-6,2) to (-6,4);

\node (20) [above=of 14,xshift=1cm,yshift=2cm] {$X_2$};

\node (21) [above=of 14,yshift=-0.33cm] {};

\draw[edge] (16) to (21);
\draw[edge] (18) to (21);

\draw[edge] [dashed] (-10,-1.5) to (-8,-1.5);
\draw[edge] [dashed] (-10,-2.5) to (-10,-1.5);
\draw[edge] [dashed] (-10,-2.5) to (-8,-2.5);
\draw[edge] [dashed] (-8,-1.5) to (-8,-2.5);

\node (22) [below=of 14,xshift=-0.8cm,yshift=-0.63cm] {};
\node (23) [right=of 22,xshift=-0.6cm] {};
\node (24) [right=of 22,xshift=-0.2cm] {};
\node (25) [right=of 22,xshift=0.2cm] {};
\node (26) [right=of 22,xshift=0.6cm] {};

\draw[edge] [dashed] (22) to  (14);
\draw[edge] [dashed] (23) to  (14);
\draw[edge] [dashed] (24) to  (14);
\draw[edge] [dashed] (25) to  (14);
\draw[edge] [dashed] (26) to  (14);

\node (27) [below=of 14,yshift=-1cm] {$(\Gamma_1)_1$};

\node (28) [below=of 27] {$1$};

\draw (-9,-1.5) circle (2.3cm);

\draw (-5,0) -- (-3,0);

\end{tikzpicture}
}
\caption{The non-internally connected part of $X_2\circ_1 (\Gamma_1)_1$ is given by $X_2\wedge\delta_0 (\Gamma_1)_1$.}
\label{Fig:nonconn2}
\end{figure}
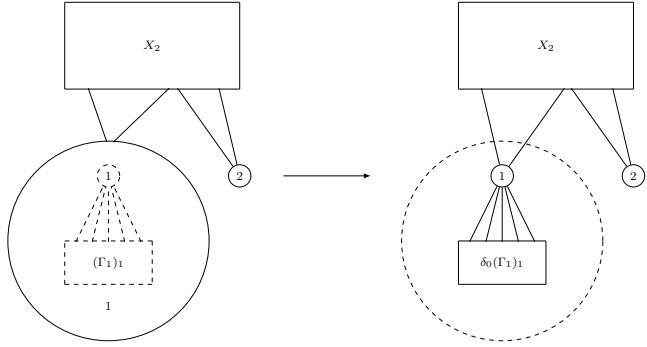
\end{center}

The non-internally connected part of $d(X_1\wedge X_2)$ is 
\begin{equation*}
(dX_1)\wedge X_2 - X_1\wedge (dX_2).
\end{equation*}
As $dX_i=\delta (\Gamma_i)_1 \mod k+1\text{ internal loops}$ the non-internally connected part of
\begin{equation*}
X:=\Gamma_1\bullet X_2 - \Gamma_2 \bullet X_1+d (X_1\wedge X_2)\mod k+1 \text{ internal loops}
\end{equation*}
vanishes, i.e.
\begin{equation}
\underbrace{-(\delta (\Gamma_1)_1)\wedge X_2}_{\text{from }\Gamma_1\bullet X_2}+ \underbrace{(\delta (\Gamma_2)_1)\wedge X_1}_{\text{from }\Gamma_2\bullet X_1}+ \underbrace{(dX_1)\wedge X_2 - (-1)^{k_1(k_2+1)}(dX_2)\wedge X_1}_{\text{from } d(X_1\wedge X_2)}=0
\end{equation} 
}
\item{$Y\in \icg(1)$: The only non-internally connected part of 
\begin{equation*}
\Gamma_1\bullet (\Gamma_2)_1=(\Gamma_1)_1\cdot (\Gamma_2)_1+\underbrace{\sum\limits_{v}{(\Gamma_2)_1\circ_v \Gamma_1}}_{\in \icg(1)}
\end{equation*}
is given by $(\Gamma_1)_1\wedge (\Gamma_2)_1+(\Gamma_2)_1\wedge (\Gamma_1)_1$. Therefore, in $Y$, the only non-internally connected part will be $(\Gamma_1)_1\wedge (\Gamma_2)_1+ (\Gamma_2)_1\wedge (\Gamma_1)_1-((\Gamma_2)_1\wedge (\Gamma_1)_1+(\Gamma_1)_1\wedge (\Gamma_2)_1)=0$.
}
\end{enumerate}

Hence, the conditions for $[x_1,x_2]_{Ih}\in \hkrvk_2$ are satisfied.

\end{proof}



\begin{definition}
The \emph{Grothendieck-Teichm\"uller Lie algebra} $\grt_1$ is spanned by elements $(0,\psi)\in \mathfrak{tder}_2$, that satisfy the following relations:
\begin{align*}
\psi(x,y)&=-\psi(y,x)\\
\psi(x,y)+\psi(y,z)+\psi(z,x)&=0 \text{ for } x+y+z=0\\
\psi(t^{1,2},t^{2,3}+t^{2,4})+\psi(t^{1,3}+t^{2,3},t^{3,4})&=\psi(t^{2,3},t^{3,4})+\psi(t^{1,2}+t^{1,3},t^{2,4}+t^{3,4})+\psi(t^{1,2},t^{2,3})
\end{align*}
where the last equation takes values in the Lie algebra $\mathfrak{t}_4$.
\end{definition}

\begin{theorem}\label{Thm:inclusion}
The Lie algebra $\grt_1$ is contained in all of the $\hkrvk_2$.
\end{theorem}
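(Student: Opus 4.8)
The plan is to derive this inclusion from T.~Willwacher's isomorphism $\grt_1\cong H^0(\GC_2)$ of \cite{Willwacher2014}, combined with the key feature---already emphasised in the introduction---that the construction realising this isomorphism produces a solution of equation \eqref{eq:TOP} that is valid in \emph{every} internal loop order. Concretely, fix an arbitrary $\psi\in\grt_1$. By Remark \ref{remark:irreducible} its class in $H^0(\GC_2)$ may be represented by a $1$-vertex irreducible cocycle $\Gamma\in\GC_2$ of degree $0$ with $d\Gamma=0$, and for such $\Gamma$ the image $(\Gamma)_1$ lies in $\icg(1)$.

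First I would set $Y:=(\Gamma)_1$ and verify that $\delta Y$ is a $d$-cocycle of degree $1$ in $\icg(2)$. Using equation \eqref{eq:F} and $d\Gamma=0$ one has $dY=d(\Gamma)_1=F(\Gamma)$, which lies in $C$; since $C=\ker(\delta)$ by Lemma \ref{lemma:kerdelta}, it follows that $d\,\delta Y=-\delta\, dY=-\delta F(\Gamma)=0$. A short degree count confirms the degree: a degree-$0$ graph cocycle with $N$ vertices has $2N-2$ edges, so $(\Gamma)_1$ has $\icg$-degree $1-(2N-2)+2(N-1)=1$, and $\delta$ preserves this degree.

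Next I would produce $X$. Since $\frakt_2\cong H(\icg(2),d)$ is concentrated in degree $0$, we have $H^1(\icg(2),d)=0$, so the degree-$1$ cocycle $\delta Y$ is exact and there is an $X\in\icg(2)$ of degree $0$ with $dX=\delta Y$, this identity holding in all loop orders. The remaining point is to recognise the tree part $[X]$ as $\psi$: this is precisely the content of \cite{Willwacher2014}, where the isomorphism $H^0(\GC_2)\to\grt_1$ is realised exactly by sending $\Gamma$ to the (well-defined) tree part of a solution of $dX=\delta(\Gamma)_1$. Hence $[X]=\psi$.

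To conclude, note that because $dX=\delta Y$ holds for \emph{all} loop orders it holds in particular modulo $k+1$ internal loops for every $k\geq1$; together with $\deg X=0$ and $[X]=\psi$ this exhibits $\psi$ as an element of $\hkrvk_2$ for each $k$. As $\psi$ was arbitrary, $\grt_1\subset\hkrvk_2$ for all $k$. The main obstacle is the last identification $[X]=\psi$: the mere existence of $X$ is cheap, relying only on $H^1(\icg(2),d)=0$, but the fact that its tree part is the prescribed Grothendieck--Teichm\"uller element is the substantial input, which I would import from \cite{Willwacher2014} rather than reprove here.
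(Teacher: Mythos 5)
Your proposal is correct and follows essentially the same route as the paper: both reduce to Willwacher's isomorphism $\grt_1\cong H^0(\GC_2)$ and the algorithm realising it, which produces a pair $(\gamma_2,\gamma_1)$ with $d\gamma_2=\delta\gamma_1$ in all loop orders and with prescribed tree part. Your additional justification that $\delta(\Gamma)_1$ is an exact degree-$1$ cocycle (via $dY=F(\Gamma)\in C=\ker\delta$ and $H^1(\icg(2),d)=0$) is a correct elaboration of the step the paper's algorithm states as ``it turns out that $\gamma_2'$ is a coboundary.''
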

\begin{proof}
In \cite{Willwacher2014} it was proven that $H^0(\GC_2)\cong \grt_1 $. The map 
\begin{align*}
H^0(\GC_2)&\rightarrow \grt_1\\
\gamma &\mapsto \phi_\gamma
\end{align*}
is given by the following algorithm \cite{Willwacher2014}. 
\begin{enumerate}
\item{Let $\gamma$ be a closed element in $\GC_2$. We may assume it to be 1-vertex irreducible. Denote by $\gamma_1\in \graphs(1)$ the linear combination of graphs obtained by marking the vertex $1$ as ``external" in each graph appearing in $\gamma$. As $\gamma$ is 1-vertex irreducible, $\gamma_1 \in \icg(1)$.}
\item{Apply $\delta$ to $\gamma_1$, i.e. split the external vertex into two vertices, and sum over all ways to reconnect the loose edges so that both vertices are hit by at least one edge. Call this linear combination $\gamma_2'\in \icg(2)$.}
\item{It turns out that $\gamma_2'$ is the coboundary of some element $\gamma_2\in \icg(2)$. We choose $\gamma_2$ to be symmetric under interchange of the external vertices $1$ and $2$.}
\item{Forget the non-internal trivalent tree part of $\gamma_2$ to obtain $T_2\in \sder_2$.}
\item{For each tree $t$ appearing in $T_2$ construct a Lie word in formal variables $X$ and $Y$ as follows. For each edge incident to vertex $1$, cut it and make it the ``root" edge. The resulting tree is a binary tree with leafs labelled by 1 or 2. It can be seen as a Lie tree, and one gets a Lie word $\phi_1(X,Y)$ by replacing each 1 by $X$ and 2 by $Y$. Set $\phi(X,Y)=\phi_1(X,Y)-\phi_1(Y,X)$. Summing over all such Lie words one gets a linear combination $\phi_\gamma(X,Y)$ of Lie words corresponding to $\gamma$. It is an element of $\grt_1$.}
\end{enumerate}
The algorithm and the fact that this map is an isomorphism imply that given $\phi_\gamma\in \grt_1$, there exists a unique internal trivalent tree $T_2 \in \sder_2$ which may be extended to $\gamma_2\in \icg(2)$ satisfying that there is a $\gamma_1\in \icg(1)$ with $d\gamma_2=\delta\gamma_1$. This is exactly the required relation for $T_2$ to be in $\hkrvk_2$ for all $k\geq 1$. Hence, $\grt_1\subset \hkrvk_2$ for all $k\geq 1$.
\end{proof}

\begin{theorem}\label{thm:intersection}
The intersection of all $\hkrv_2^{(k)}$ is $\grt_1\oplus \frakt_2$, i.e. in formulas
\begin{equation*}
\grt_1\oplus \frakt_2\cong \bigcap\limits_{k\geq 1}\hkrv_2^{(k)}=:\hkrv_2^{(\infty)}.
\end{equation*}
\end{theorem}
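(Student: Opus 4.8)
The plan is to prove the two inclusions $\grt_1\oplus\frakt_2\subseteq\hkrv_2^{(\infty)}$ and $\hkrv_2^{(\infty)}\subseteq\grt_1\oplus\frakt_2$ separately. For the first, I would invoke Theorem \ref{Thm:inclusion}, which already gives $\grt_1\subseteq\hkrvk_2$ for every $k$ and hence $\grt_1\subseteq\hkrv_2^{(\infty)}$. For the factor $\frakt_2$, I would observe that any $X$ solving $dX=0\mod k+1$ loops also solves $dX=\delta Y\mod k+1$ loops with $Y=0$, so that $\krvk_2\subseteq\hkrvk_2$ for all $k$; intersecting over $k$ and using the identification $\bigcap_{k\geq1}\krvk_2\cong\frakt_2$ established in the previous section yields $\frakt_2\subseteq\hkrv_2^{(\infty)}$. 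Since $\grt_1$ is concentrated in (Lie) degrees $\geq 3$ while $\frakt_2=\mathbb{K}t$ sits in degree $1$, the two intersect trivially and the sum is direct.

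The heart of the proof is the reverse inclusion. Fix $x\in\hkrv_2^{(\infty)}$ and fix $k\geq 1$. As $x\in\hkrvk_2$, Lemma \ref{lemma:crucial} produces $X_k\in\icg(2)$ and a closed, $1$-vertex irreducible $\Gamma_k\in\GC_2$ of degree $0$ with $[X_k]=x$ and $dX_k=\delta(\Gamma_k)_1\mod k+1$ loops. Applying the algorithm of Theorem \ref{Thm:inclusion} to the cocycle $\Gamma_k$, I obtain $\gamma_2^{(k)}\in\icg(2)$ with $d\gamma_2^{(k)}=\delta(\Gamma_k)_1$ whose tree part $[\gamma_2^{(k)}]=\phi_{\Gamma_k}$ lies in $\grt_1$. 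The difference then satisfies $d(X_k-\gamma_2^{(k)})=0\mod k+1$ loops and has tree part $x-\phi_{\Gamma_k}$, so by the very definition of $\krvk_2$ we get $x-\phi_{\Gamma_k}\in\krvk_2$. Consequently $x=\phi_{\Gamma_k}+(x-\phi_{\Gamma_k})\in\grt_1+\krvk_2$ for every $k$, that is,
\begin{equation*}
x\in\bigcap_{k\geq1}\left(\grt_1+\krvk_2\right).
\end{equation*}

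It remains to identify this intersection, and here I would work one Lie degree at a time. In each fixed degree $d$ the space $\sder_2$ is finite-dimensional, so the descending chain of subspaces $\{(\krvk_2)^{(d)}\}_{k}$ stabilises, necessarily to $\bigcap_k(\krvk_2)^{(d)}=(\frakt_2)^{(d)}$. Adding the fixed finite-dimensional subspace $(\grt_1)^{(d)}$ preserves stabilisation, whence $\bigcap_k\big((\grt_1+\krvk_2)^{(d)}\big)=(\grt_1)^{(d)}+(\frakt_2)^{(d)}$. Summing over $d$ gives $\bigcap_{k\geq1}(\grt_1+\krvk_2)=\grt_1\oplus\frakt_2$, and therefore $x\in\grt_1\oplus\frakt_2$. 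Combined with the first inclusion this yields the claimed vector space isomorphism; it is one of Lie algebras because $\frakt_2=\mathbb{K}t$ is central in $\sder_2$, so the bracket respects the direct sum decomposition.

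I expect the delicate point to be the passage from solutions ``mod $k+1$ loops'' for all $k$ to a statement about the intersection. The subtlety is that the auxiliary data $X_k$, $\Gamma_k$ and hence $\phi_{\Gamma_k}$ are allowed to vary with $k$; the argument above deliberately avoids building a single global solution, retaining only the membership $x\in\grt_1+\krvk_2$ and then commuting the intersection past the sum by degreewise finite-dimensionality. Checking that this commutation is legitimate — that the descending chains genuinely stabilise in each degree and that no completion issues interfere with the grading — is the step I would treat most carefully.
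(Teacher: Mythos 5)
Your argument is correct, but it follows a genuinely different route from the paper's. The paper first upgrades the family of mod-$(k+1)$-loop solutions to a \emph{single} exact pair $(X,Y)$ with $dX=\delta Y$ to all loop orders (Lemma \ref{lemma:technical}, using the auxiliary grading in which the loop order is bounded by the degree), then packages the obstruction as a well-defined map $B:\hkrv_2^{(\infty)}\rightarrow H^2(C,d)\cong H^0(\GC_2)$, $(X,Y)\mapsto dY$, shows $B$ is surjective (split by the algorithm $A$ of Theorem \ref{Thm:inclusion}) and computes $\ker B=\frakt_2$ via the vanishing $H^1(\icg(1),d)=0$; the direct sum then falls out of the resulting extension. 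You instead never build a global solution: for each $k$ separately you combine Lemma \ref{lemma:crucial} with the $\grt_1$-algorithm to split $x=\phi_{\Gamma_k}+(x-\phi_{\Gamma_k})$ with $\phi_{\Gamma_k}\in\grt_1$ and $x-\phi_{\Gamma_k}\in\krvk_2$, and then invoke $\bigcap_k\krvk_2\cong\frakt_2$ together with degreewise stabilisation of descending chains. Both routes rest on the same two pillars --- Lemma \ref{lemma:crucial} and Willwacher's isomorphism $H^0(\GC_2)\cong\grt_1$ with its explicit algorithm --- and both use degreewise finite-dimensionality somewhere (the paper inside Lemma \ref{lemma:technical}, you in the stabilisation step). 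Your version is more economical, avoiding the well-definedness of $B$ and Lemma \ref{lemma:technical} altogether, but it leans on the convergence statement $\bigcap_k\krvk_2\cong\frakt_2$ from the earlier remark, whereas the paper's construction of $B$ and its kernel is of independent interest (it identifies $\hkrv_2^{(\infty)}$ as an explicit split extension). Two points you rightly flag as delicate and should spell out: that $\grt_1$ and each $\krvk_2$ are graded subspaces for a \emph{common} weight grading on $\sder_2$ with finite-dimensional pieces (the auxiliary grading of Lemma \ref{lemma:technical}, which $d$ and $\delta$ preserve, does this for $\krvk_2$), and that the final identification is a priori one of graded vector spaces --- the Lie-algebra statement about the directness of $\grt_1\oplus\frakt_2$ requires the (standard, but unproved here as in the paper) compatibility of $t^{1,2}$ with the bracket.
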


We will need two rather technical lemmas.

\begin{lemma}\label{lemma:technical}
For each $x\in \hkrv^{(\infty)}$, there exists a pair $(X,Y)\in \icg(2)\times \icg(1)$ with $\deg(X)=0$ such that the tree part of $X$ is $x$ and $dX=\delta Y$.
\end{lemma}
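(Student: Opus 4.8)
The plan is to push the whole problem onto the transferred complex $(H^{\bullet}(\hgr\icg(n),d_0),\nabla)$, in which the loop-preserving piece $d_0$ of the differential has been killed, to solve the equation there by induction on the loop order, and to transport a genuine solution back to $\icg$ along the quasi-isomorphism $i$.

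First I would transport the hypothesis to the transferred picture. By Remark~\ref{remark:composition} and Lemma~\ref{lemma:commutative} the maps $i,p$ and the homotopy $h$ satisfy $i\nabla=di$, $\nabla p=pd$, $ph=0$, $h\delta=\delta h$ and $pi=\id$, while $\delta'=p\delta i$; moreover $p$ preserves the loop filtration, since none of its constituents lowers the loop number. Combining these with $d\delta=-\delta d$ one checks the two exact identities
\[
p\,\delta\,Y=\delta'(pY),\qquad i\,\delta'\,\overline Y=\delta(i\overline Y),
\]
the first because $p(dh+hd)\delta=0$ and the second because $(dh+hd)\delta i\overline Y=-\delta(\id-ip)i\overline Y=0$. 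Applying $p$ to the defining relation $dX=\delta Y \bmod k+1$ of an element of $\hkrvk_2$ then gives $\nabla(pX)=\delta'(pY)\bmod k+1$, with $pX$ of degree $0$ and loop-$0$ component the class of $x$. Conversely, an exact transferred solution $\nabla\overline X=\delta'\overline Y$ yields, via $X:=i\overline X$ and $Y:=i\overline Y$, a genuine solution $dX=\delta Y$ of degree $0$ with tree part $x$. Since $x\in\hkrv^{(\infty)}$ lies in every $\hkrvk_2$, it now suffices to produce one exact solution in the transferred complex.

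Next I would solve the transferred equation order by order. Writing $\overline X=\sum_{j\ge0}\overline x_j$ and $\overline Y=\sum_{m\ge0}\overline y_m$ by loop order, and using that $\nabla=\nabla_1+\nabla_2+\cdots$ strictly raises the loop number while $\delta'$ preserves it, the equation $\nabla\overline X=\delta'\overline Y$ becomes the system
\[
\sum_{i=1}^{m}\nabla_i\,\overline x_{m-i}=\delta'\,\overline y_m\qquad(m\ge0),
\]
with $\overline x_0$ prescribed to be the class of $x$. Because $d_0=0$ in the transferred complex, the only obstruction at stage $m$ is membership of $\sum_{i\ge1}\nabla_i\overline x_{m-i}$ in $\im\delta'$, and the freedom available is the newly appearing variable $\overline x_{m-1}$ together with $\overline y_m$. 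Let $A_m$ denote the affine set of truncations $(\overline x_0,\dots,\overline x_{m-1};\overline y_0,\dots,\overline y_m)$ solving the first $m$ equations with $\overline x_0$ the class of $x$; it is the solution set of an inhomogeneous linear system inside a finite-dimensional space, as $H^{\bullet}(\hgr\icg(n),d_0)$ is finite-dimensional in each fixed loop order and degree. The transported hypothesis shows every $A_m$ is nonempty, and forgetting the top variables defines restriction maps $A_{m+1}\to A_m$. By the Mittag-Leffler property for inverse systems of nonempty finite-dimensional affine spaces, $\varprojlim_m A_m\neq\emptyset$, and any element of it is an exact transferred solution $(\overline X,\overline Y)$, which we transport back along $i$ as above.

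The hard part is exactly this passage from solvability at every finite loop order to exact solvability. The existence of approximate solutions to all orders is what makes each $A_m$ nonempty, but nonemptiness of the limit additionally requires the images $\im(A_{m'}\to A_m)$ to stabilise as $m'\to\infty$; this is where the finite-dimensionality of $H^{\bullet}(\hgr\icg(n),d_0)$ in each bidegree is indispensable. A secondary point demanding care is the reduction step, where one must verify that $p$ and $i$ genuinely intertwine $(d,\delta)$ with $(\nabla,\delta')$ up to the filtration; the identities $ph=0$, $h\delta=\delta h$ and $d\delta=-\delta d$ are precisely what guarantee that the $\delta$-terms transform correctly.
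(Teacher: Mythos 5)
Your reduction to the transferred complex is sound (the identities $p\,\delta=\delta'p$ and $i\,\delta'=\delta i$ do follow from Lemma \ref{lemma:commutative} together with $ph=hi=0$), but the argument breaks at the step you yourself identify as the hard part. The claim that $H^{\bullet}(\hgr\icg(n),d_0)$ is finite-dimensional in each fixed loop order and cohomological degree is false: already the loop-order-$0$, degree-$0$ piece is $E_1^{0,0}\cong\sder_2$, which is infinite-dimensional (it is graded by the length of Lie words, with unboundedly many trivalent trees contributing). Consequently each $A_m$ is an affine subspace of an infinite-dimensional space, and the Mittag--Leffler conclusion has no justification there: a decreasing chain of nonempty affine subspaces of an infinite-dimensional vector space can have empty intersection (e.g.\ $A_m=\{x: x_1=\dots=x_m=1\}$ in the space of finitely supported sequences). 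Since surjectivity of the restriction maps $A_{m+1}\to A_m$ is essentially the content of the paper's open Conjecture, you cannot appeal to that either; as written, the passage from solvability at every finite loop order to exact solvability is a genuine gap.

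The missing idea is an auxiliary grading that makes the whole inverse system terminate. The paper joins the consecutive external vertices by edges and grades by the total (not necessarily internal) loop number $M$ of the resulting graph; equivalently, one may grade by $\#\text{edges}-\#\text{internal vertices}$. This grading is preserved by $d$, by $\delta$, and by the homotopies, and in auxiliary degree $M$ the number of internal loops of any graph is bounded by $M$. Hence for the degree-$M$ component $x^M$ of $x$, a pair solving $dX^M=\delta Y^M \bmod M+1$ internal loops (which exists because $x\in\hkrv_2^{(M)}$) already solves the equation exactly, and summing over $M$ finishes the proof with no limit argument at all. Your scheme could be repaired by decomposing every $A_m$ along this auxiliary grading: each graded piece is then finite-dimensional and the system stabilizes in each piece for trivial reasons; but without identifying such a grading the Mittag--Leffler step does not go through.
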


\begin{proof}
We define the following auxiliary grading on $\bigoplus\limits_{r\geq 1}\icg(r)$. It is given by connecting the subsequent external vertices by an edge, and then counting the number of not necessarily internal loops in our graph. A brief graphical calculation shows that this degree is preserved by both $\delta$ and $d$. Let now $x\in \hkrv_2^{(\infty)}$, and denote by $x^M$ its (auxiliary) degree $M$ component. Since $x\in \hkrv_2^{(\infty)}$, in particular $x\in \hkrv_2^{(M)}$, and there is a pair $(X^M,Y^M)$ of degree $M$ extending $x^M$ such that $dX^M=\delta Y^M \mod M+1$ internal loops. But then $(X^M,Y^M)$ is an extension for $x^M$ which satisfies $dX^M=\delta Y^M$ to infinite loop order, since the number of internal loops is bounded by the degree $M$. Applying this construction to each homogeneous component of $x$ gives a pair $(X=\sum\limits_M X^M,Y=\sum\limits_M Y^M)$ satisfying all the required properties.
\end{proof}

\begin{lemma}
Let $(X,Y)$ be a pair corresponding to $x\in \hkrv_2^{(\infty)}$. The map
\begin{align*}
B:\hkrv_2^{(\infty)}&\rightarrow H^2(C,d)\\
(X,Y)&\mapsto dY.
\end{align*}
is well-defined. Here, $(C,d)$ is the complex defined in Remark \ref{remark:C}, 
\end{lemma}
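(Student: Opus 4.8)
The plan is to check two things: that $dY$ is always a degree-$2$ cocycle lying in the subcomplex $C$, so that $[dY]\in H^2(C,d)$ is meaningful; and that the resulting class is insensitive to the choice of pair $(X,Y)$ representing $x$.

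The first point is a short degree count. Since $\deg(X)=0$ and $d$ raises degree by one we have $\deg(dX)=1$, and because each $\delta_j$ neither adds edges nor internal vertices the map $\delta$ preserves the homological degree; hence $dX=\delta Y$ forces $\deg(Y)=1$ and $\deg(dY)=2$. Applying $d$ to $dX=\delta Y$ and using $d^2=0$ together with the anticommutation $d\delta=-\delta d$ already exploited in Lemma~\ref{lemma:crucial} gives $\delta(dY)=-d(dX)=0$, so $dY\in\ker(\delta)=C$ by Lemma~\ref{lemma:kerdelta}. As $d(dY)=0$ automatically, $dY$ is a degree-$2$ cocycle of $(C,d)$ and $[dY]\in H^2(C,d)$ is defined.

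For independence it suffices, by linearity of the defining equation, to show that if $(X,Y)$ corresponds to $x=0$ --- i.e. $X$ has vanishing tree part and $dX=\delta Y$ --- then $[dY]=0$; applying this to the difference of two pairs attached to a given $x$ (which again satisfies $dX=\delta Y$ exactly, with vanishing tree part) then yields the claim. So assume $X$ has no tree part. Running the construction of Lemma~\ref{lemma:crucial} (valid to each finite loop order, and hence to all orders, since the number of internal loops is bounded by the auxiliary degree used in Lemma~\ref{lemma:technical}) produces a closed, degree-$0$, $1$-vertex-irreducible $\Gamma\in\GC_2$ and a modified $X'$ with the same (vanishing) tree part satisfying $dX'=\delta(\Gamma)_1$, for which $[dY]=[F(\Gamma)]=F_*[\Gamma]$ in $H^2(C,d)$, where $F_*$ denotes the isomorphism $H^0(\GC_2)\cong H^2(C,d)$ of Remark~\ref{remark:C}. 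I would then feed $\Gamma$ into the algorithm of Theorem~\ref{Thm:inclusion}: the lift of $\delta(\Gamma)_1$ required in step $3$ may be taken to be $X'$, so the internally trivalent tree part extracted in step $4$ is $0$, the associated Lie element $\phi_\Gamma$ vanishes, and hence $[\Gamma]=0$ by the Willwacher isomorphism $H^0(\GC_2)\cong\grt_1$. Therefore $[dY]=F_*[\Gamma]=0$.

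The main obstacle is the compatibility with the algorithm of Theorem~\ref{Thm:inclusion}, which fixes a lift symmetric under interchanging the two external vertices, whereas the lift $X'$ produced above need not be symmetric. I would resolve this by verifying that the output of the algorithm depends only on the internally trivalent tree part of any admissible lift: two admissible lifts differ by a degree-$0$ $d$-cocycle of $\icg(2)$, whose tree part is a closed internally trivalent tree, and adding such a tree leaves the Lie word of step $5$ unchanged. This closed-tree ambiguity is precisely the source of the $\frakt_2$-summand appearing in Theorem~\ref{thm:intersection}, so keeping careful track of it --- rather than the routine degree bookkeeping --- is where the real work lies.
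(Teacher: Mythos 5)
Your first paragraph (checking that $dY$ is a degree-$2$ cocycle of the subcomplex $(C,d)$) coincides with the paper's. For the independence of the choice of pair, however, you take a genuinely different route. The paper never invokes $\GC_2$ or the algorithm of Theorem~\ref{Thm:inclusion} at this point: it introduces the auxiliary map $E:(X,Y)\mapsto \delta X\in H^0(\icg(3),d)\cong\frakt_3$, notes that for the difference pair $\delta X$ represents $0$ in $\frakt_3$ because it has no tree part, and then transports this vanishing to $H^2(C,d)$ via the quasi-isomorphism $(C,d)\hookrightarrow(\bigoplus_{r\geq 1}\icg(r)[1],d+\delta)$ together with the identity $dY=\delta X-(d+\delta)(X-Y)$. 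That argument is shorter and sidesteps the lift-choice issue on which your version hinges.

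The gap in your version sits exactly at the step you flag as ``where the real work lies'', and your proposed resolution of it is false. Two admissible lifts of $\delta(\Gamma)_1$ differ by a degree-$0$ $d$-cocycle of $\icg(2)$, whose class in $H^0(\icg(2),d)\cong\frakt_2$ is $\lambda t^{1,2}$ for some $\lambda\in\mathbb{K}$. Adding $\lambda t^{1,2}$ to the lift does \emph{not} leave the Lie word of step~5 unchanged: it changes $T_2$ by $\lambda t^{1,2}$, which under the isomorphism with $\sder_2$ is $\lambda(x_2,x_1)$, so $\phi_1(X,Y)$ changes by $\lambda Y$ and $\phi(X,Y)=\phi_1(X,Y)-\phi_1(Y,X)$ changes by $\lambda(Y-X)\neq 0$. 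This is precisely the $\frakt_2$-ambiguity you mention, and it does not cancel on its own. To close the argument you must separately rule out $\lambda\neq 0$; for instance, the output $\phi_\Gamma$ computed from the symmetric lift lies in $\grt_1$ by Willwacher's theorem and would have to equal $\pm\lambda(Y-X)$, but the third defining relation of $\grt_1$ (checked in $\frakt_4$) has no nonzero solutions in degree $1$, forcing $\lambda=0$; alternatively, a degree-$0$ cocycle of $\GC_2$ has at least $3$ loops, so by homogeneity in the auxiliary grading of Lemma~\ref{lemma:technical} the discrepancy between the two lifts cannot contain the one-loop (in that grading) graph $t^{1,2}$. With such a supplement your argument goes through, but the key claim as written is incorrect.
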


\begin{proof}
We define a map by
\begin{align*}
\hkrv_2^{(\infty)}&\xrightarrow{E} H^0(\icg(3),d)\cong \frakt_3\\
(X,Y)&\mapsto \delta X.
\end{align*}
To show that $E$ is well-defined, first note that $\delta X$ is of degree 0 and that $d \delta X=-\delta dX=-\delta^2 Y=0$, that is indeed $\delta X\in H^0(\icg(3),d)$. Let $(X_1,Y_1)$ and $(X_2,Y_2)$ be two extensions of $x$. The difference $X:=X_1-X_2$ has no tree part. Therefore, $E(X,Y)=\delta X=0$ because elements of $\frakt_3$ consists only of trees (and $\delta X$ contains none). Thus, $E(X_1,Y_1)=\delta X_1=\delta X_2=E(X_2,Y_2)$ and $E$ is well-defined. 

To prove the same for $B$, note that since $dY$ is obviously closed, of degree $2$ (as $\deg(Y)=1$) and satisfies $0=d^2 X=d \delta Y=-\delta dY$, i.e. $dY\in \ker\delta=C$, the target space is indeed $H^2(C,d)$. Again, let $(X_1,Y_1)$ and $(X_2,Y_2)$ be two extensions of $x$ and consider $Y:=Y_1-Y_2$. It follows from (\cite{Willwacher2014}, Proposition 6.13.), that the inclusion $(C,d)\hookrightarrow (\bigoplus\limits_{r\geq 1}\icg(r)[1],d+\delta)$ is a quasi-isomorphism, in particular,
\begin{equation}\label{eq:totalcomplex}
H^2(C,d)\cong H^3(\bigoplus\limits_{r\geq 1}\icg(r),d+\delta).
\end{equation}
The degree in the total complex for some $\Gamma \in \icg(r)$ is $\deg_{Tot}:=\deg(\Gamma)+r$ (where $\deg(\Gamma)$ is the degree in $\icg(r)$). In the total complex $dY$ is cohomologous to $\delta X$ via
\begin{equation*}
dY=\delta X -(d+\delta)(X-Y).
\end{equation*}
Therefore, since $\delta X=0$, we have $d Y=0 \in  H^3(\bigoplus\limits_{r\geq 1}\icg(r),d+\delta)$. But the isomorphism \eqref{eq:totalcomplex} implies that therefore $d Y=0$ already in $H^2(C,d)$. This yields the result, as now $dY_1=dY_2$, that is, $B$ is well-defined.
\end{proof}

\begin{remark}\label{rmk:alpha}
As $H^0(GC_2)\overset{F}{\cong} H^2(C,d)$, $dY=F(\gamma)$ for some $\gamma\in H^0(\GC_2)$. Also, by equation \eqref{eq:F}, $F(\gamma)=d\gamma_1$, where $\gamma_1$ is obtained by marking vertex $1$ as ``external" (see equation \eqref{eq:onemap}). Therefore, $d(\gamma_1-Y)=0$, and since $H^1(\icg(1),d)=0$, $\gamma_1-Y=d\alpha$ for some $\alpha\in \icg(1)$ of degree $0$. We shall use this relation in the proof below.
\end{remark}

\begin{proof}[Proof of Theorem \ref{thm:intersection}]
The algorithm in the proof of Theorem \ref{Thm:inclusion} provides us with a map
\begin{equation*}
A:H^0(\GC_2)\rightarrow \hkrv_2^{(\infty)}.
\end{equation*}
Let $\gamma\in H^0(\GC_2)$, and denote by $\phi_\gamma$ the corresponding $\grt_1$ element. Keeping the notation from the algorithm, the assignment $\gamma\mapsto \phi_\gamma$ produces a pair $(\gamma_2,\gamma_1)$ satisfying $d\gamma_2=\delta\gamma_1$ and thus the tree part of $\gamma_2$, denoted $T_2$, will lie in $\hkrv_2^{(\infty)}$. Abusing notation, set $A(\gamma):=(\gamma_2,\gamma_1)$. Consider the composition
\begin{equation*}
H^0(\GC_2)\overset{A}{\longrightarrow} \hkrv_2^{(\infty)} \overset{B}{\longrightarrow} H^2(C,d)\overset{F^{-1}}{\longrightarrow} H^0(\GC_2).
\end{equation*}
It equals the identity as
\begin{equation*}
F^{-1} \circ B \circ A (\gamma)=F^{-1} \circ B (\gamma_2,\gamma_1)=F^{-1} (d\gamma_1)=\gamma,
\end{equation*}
implying that $B$ is surjective. 

We now determine the kernel of $B$. For this, let $(X,Y)$ be a pair corresponding to $x\in \hkrv_2^{(\infty)}$ with $B(X,Y)=dY=0\in H^2(C,d)$. Then, $F(\gamma)=dY=0$ and since $F$ is an isomorphism $\gamma=0\in H^0(\GC_2)$, i.e. $\gamma=d\tgamma$ for some $\tgamma\in \GC_2$ of degree $-1$. Remark that (by equation \eqref{eq:F})
\begin{equation}\label{eq:Ftgamma}
F(\tgamma)=d\tgamma_1-(d\tgamma)_1=d\tgamma_1-\gamma_1.
\end{equation}
Define $\hgamma:=\gamma_1+ F(\tgamma)\in \icg(1)$. It satisfies,
\begin{equation*}
\delta \hgamma=\delta \gamma_1+\delta F(\tgamma)=\delta \gamma_1
\end{equation*}
as $F(\tgamma)\in C=\ker\delta$. Also, it follows directly from equation \eqref{eq:Ftgamma} that $\hgamma=d\tgamma_1$. Finally, set
\begin{equation*}
X':=X+\delta(\tgamma_1-\alpha)\in \icg(2),
\end{equation*}
where $\alpha\in \icg(1)$ is as in Remark \ref{rmk:alpha}. The degree of $X'$ is $0$ and it satisfies,
\begin{align*}
dX'=&d X+d\delta(\tgamma_1-\alpha)=d X-\delta d \tgamma_1+ \delta d\alpha\\
=&dX-\delta\hgamma+\delta(\gamma_1-Y)\\
=&dX-\delta\gamma_1+\delta\gamma_1-\delta Y=dX-\delta Y=0.
\end{align*}
Hence, $X'\in H^0(\icg(2),d)\cong\frakt_2$, i.e. $X'=\lambda \cdot (\underset{1}{\circ} \text{---} \underset{2}{\circ})$ for some $\lambda \in \mathbb{K}$. But then, 
\begin{equation*}
X=X' - \delta(\tgamma_1 - \alpha)=\lambda \cdot (\underset{1}{\circ} \text{---} \underset{2}{\circ}) - \delta(\tgamma_1  - \alpha).
\end{equation*}
However, $\delta(\tgamma_1-\alpha)$ does not contribute to the tree part $x$ of $X$, which therefore is of the form $\lambda \cdot (\underset{1}{\circ} \text{---} \underset{2}{\circ})$. This implies $x \in \frakt_2$ and $\ker B \subset \frakt_2$. In fact, $\ker B=\frakt_2$. The other inclusion is clear. Since $t^{1,2}$ satisfies $d (t^{1,2})=0$, a pair corresponding to $t^{1,2}$ in $\hkrv_2^{(\infty)}$ is $(t^{1,2},0)$, which lies in $\ker B$. And since $B$ is well-defined, any pair corresponding to $t^{1,2}$ will lie in $\ker B$.

Thus, we eventually have
\begin{equation*}
\hkrv_2^{(\infty)}\Big/ \frakt_2 \overset{\cong}{\longrightarrow}H^0(\GC_2)\cong\grt_1
\end{equation*}
and $\hkrv_2^{(\infty)}\cong \grt_1 \oplus \frakt_2$.
\end{proof}

Since it is conjectured that $\hkrv_2\cong \frakt_2 \oplus \grt_1$, we expect all $\hkrvk_2$ to coincide.
\begin{conjecture}
For all $k\geq 1$
\begin{equation*}
\hkrvk_2=\hkrv_2^{(k+1)}.
\end{equation*}
\end{conjecture}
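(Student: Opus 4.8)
The plan is to attack the conjecture one loop order at a time, by constructing for each $k$ an obstruction map that measures the failure of an element of $\hkrvk_2$ to lie in $\hkrv_2^{(k+1)}$, and then to show this obstruction always vanishes. First I would record that the two subspaces sit in the nested chain $\hkrv_2^{(\infty)}=\grt_1\oplus\frakt_2\subset\hkrv_2^{(k+1)}\subset\hkrvk_2\subset\hkrv_2$, so that the equalities $\hkrvk_2=\hkrv_2^{(k+1)}$ for \emph{all} $k$ are equivalent, by squeezing together with Theorem \ref{thm:intersection}, to the full Kashiwara--Vergne isomorphism $\hkrv_2\cong\grt_1\oplus\frakt_2$. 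A complete proof is therefore out of reach with present techniques; what follows is the natural strategy one would pursue, and it isolates exactly where the difficulty sits.

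Next I would set up the obstruction. Let $x\in\hkrvk_2$ with an extension $(X,Y)\in\icg(2)\times\icg(1)$ satisfying $[X]=x$ and $dX=\delta Y\mod k+1$ loops. By Lemma \ref{lemma:crucial} we may assume $Y=(\Gamma)_1\mod k+1$ loops for a closed, $1$-vertex irreducible $\Gamma\in\GC_2$. As in the proof of Theorem \ref{thm:intersection}, $dY$ is closed and lies in $\ker\delta=C$ (Lemma \ref{lemma:kerdelta}), so its loop-$(k+1)$ part defines a class
\begin{equation*}
B_k(x)\in H^2(C,d)^{(k+1)}\cong H^0(\GC_2,d)^{(k+1)}\cong (\grt_1)^{(k+1)},
\end{equation*}
using Remark \ref{remark:C}. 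I would then verify, by the same extension-independence argument used for the map $B$ in Theorem \ref{thm:intersection}, that $B_k$ descends to a well-defined injection $\hkrvk_2/\hkrv_2^{(k+1)}\hookrightarrow(\grt_1)^{(k+1)}$. Concretely, if $B_k(x)=0$ then the loop-$(k+1)$ discrepancy in $dX-\delta Y$ is a coboundary which, using $H^1(\icg(1),d)=0$, can be absorbed by correcting $X$ and $Y$ by terms of the form $\delta(\,\cdot\,)$ without disturbing the tree part, thereby promoting $dX=\delta Y$ to hold modulo $k+2$ loops and placing $x$ in $\hkrv_2^{(k+1)}$; this gives $\ker B_k=\hkrv_2^{(k+1)}$.

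With $\ker B_k=\hkrv_2^{(k+1)}$ in hand, the conjecture reduces to the vanishing $B_k\equiv 0$. The idea would be that the obstruction of any $x\in\hkrvk_2$ is already realized by a genuine $\grt_1$-element, whose obstruction is zero by Theorem \ref{Thm:inclusion} (such elements extend to all loop orders); subtracting the corresponding $\grt_1$-element would then correct $x$ into $\hkrv_2^{(k+1)}$. This last step is the main obstacle, and it is precisely where the content of the Kashiwara--Vergne conjecture is concentrated: one must decouple the special-derivation (tree) part of $x$ from its higher-loop graph corrections across a single loop order, and no input from the graph complex $\GC_2$ is presently known to force this. Absent such a mechanism, the injection $\hkrvk_2/\hkrv_2^{(k+1)}\hookrightarrow(\grt_1)^{(k+1)}$ cannot be shown to be zero without invoking the very isomorphism to which the conjecture is equivalent.
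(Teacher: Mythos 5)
You have not proved the statement, and indeed no proof is possible here by elementary means: in the paper this is an open \emph{conjecture}, stated with no proof and motivated only by the remark that it would follow from the expected isomorphism $\hkrv_2\cong\grt_1\oplus\frakt_2$. Your proposal is honest about this, and its first observation is correct and is essentially the paper's own justification: since $\grt_1\oplus\frakt_2\subset\hkrv_2^{(\infty)}\subset\hkrv_2^{(k+1)}\subset\hkrvk_2\subset\hkrv_2$ (using Theorem \ref{Thm:inclusion} and the fact that $(t^{1,2},0)$ extends to all loop orders), the conjunction of all the equalities $\hkrvk_2=\hkrv_2^{(k+1)}$ is, via Theorem \ref{thm:intersection}, equivalent to the Kashiwara--Vergne conjecture itself. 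So the genuine gap is exactly the one you name: the vanishing $B_k\equiv 0$ is not an auxiliary lemma but the full content of the conjecture, and nothing in the graph-complex machinery of the paper forces it.

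Two technical caveats on the obstruction setup, should you wish to develop it further. First, the claim that the loop-$(k+1)$ part of $dY$ lies in $C=\ker\delta$ does not follow from the argument in Lemma \ref{lemma:crucial}: writing $dX-\delta Y=O$ with $O$ of loop order $\geq k+1$, the identity $\delta\, dY=dO$ only kills $\delta\, dY$ in loop orders $\leq k$; in loop order $k+1$ one gets $\delta (dY)^{(k+1)}=d_0O^{(k+1)}$, which need not vanish. You would have to normalize $Y$ more carefully (e.g.\ replace it by $F(\Gamma)$-type representatives to all loop orders) before $B_k$ lands in $H^2(C,d)^{(k+1)}$. Second, the identification $\ker B_k=\hkrv_2^{(k+1)}$ requires checking that absorbing the coboundary discrepancy does not disturb the equation in loop orders $\leq k$ while also leaving the tree part fixed; this is plausible via $H^1(\icg(1),d)=0$ and corrections of the form $\delta(\cdot)$, but it is asserted rather than verified. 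Neither caveat changes the conclusion: the proposal is a reasonable reduction, not a proof, and the paper offers none either.
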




\appendix
\section{The spaces $\tr_n$, $\sder_n$, $\tder_n$}
We follow \cite{Alekseev2012}. Fix $n\geq 1$. Let $\lie_n$ denote the completed free Lie algebra over $\mathbb{K}$ on $n$ variables $x_1,\dots,x_n$ and let $\text{Ass}_n=U(\lie_n)$ be the completed free associative algebra in $n$ generators. The graded vector space of \emph{cyclic words} in $n$ variables $\tr_n$ is defined as
\begin{equation*}
\tr_n:=\text{Ass}^+_n/\langle (ab-ba), a,b\in \text{Ass}_n\rangle
\end{equation*}
where $\text{Ass}_n^+$ is the augmentation ideal of $\text{Ass}_n$. The Lie algebra $\tder_n$ of \emph{tangential derivations} on $\lie_n$ is defined as follows. A derivation $u$ on $\lie_n$ is tangential if there exist $a_1,\dots ,a_n\in \lie_n$ such that $u(x_i)=[x_i,a_i]$ for all $i=1,\dots,n$. The action of $u$ on the generators completely determine the derivation. For $u=(a_1,\dots,a_n)$ and $v=(b_1,\dots,b_n)$ elements of $\tder_n$, the Lie bracket is the tangential derivation $[u,v]=(c_1,\dots,c_n)$, where $c_k=u(b_k)-v(a_k)+[a_k,b_k]$ for all $k=1,\dots,n$. The Lie algebra of \emph{special derivations} $\sder_n$ is
\begin{equation*}
\sder_n:=\{u\in \tder_n| u(\sum\limits_{i=1}^{n}{x_i})=0\}.
\end{equation*}
It is a Lie subalgebra of $\tder_n$. For every $a\in \text{Ass}_n$, we have a unique decomposition
\begin{equation*}
a=a_0+\sum\limits_{k=1}^{n}(\partial_k a)x_k,
\end{equation*}
where $a_0\in \mathbb{K}$ and $(\partial_k a)\in \text{Ass}_n$. The \emph{divergence map} 
\begin{align*}
\text{div}:\tder_n&\rightarrow \tr_n\\
u=(a_1,\dots, a_n) &\mapsto \sum\limits_{k=1}^{n}tr(x_k(\partial_k a_k))
\end{align*}
is a cocycle for $\tder_n$ (\cite{Alekseev2012}, Proposition 3.6.). 

The following algorithm describes the isomorphism between $H^0(\hgr\icg(n)^0,d_0)$, i.e. internally trivalent trees in $\icg(n)$ modulo IHX, and $\sder_n$. Let $\Gamma$ be a tree representing an element of $H^0(\hgr\icg(n)^0,d_0)$. Pick an edge incident to the external vertex $1$, cut it and make it the ``root" edge. The resulting tree is a binary tree with leafs labeled by $1,\dots, n$. Repeat this procedure for every edge incident to vertex $1$, and take the sum of the trees obtained in this way. We want to interpret these binary trees as Lie words. The sign convention for this is as follows. The edges of the tree should be ordered such that its ``root" edge comes first, then all edges of its left subtree, and then all edges of its right subtree. For each subtree, apply this convention recursively. The resulting linear combination of Lie words (these can be read off the trees by following the ordering of the edges) in the variables $x_1,\dots,x_n$ corresponds to the first component $a_1$ of a special derivation $a=(a_1,\dots,a_n)\in \sder_n$. The $i$-th component $a_i$ is obtained by applying the same procedure to the $i$-th external vertex.

We now give the map $H^1(\hgr\icg(n)^1,d_0)\hookrightarrow \tr_n$ as described in \cite{Severawillwacher2011}. Let $\overline{\Gamma}\in H^1(\hgr\icg(n)^1,d_0)$. We may assume that the representative $\Gamma$ is such that the loop passes through all internal vertices. Order the edges as in Figure \ref{trgraph}. In this case, we map
\begin{equation*}
\overline{\Gamma}\mapsto tr(x_{m_1}\cdots x_{m_k})-(-1)^k tr(x_{m_k}\cdots x_{m_1}).
\end{equation*}

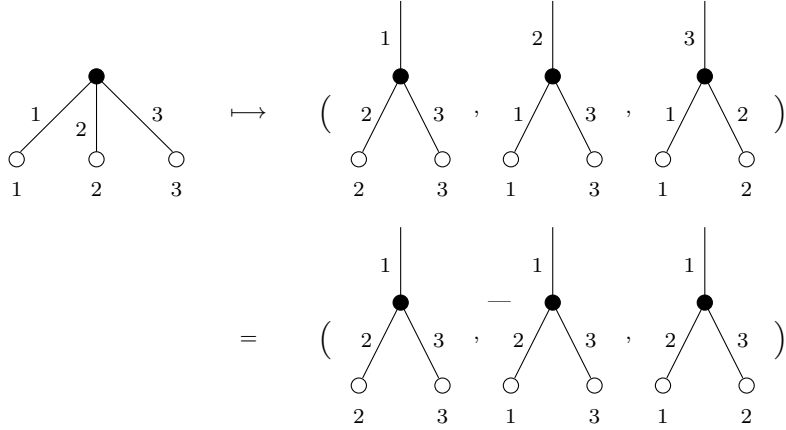
\begin{figure}[h]
\centering
\begin{tikzpicture}

\draw(2,2) -- (1,1);
\draw(2,2) -- (2,1);
\draw(2,2) -- (3,1);
\draw(0.95,0.9) circle (0.1cm);
\draw(2,0.9) circle (0.1cm);
\draw(3.05,0.9) circle (0.1cm);
\draw[black,fill=black](2,2) circle (0.1cm);
\node at (0.95,0.5) {1};
\node at (2,0.5) {2};
\node at (3.05,0.5) {3};
\node at (1.2,1.5) {1};
\node at (1.8,1.3) {2};
\node at (2.8,1.5) {3};


\node at (4,1.5) {$\longmapsto$};

\node at (5,1.5) {$\Big($};
\draw(5.45,0.9) circle (0.1cm);
\draw(6.55,0.9) circle (0.1cm);
\draw[black,fill=black](6,2) circle (0.1cm);
\draw(6,3) -- (6,2);
\draw(6,2) -- (5.5,1);
\draw(6,2) -- (6.5,1);
\node at (5.45,0.5) {2};
\node at (6.55,0.5) {3};
\node at (5.8,2.5) {1};
\node at (5.55,1.5) {2};
\node at (6.5,1.5) {3};

\node at (7,1.5) {,};


\draw(7.45,0.9) circle (0.1cm);
\draw(8.55,0.9) circle (0.1cm);
\draw[black,fill=black](8,2) circle (0.1cm);
\draw(8,3) -- (8,2);
\draw(8,2) -- (7.5,1);
\draw(8,2) -- (8.5,1);
\node at (7.45,0.5) {1};
\node at (8.55,0.5) {3};
\node at (7.8,2.5) {2};
\node at (7.55,1.5) {1};
\node at (8.5,1.5) {3};

\node at (9,1.5) {,};

\draw(9.45,0.9) circle (0.1cm);
\draw(10.55,0.9) circle (0.1cm);
\draw[black,fill=black](10,2) circle (0.1cm);
\draw(10,3) -- (10,2);
\draw(10,2) -- (9.5,1);
\draw(10,2) -- (10.5,1);
\node at (9.45,0.5) {1};
\node at (10.55,0.5) {2};
\node at (9.8,2.5) {3};
\node at (9.55,1.5) {1};
\node at (10.5,1.5) {2};

\node at (11,1.5) {$\Big)$};

\node at (4,-1.5) {$=$};

\node at (5,-1.5) {$\Big($};
\draw(5.45,-2.1) circle (0.1cm);
\draw(6.55,-2.1) circle (0.1cm);
\draw[black,fill=black](6,-1) circle (0.1cm);
\draw(6,0) -- (6,-1);
\draw(6,-1) -- (5.5,-2);
\draw(6,-1) -- (6.5,-2);
\node at (5.45,-2.5) {2};
\node at (6.55,-2.5) {3};
\node at (5.8,-0.5) {1};
\node at (5.55,-1.5) {2};
\node at (6.5,-1.5) {3};

\node at (7,-1.5) {,};


\draw(7.45,-2.1) circle (0.1cm);
\draw(8.55,-2.1) circle (0.1cm);
\draw[black,fill=black](8,-1) circle (0.1cm);
\node at (7.3,-1) {---};
\draw(8,0) -- (8,-1);
\draw(8,-1) -- (7.5,-2);
\draw(8,-1) -- (8.5,-2);
\node at (7.45,-2.5) {1};
\node at (8.55,-2.5) {3};
\node at (7.8,-0.5) {1};
\node at (7.55,-1.5) {2};
\node at (8.5,-1.5) {3};

\node at (9,-1.5) {,};

\draw(9.45,-2.1) circle (0.1cm);
\draw(10.55,-2.1) circle (0.1cm);
\draw[black,fill=black](10,-1) circle (0.1cm);
\draw(10,0) -- (10,-1);
\draw(10,-1) -- (9.5,-2);
\draw(10,-1) -- (10.5,-2);
\node at (9.45,-2.5) {1};
\node at (10.55,-2.5) {2};
\node at (9.8,-0.5) {1};
\node at (9.55,-1.5) {2};
\node at (10.5,-1.5) {3};

\node at (11,-1.5) {$\Big)$};


\end{tikzpicture}
\caption{An example of the isomorphism $H^0(\hgr\icg(3)^0,d_0)\rightarrow \sder_3$. The triple on the right corresponds to the element $([x_2,x_3],-[x_1,x_3],[x_1,x_2])$.}

\end{figure}

\begin{figure}[h]
\centering
\begin{tikzpicture}

\draw[black,fill=black](0,0) circle (0.1cm);
\draw[black,fill=black](1,0) circle (0.1cm);
\draw[black,fill=black](1.5,1) circle (0.1cm);
\draw[black,fill=black](-0.5,1) circle (0.1cm);
\draw[black,fill=black](0.5,1.75) circle (0.1cm);

\draw(0,0) -- (-0.5,1);
\draw(0,0) -- (1,0);
\draw(-0.5,1) -- (0.5,1.75);
\draw(0.5,1.75) -- (1.5,1);
\draw(1.5,1) -- (1,0);

\draw(-1,-1) circle (0.1cm);
\draw(-1.5,1.75) circle (0.1cm);
\draw(2,-1) circle (0.1cm);
\draw(2.5,1.75) circle (0.1cm);
\draw(0.5,2.75) circle (0.1cm);

\draw(0,0) -- (-0.93,-0.93);
\draw(-0.5,1) -- (-1.43,1.68);
\draw(1.5,1) -- (2.43,1.68);
\draw(1,0) -- (1.93,-0.93);
\draw(0.5,1.75) -- (0.5,2.68);

\node at (-0.25,-0.5) {1};
\node at (0.5,-0.25) {2};
\node at (1.25,-0.5) {3};
\node at (1.5,0.5) {4};

\node at (1.2,1.5) {6};
\node at (1.9,1.5) {5};
\node at (-0.2,1.5) {8};
\node at (-0.9,1.5) {9};
\node at (0.75,2.25) {7};
\node at (-0.5,0.5) {10};

\node at (-1.25,-1.25) {$m_1$};
\node at (2.25,-1.25) {$m_2$};
\node at (2.75,2) {$m_3$};
\node at (0.5,3) {$m_4$};
\node at (-1.75,2) {$m_5$};

\end{tikzpicture}
\caption{This graph will be sent to $tr(x_{m_1}x_{m_2}x_{m_3}x_{m_4}x_{m_5})-(-1)^5tr(x_{m_5}x_{m_4}x_{m_3}x_{m_2}x_{m_1})$ under the injective map $H^1(\hgr\icg(n)^1,d_0)\rightarrow \tr_n$.}
\label{trgraph}
\end{figure}
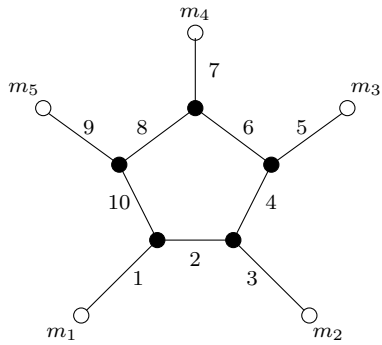

\newpage

\bibliographystyle{plain}

\end{document}